\documentclass[11pt,a4paper]{article}

\usepackage[utf8]{inputenc}
\usepackage[T1]{fontenc}
\usepackage{lmodern}

\usepackage{amsmath,amsfonts,amssymb,amsthm}
\usepackage{mathtools}
\usepackage{accents}

\usepackage{hyperref}
\hypersetup{
    colorlinks=true,
    linkcolor=blue,
    citecolor=red,
    urlcolor=blue,
    pdfborder={0 0 0}
}

\usepackage[a4paper]{geometry}

%\usepackage[colorinlistoftodos]{todonotes}
%usage
%\todo{Here's a comment in the margin!}
% or
%\todo[inline, color=green!40]{This is an inline comment.}
%

%\usepackage[notcite,notref]{showkeys}

\usepackage{cleveref}
  \crefname{section}{Section}{Sections}
  \crefname{figure}{Figure}{Figures}
  \crefname{theorem}{Theorem}{Theorems}
  \crefname{lemma}{Lemma}{Lemmas}
  \crefname{proposition}{Proposition}{Propositions}  
  \crefname{corollary}{Corollary}{Corollaries}
  \crefname{definition}{Definition}{Definitions}
  \crefname{example}{Example}{Examples}
  \crefname{remark}{Remark}{Remarks}
  \crefname{equation}{}{}

\newtheorem{theorem}{Theorem}[section]
\newtheorem{lemma}[theorem]{Lemma}
\newtheorem{proposition}[theorem]{Proposition}
\newtheorem{corollary}[theorem]{Corollary}

\theoremstyle{definition}

\newtheorem{remark}[theorem]{Remark}

\DeclarePairedDelimiter\abs{\lvert}{\rvert}
\DeclarePairedDelimiter\norm{\lVert}{\rVert}

\newcommand*{\defeq}{\mathrel{\mathop:}=}

\newcommand*{\mmiddle}[1]{\mathrel{}\middle#1\mathrel{}}

\renewcommand*{\Re}{\operatorname{Re}}
\renewcommand*{\Im}{\operatorname{Im}}

\newcommand*{\diam}{\operatorname{diam}}
\newcommand*{\dist}{\operatorname{dist}}
\newcommand*{\crad}{\operatorname{crad}}
\newcommand*{\hcap}{\operatorname{hcap}}

\newcommand*{\cF}{\mathcal{F}}

\newcommand*{\bbC}{\mathbf{C}}

\newcommand*{\bbE}{\mathbf{E}}
\newcommand*{\bbH}{\mathbf{H}}
\newcommand*{\bbN}{\mathbf{N}}
\newcommand*{\bbP}{\mathbf{P}}
\newcommand*{\bbR}{\mathbf{R}}

\newcommand*{\barH}{\overline{\bbH}}

\newcommand*{\sle}[1]{SLE$_{#1}$}
\newcommand*{\slek}{\sle{\kappa}}

\newcommand*{\logp}{\log^*}

\title{Refined regularity of SLE}

\author{
Yizheng Yuan\thanks{TU Berlin, Germany \& U Cambridge, United Kingdom.
Email: \texttt{yy547@cam.ac.uk}}
}

\begin{document}

\maketitle

\begin{abstract}
We prove refined (variation and H\"older-type) regularity statements for the SLE trace (under capacity parametrisation). More precisely, we show that the trace has finite $\psi$-variation for $\psi(x) = x^d(\log 1/x)^{-d-\varepsilon}$ and H\"older-type modulus $\varphi(t) = t^\alpha(\log 1/t)^{\beta}$ where $d$ and $\alpha$ are the optimal $p$-variation and H\"older exponents of SLE$_\kappa$ which have been previously identified by Viklund, Lawler (2011) and Friz, Tran (2017). For SLE$_8$, we simplify a step in the proof by Kavvadias, Miller, and Schoug (2021), and get the modulus $\varphi(t) = (\log 1/t)^{-1/4}(\log\log 1/t)^{2+\varepsilon}$.

Finally, for $\kappa \ge 8$, we prove regularity estimates for the uniformising maps that hold uniformly in time, namely $\sup_t |\hat f_t'(u+iv)| \lesssim v^{2\alpha-1}(\log 1/v)^\beta$ in case $\kappa>8$ and $v^{-1}(\log 1/v)^{-1/4}(\log\log 1/v)^{1+\varepsilon}$ in case $\kappa=8$.

Our results are obtained from analysing the forward Loewner differential equation (in contrast to the other mentioned works which analyse the backward equation).
\end{abstract}

\section{Introduction}

Schramm-Loewner evolution (SLE) is a family of random curves that appear naturally in conformally invariant models in the plane. First introduced by O.\@ Schramm to describe the scaling limits of the loop-erased random walk and the uniform spanning tree, they have been shown to appear in the scaling limits of many more models such as the Ising model and Bernoulli percolation. Moreover, they are deeply connected to other conformally invariant objects such as Brownian motion, Gaussian free field, and Liouville quantum gravity.

Regularity of the SLE trace has been studied by many authors, starting from \cite{rs-sle}. From the works \cite{vl-sle-hoelder, bef-sle-dimension, ft-sle-regularity} (and earlier \cite{wer-sle-pvar}) we know the optimal $p$-variation and Hölder exponents which are $p_* = d = (1+\kappa/8) \wedge 2$ and $\alpha_* = 1-\frac{\kappa}{24+2\kappa-8\sqrt{8+\kappa}}$. More precisely, the trace has finite $p$-variation for $p > d$ and infinite $p$-variation for $p < d$. And (under capacity parametrisation) it is Hölder continuous with exponent $\alpha < \alpha_*$ and not Hölder continuous of exponent $\alpha > \alpha_*$. As for the critical exponents, we do not know but expect that the traces do not have these regularities. This leads to the question of finding the correct modulus. As a comparison, the optimal variation regularity of Brownian motion is shown in \cite{tay-bm-variation} to be $\psi(x) = x^2(\logp\logp(1/x))^{-1}$, and the optimal modulus of continuity is $\varphi(t) = t^{1/2}(\logp(1/t))^{1/2}$ (cf.\@ \cite{Lev37}).

Variation regularity seems more natural in the context of SLE since it is parametrisa\-tion-independent, and for many applications authors only care about SLE as a curve and not about its parametrisation. It corresponds to the best modulus that one can get from any parametrisation of the curve. It naturally gives an upper bound on its Hausdorff dimension which turns out to be its true Hausdorff dimension (cf.\@ \cite{bef-sle-dimension}). The capacity parametrisation arises from analysis via the Loewner equation, but does not enjoy very good regularity. In fact, the natural parametrisation introduced in \cite{ls-natural-parametrisation, lz-natural-parametrisation} gives much better regularity. At least on the Hölder scale, it has the optimal regularity that matches the variation exponent. Namely, it is shown in \cite{zhan-hoelder, ghm-kpz-sle} that (some variant of) SLE in natural parametrisation is Hölder continuous for any exponent $\alpha < 1/d$. The arguments in our paper also suggest a close link between $p$-variation and Minkowski content (which is the natural parametrisation, cf.\@ \cite{lr-minkowski-content}). We leave it to future work to explore further this connection.

\emph{Update:} It was later proved in \cite{hy-sle-regularityx} that the natural parametrisation does not give the best possible modulus of continuity.

The regularity of the domain given by the complement of the SLE trace has also been studied by many authors including \cite{rs-sle, kang-sle-boundary, bs-aims-sle, gms-sle-multifractal, kms-sle48x}. This can be expressed by the regularity of the conformal map that maps the upper half-plane to the domain. It has been shown that at any fixed time, this map is Hölder continuous when $\kappa\neq 4$, and has logarithmic modulus of continuity when $\kappa=4$. In this paper, we prove a similar result when the time is not fixed but the supremum over all times is considered. (As we will explain, this is only relevant in the space-filling regime $\kappa \ge 8$.)

The first results of this paper are about refined variation and Hölder-type regularities of the \slek{} trace.

\begin{theorem}\label{thm:main_psivar}
Let $\kappa \neq 8$. The \slek{} trace $\gamma$ on $[0,T]$ almost surely has finite $\psi$-variation where $\psi(x) = x^{d} (\logp 1/x)^{-d-\varepsilon}$, $d = (1+\kappa/8) \wedge 2$.
\end{theorem}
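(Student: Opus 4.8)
The plan is to analyse the \emph{forward} Loewner flow and to reduce everything to moment estimates for the derivative of the recentred uniformising map at the tip, then to package these into a superadditive control function. I would first establish a local estimate: for $0\le s<s+\delta\le T$, the domain Markov property lets us write $\gamma[s,s+\delta]=\hat f_s(\tilde\gamma[0,\delta])$, where $\hat f_s=g_s^{-1}(W_s+\cdot)-\gamma(s)$ depends only on $\gamma[0,s]$ and $\tilde\gamma$ is an independent \slek{} trace, and a Koebe/harmonic-measure distortion estimate in the half-plane (using that $0$ is the tip and $\tilde\gamma[0,\delta]$ has half-plane capacity $2\delta$) gives
\[
\diam\big(\gamma[s,s+\delta]\big)\;\lesssim\;\big|\hat f_s'(i\rho)\big|\,\rho,\qquad \rho\asymp\diam\big(\tilde\gamma[0,\delta]\big).
\]
The value of this step is that it separates the two independent sources of stretching, the conformal map $\hat f_s$ and the shape of $\tilde\gamma$.

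The moment input for the two factors is where the optimal exponent enters. From the forward ODE $\partial_u\log g_u'(z)=-2(g_u(z)-W_u)^{-2}$, tracking $\Im g_u(z)$ and $|g_u'(z)|$ for $z$ near the tip (with $\dist(z,\gamma[0,u])\asymp\Im g_u(z)/|g_u'(z)|$), one obtains a one-point bound $\bbE[|\hat f_s'(iy)|^q]\lesssim y^{-\zeta(q)}$ with explicit multifractal exponent $\zeta$ for $s\asymp1$, $y\le1$, while the shape factor satisfies $\bbP[\diam(\tilde\gamma[0,\delta])\ge\lambda\sqrt\delta]\lesssim\lambda^{-c}$ with $c$ large in the relevant range of $q$. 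Combining by independence and rescaling, and choosing the exponent so that the combined exponent is exactly $d=(1+\kappa/8)\wedge2$ (this is where the optimal $p$-variation exponent of Viklund--Lawler and Friz--Tran is used, and where $\kappa=8$, at which $\zeta$ and $\alpha_*$ degenerate, must be excluded), yields a conditional estimate $\bbE[\diam(\gamma[s,s+\delta])^{d}\mid\mathcal F_s]\lesssim\delta$ up to a mild logarithmic correction, together with control of the deviations above this typical size.

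The refined $\psi$-variation then follows from a multi-scale argument, which I would package as the construction of a random superadditive $\omega$ on $\{(s,t)\colon 0\le s\le t\le T\}$ with $\omega(0,T)<\infty$ almost surely and, with a random constant, $\diam(\gamma[s,t])\le\psi^{-1}(\omega(s,t))$ for all $s\le t$. Granting this, finite $\psi$-variation is immediate: since $\psi$ is increasing and $|\gamma(t_i)-\gamma(t_{i+1})|\le\diam(\gamma[t_i,t_{i+1}])$, superadditivity gives $\sum_i\psi(|\gamma(t_i)-\gamma(t_{i+1})|)\le\sum_i\omega(t_i,t_{i+1})\le\omega(0,T)$ for every partition. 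For $\omega$ I would take a quantity comparable to the Minkowski content $\Cont_d(\gamma[s,t])$ of the sub-arc (as in the description of Minkowski content by Lawler and Rezaei), so that $\omega(0,T)<\infty$ follows from finiteness of the content, equivalently from a one-point bound $\bbE[\omega(0,T)]<\infty$ along the forward flow; the substantial part is the modulus bound, which reads $\diam(\gamma[s,t])^d\lesssim\omega(s,t)\,(\logp 1/\omega(s,t))^{d+\varepsilon}$ (recall $\psi^{-1}(u)\asymp u^{1/d}(\logp 1/u)^{1+\varepsilon/d}$), obtained by covering $\gamma[s,t]$ at dyadic scales, applying the local estimate at each scale with a union bound over $\gamma[0,T]$, and summing the scale-by-scale errors, which produce exactly a power-of-$\log$ loss, with the extra $\varepsilon$ the price that makes this geometric-type sum over scales converge.

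The main obstacle is this last step. The exponent $d$ must be hit exactly, so one works at the \emph{borderline} of the range of finite moments of $|\hat f'|$ and of $\diam(\gamma[0,1])$ (where $\bbE[\diam(\gamma[0,1])^d]$ is barely finite, if at all), and the logarithmic corrections must be bookkept sharply through the union bound over all sub-arcs and scales; this is precisely where the link with (expected) Minkowski content is used. I expect the range $\kappa\in(4,8)$, where $d=1+\kappa/8\in(3/2,2)$, the trace has double points and the relevant moments of $|\hat f'|$ lie closest to their blow-up threshold, to be the most demanding, with $\kappa\le4$ comparatively soft, $\kappa>8$ (where $d=2$) close in spirit to $\kappa$ slightly below $8$, and $\kappa=8$ genuinely outside the reach of the argument.
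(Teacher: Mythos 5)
Your outline contains two genuine gaps, one local and one global, and the global one is fatal to the argument as described. Locally: the bound $\diam(\gamma[s,s+\delta])\lesssim|\hat f_s'(i\rho)|\,\rho$ with $\rho\asymp\diam(\tilde\gamma[0,\delta])$ is not a valid upper bound. Since $\tilde\gamma[0,\delta]$ is attached to the boundary at $0$, controlling the diameter of its image requires integrating $|\hat f_s'|$ along the vertical segment down to scale $0$, i.e.\ a bound of the form $\sum_{m\ge 0}\rho 2^{-m}|\hat f_s'(i\rho 2^{-m})|$; the single top-scale term only gives the matching \emph{lower} bound (this is exactly the point of \cref{rm:crad_vs_path}, and handling the sum over smaller scales at the critical exponent is one source of the logarithmic correction). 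Moreover, the claimed conditional estimate $\bbE[\diam(\gamma[s,s+\delta])^{d}\mid\mathcal F_s]\lesssim\delta$ cannot hold as stated: conditionally on $\mathcal F_s$ the factor $|\hat f_s'(i\sqrt\delta)|^{d}$ is deterministic and can be arbitrarily large (e.g.\ when the tip sits deep inside a fjord for $4<\kappa<8$), so only an unconditional estimate, averaging over $\hat f_s'$, can be true, and at the exponent $d$ it is exactly borderline.

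Globally, the entire difficulty is deferred to the last step — the construction of a superadditive $\omega\asymp\Cont_d(\gamma[s,t])$ together with the \emph{uniform} modulus $\diam(\gamma[s,t])^{d}\lesssim\omega(s,t)\,(\logp 1/\omega(s,t))^{d+\varepsilon}$ simultaneously for all $s<t$ — and the proposed mechanism ("apply the local estimate at each dyadic scale with a union bound") does not deliver it. A first-moment estimate at the critical exponent $d$ leaves no excess decay to pay for a union over the $\asymp\delta^{-1}$ sub-arcs per scale and over all scales: the expected number of exceptional sub-arcs at each scale is of order one (or worse), so no almost-sure uniform statement follows this way. A uniform lower bound on the content of every sub-arc in terms of its diameter is essentially the main theorem of \cite{hy-sle-regularityx}, which requires natural parametrisation, two-point estimates, reversibility and duality inputs absent from your outline; the present paper explicitly leaves that route to future work. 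The paper instead avoids any union bound over sub-arcs: it bounds the supremum over all partitions by a single partition-independent random series (\cref{lem:gvar_bound}), obtained by running the forward flow from the deterministic grid of \cref{thm:fw_grid}, parametrising by conformal radius so that $\log Y_{\sigma(s)}^{-1}$ becomes a radial Bessel clock, and inserting the $(\logp)^{-d-\varepsilon}$ weight through Jensen's inequality; the expectation of that series is then computed via the Bessel-clock moment bounds and \cref{le:fw_sum_grid}, converging precisely for $p=d$, $q>d$, with the cross term $|\hat f_{t_{i+1}}(iv_i)-\hat f_{t_i}(iv_i)|$ handled by an independent Brownian-increment factor. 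If you want to salvage your plan, you would need either the machinery of \cite{hy-sle-regularityx} or a replacement for the union-bound step, e.g.\ exactly the kind of partition-independent majorant the paper constructs.
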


We also show that the $\psi$-variation constant has finite moments, see \cref{thm:gvar}.

\begin{corollary}
Let $\kappa \neq 8$. The \slek{} trace can be parametrised such that
\[ \abs{\tilde\gamma(t)-\tilde\gamma(s)} \le C\abs{t-s}^{1/d}\left(\logp\frac{1}{\abs{t-s}}\right)^{1+\varepsilon} \]
where $d = (1+\kappa/8) \wedge 2$.
\end{corollary}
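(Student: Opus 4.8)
\emph{Sketch of proof.}
This is the $\psi$-variation analogue of the classical fact that a path of finite $p$-variation can be reparametrised so as to become $1/p$-Hölder, and I would prove it by the usual control-function argument followed by an elementary inversion of $\psi$. First recall that, for increasing $\psi$ with $\psi(0)=0$, the $\psi$-variation of $\gamma$ on a subinterval $[s,t]$,
\[ \omega(s,t) \defeq \sup_{s=u_0<\dots<u_n=t} \sum_{i=1}^n \psi\bigl(\abs{\gamma(u_i)-\gamma(u_{i-1})}\bigr), \]
is superadditive ($\omega(s,u)+\omega(u,t)\le\omega(s,t)$, by concatenating partitions), satisfies $\psi(\abs{\gamma(t)-\gamma(s)})\le\omega(s,t)$ (trivial partition), and by \cref{thm:main_psivar} has $L\defeq\omega(0,T)<\infty$ almost surely for $\psi(x)=x^d(\logp 1/x)^{-d-\varepsilon}$. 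The first step is to show that $(s,t)\mapsto\omega(s,t)$ is continuous; this goes exactly as for $p$-variation, using uniform continuity of $\gamma$ on $[0,T]$ together with finiteness of $L$, and uses only that $\psi$ is continuous, increasing near $0$, vanishes only at $0$, and has moderate growth — all of which hold here. I expect this continuity to be the only genuinely technical point.

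Granting it, set $\rho(t)\defeq\omega(0,t)$, a continuous non-decreasing surjection $[0,T]\to[0,L]$. On any interval where $\rho$ is constant, superadditivity forces $\omega(t_1,t_2)=0$, hence $\gamma$ is constant there, so $\eta(s)\defeq\gamma(t)$ for $t\in\rho^{-1}(s)$ defines a continuous curve on $[0,L]$ with $\gamma=\eta\circ\rho$. For $0\le a<b\le L$, choosing $t_a\in\rho^{-1}(a)$, $t_b\in\rho^{-1}(b)$ (so $t_a<t_b$), superadditivity gives
\[ \psi\bigl(\abs{\eta(b)-\eta(a)}\bigr) = \psi\bigl(\abs{\gamma(t_b)-\gamma(t_a)}\bigr) \le \omega(t_a,t_b) \le \omega(0,t_b)-\omega(0,t_a) = b-a , \]
hence $\abs{\eta(b)-\eta(a)}\le\psi^{-1}(b-a)$, since $\psi$ is a strictly increasing bijection near $0$ (its values at large arguments are irrelevant because $\gamma$ is bounded, so we may modify it harmlessly there). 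Reparametrising affinely, $\tilde\gamma(t)\defeq\eta(Lt/T)$ on $[0,T]$, then yields $\abs{\tilde\gamma(t)-\tilde\gamma(s)}\le\psi^{-1}\bigl(\tfrac LT\abs{t-s}\bigr)$.

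It remains to estimate $\psi^{-1}$. With $x=\psi^{-1}(y)$, $u=\log(1/x)$, $v=\log(1/y)$ (all large), the defining relation reads $v=du+(d+\varepsilon)\log u$, so $u=\tfrac vd-\tfrac{d+\varepsilon}{d}\log u$; since $u=\tfrac vd+O(\log v)$ we get $\log u=\log v+O(1)$, hence $u=\tfrac vd-\tfrac{d+\varepsilon}{d}\log v+O(1)$, i.e.
\[ \psi^{-1}(y) = e^{-u} \le C\,y^{1/d}(\logp 1/y)^{1+\varepsilon/d} \]
for $y$ near $0$, with $C$ depending only on $\kappa$ and $\varepsilon$. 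Because $\psi^{-1}$ has moderate growth ($\psi^{-1}(\lambda y)\le C_\lambda\psi^{-1}(y)$), the factor $L/T$ is absorbed into the constant; and $L<\infty$ a.s.\ (by \cref{thm:main_psivar}, with moments available via \cref{thm:gvar}) makes the resulting constant a.s.\ finite. Relabelling the arbitrary positive parameter $\varepsilon/d$ as $\varepsilon$ gives $\abs{\tilde\gamma(t)-\tilde\gamma(s)}\le C\abs{t-s}^{1/d}(\logp 1/\abs{t-s})^{1+\varepsilon}$, as claimed.
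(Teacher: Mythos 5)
Your proposal is correct and follows essentially the same route as the paper: \cref{thm:main_psivar} (finite $\psi$-variation for $\psi(x)=x^d(\logp 1/x)^{-d-\varepsilon}$) combined with the standard reparametrisation-by-$\psi$-variation fact recorded in \cref{se:prelim_psi_var} (continuity of $t\mapsto V^1_{\psi;[0,t]}(\gamma)$, cited there to L\'esniewicz--Orlicz), followed by the elementary inversion $\psi^{-1}(y)\lesssim y^{1/d}(\logp 1/y)^{1+\varepsilon/d}$ and a relabelling of $\varepsilon$. The only difference is that you re-derive the control-function/plateau argument that the paper simply quotes, and your asymptotic computation of $\psi^{-1}$ is accurate.
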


Throughout this paper, we let $\ell\colon {[1,\infty[} \to \bbR^+$ denote a non-decreasing function such that $\lim_{s \to \infty} \frac{\ell(2s)}{\ell(s)} = 1$ and
\begin{equation}\label{eq:ell_def}
\int_1^\infty \frac{ds}{s\ell(s)} < \infty .
\end{equation}
Typical examples are $\ell(s) = (\logp s)^{1+\varepsilon}$ or $\ell(s) = (\logp s)(\logp\logp s)\cdots(\logp\cdots\logp s)^{1+\varepsilon}$.

\begin{theorem}\label{thm:main_hoelder}
Let $\kappa \neq 8$. For any fixed $t_0 > 0$, the \slek{} trace $\gamma$ (parametrised by half-plane capacity) restricted to $[t_0,T]$ almost surely satisfies
\[
\abs{\gamma(t)-\gamma(s)} \le C\abs{t-s}^{\alpha} \left(\logp\frac{1}{\abs{t-s}}\right)^{\beta} \ell\left(\logp\frac{1}{\abs{t-s}}\right)^{\beta}
\]
where $\alpha = 1-\frac{\kappa}{24+2\kappa-8\sqrt{8+\kappa}}$, $\beta = \frac{\kappa}{(12+\kappa)\sqrt{8+\kappa}-4(8+\kappa)}$, and $\ell$ as in \eqref{eq:ell_def}.

In case $\kappa > 1$, the same is true for the \slek{} trace on $[0,T]$.

In case $\kappa \le 1$, there exists $\beta < 1$ such that the \slek{} trace on $[0,T]$ almost surely satisfies
\[
\abs{\gamma(t)-\gamma(s)} \le C\abs{t-s}^{1/2} \left(\logp\frac{1}{\abs{t-s}}\right)^{\beta} .
\]
\end{theorem}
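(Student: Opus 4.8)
The plan is to control a trace increment $\abs{\gamma(t)-\gamma(s)}$ through the conformal distortion of the inverse Loewner map $\hat f_s\defeq g_s^{-1}(\,\cdot\,+U_s)\colon\bbH\to\bbH\setminus K_s$ near the preimage of the tip, to obtain from the \emph{forward} Loewner flow a sharp moment bound for this distortion that is uniform in $s$ and precise enough to see the critical logarithm, and then to run a dyadic multi-scale argument in which the borderline convergence is controlled exactly by \eqref{eq:ell_def}. The behaviour at time $0$ is treated separately, and it is what forces the case split at $\kappa=1$.

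\emph{Reduction.} For $0\le s<t$ the domain Markov property identifies $g_s(\gamma[s,t])$ with $U_s+\tilde\gamma[0,t-s]$ for an independent \slek{} trace $\tilde\gamma$; in particular it lies in the ball $B(U_s,R_{s,t}\sqrt{t-s})$ with $R_{s,t}$ independent of $\cF_s$ and stretched-exponentially tailed, uniformly in $(s,t)$. The Koebe distortion theorem applied to $\hat f_s$ (with the standard care at the boundary, as in \cite{rs-sle}, and using that $v\mapsto\abs{\hat f_s'(iv)}$ varies at most polynomially to pass from height $R_{s,t}\sqrt{t-s}$ to $\sqrt{t-s}$) then yields
\[
\abs{\gamma(t)-\gamma(s)}\le\diam\gamma[s,t]\lesssim R_{s,t}^{\,C}\,\sqrt{t-s}\;\bigl|\hat f_s'\bigl(i\sqrt{t-s}\bigr)\bigr| .
\]
Since $R_{s,t}^{\,C}$ is independent of $\hat f_s$ and light-tailed it will cost nothing, so the task reduces to bounding $\abs{\hat f_s'(iv)}$ over $s\le T$ at the dyadic heights $v=2^{-n/2}$.

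\emph{The derivative estimate (the main obstacle).} For a fixed point $z_0\in\bbH$, the pair $O_\tau\defeq g_\tau(z_0)-U_\tau$ and $D_\tau\defeq g_\tau'(z_0)$ is a closed Markov process under the forward Loewner SDE, with $\hat f_\tau'(O_\tau)=1/D_\tau$ and $Y_\tau\defeq\Im O_\tau$ strictly decreasing towards the swallowing time; letting $z_0$ run over a grid of size polynomial in $1/v$ and using a short-range continuity estimate, controlling $\abs{\hat f_s'(iv)}$ uniformly in $s$ reduces to controlling $1/D_\tau$ at the time $Y_\tau$ crosses $v$, for each fixed starting point. I would build a local martingale of the form $D_\tau^{\,b}\Phi(O_\tau)$, where $b$ and the conformally natural function $\Phi$ (a hypergeometric-type solution of the associated second-order ODE) are chosen to kill the drift, tilt the measure by it so that $O_\tau$ performs a Bessel-type flow, and compute the law of $D$ at the crossing time. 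This yields, uniformly in $s\le T$,
\[
\bbE\bigl[\,\abs{\hat f_s'(iv)}^{q}\,\bigr]\lesssim v^{-\eta(q)}
\]
(with the natural $\sqrt s$-dependence when $\eta(q)<0$), with an explicit $\eta$; this is the forward-flow counterpart of the reverse-flow derivative estimates in \cite{vl-sle-hoelder,ft-sle-regularity} — the single-time marginal matches theirs — but the forward framework is what makes it uniform in $s$ and sharp at the critical moment. One then has $\alpha_*=\max_q\bigl(\tfrac12-\tfrac1q-\tfrac{\eta(q)}{2q}\bigr)$, the maximiser $q=q_*$ producing exactly the stated $\alpha$, and at $q=q_*$ criticality forces $\beta=1/q_*$; checking that $q_*,\eta,\alpha,\beta$ have the claimed closed forms is then a computation. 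I expect the delicate points to be the ODE/Girsanov analysis with the hypergeometric observable and the precision needed to pin down $\beta$ rather than merely a sub-critical exponent.

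\emph{Chaining and the endpoint.} With $\theta_n\defeq 2^{-n\alpha}(\logp 2^n)^{\beta}\ell(\logp 2^n)^{\beta}$ and $v=2^{-n/2}$, the reduction, Markov's inequality and the light tails of $R_{s,t}$ give $\bbP\bigl[\abs{\gamma((j{+}1)2^{-n})-\gamma(j2^{-n})}>\theta_n\bigr]\lesssim\theta_n^{-q_*}v^{q_*}\,\bbE\bigl[\abs{\hat f_{j2^{-n}}'(iv)}^{q_*}\bigr]$; summing over the $O(2^n)$ consecutive dyadic pairs at level $n$, the critical choice $\alpha=\alpha_*$ makes all powers of $2$ cancel and leaves $\sum_n(\logp 2^n)^{-1}\ell(\logp 2^n)^{-1}\asymp\int_1^\infty\frac{ds}{s\ell(s)}<\infty$. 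By Borel--Cantelli and telescoping along dyadic times (the series $\sum_{n\ge n_0}\theta_n$ being dominated by $\theta_{n_0}$, $2^{-n_0}\asymp\abs{t-s}$) this gives the stated modulus on $[t_0,T]$. For the endpoint $0$: when $\kappa>1$ one has $\alpha_*<\tfrac12$, so the $\sqrt\delta$-escape near the starting point is dominated --- a.s.\ $\rad(K_\delta)\lesssim\sqrt\delta\,(\logp 1/\delta)^{c}$ over dyadic $\delta$, from the all-moments tails of $\rad(K_1)$ and Brownian scaling, which is $o\bigl(\delta^{\alpha_*}(\logp 1/\delta)^{\beta}\bigr)$ --- and combining this with the estimate applied down to times $s\asymp\abs{t-s}$ extends the modulus to $[0,T]$. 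When $\kappa\le1$ one has $\alpha_*>\tfrac12$, and near time $0$ this $\sqrt\delta$-escape becomes the binding constraint; running the same scheme with exponent $\tfrac12$ (which, being sub-critical, uses a moment $q$ with $\eta(q)<-2$ and only needs $\sum_j j^{-\abs{\eta(q)}/2}<\infty$) together with the hull-radius bound gives $\abs{\gamma(t)-\gamma(s)}\le C\abs{t-s}^{1/2}(\logp 1/\abs{t-s})^{\beta}$ on $[0,T]$ for a suitable $\beta<1$.
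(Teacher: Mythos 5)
Your overall architecture (forward-flow one-point martingales, a critical moment exponent $q_*$ with $\alpha_*=\max_q(\tfrac12-\tfrac1q-\tfrac{\eta(q)}{2q})$ and $\beta=1/q_*$, and a borderline Borel--Cantelli sum whose convergence is exactly \eqref{eq:ell_def}) is parallel to the paper's, and your exponent arithmetic for the dyadic union bound does reproduce the cancellation of powers of $2$ leaving $\sum_n n^{-1}\ell(n)^{-1}$. Your martingale step is essentially the paper's computation in disguise: under the conformal-radius parametrisation the observable becomes $e^{4\lambda s}Y^{\zeta+\lambda}(1+X^2/Y^2)^{r/2}$, i.e.\ the change-of-measure martingale for the radial Bessel angle process, and no hypergeometric function is needed; moreover the optimising parameter sits strictly inside the admissible range, so the moment bound itself is not the critical issue.

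The genuine gap is your Reduction step. The inequality $\abs{\gamma(t)-\gamma(s)}\le\diam\gamma[s,t]\lesssim R_{s,t}^{\,C}\sqrt{t-s}\,\abs{\hat f_s'(i\sqrt{t-s})}$ is not a consequence of Koebe distortion and is false as a deterministic statement: $\gamma[s,t]=\hat f_s(\tilde\gamma[0,t-s])$, and the curve $\tilde\gamma$ starts on (and, for $\kappa>4$, revisits) the real line, so its image is governed by $\hat f_s'$ at heights \emph{below} $\sqrt{t-s}$, where a conformal map of $\bbH$ can open into arbitrarily deep fjords; distortion only compares derivatives at comparable heights (your ``polynomial variation'' remark handles $R\sqrt{t-s}$ versus $\sqrt{t-s}$, not the boundary scales). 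The correct bound, as in \cref{rm:crad_vs_path} and \eqref{eq:f_int}, goes through $\abs{\gamma(s)-\hat f_s(iv)}\lesssim\sum_{m\ge0}v2^{-m}\abs{\hat f_s'(iv2^{-m})}$ (plus the cross term $\abs{\hat f_t(iv)-\hat f_s(iv)}$ of \eqref{eq:f_diff}, which your $R_{s,t}$ device does not cover and which needs the $(1+\abs{\xi(t)-\xi(s)}^2/\abs{t-s})^l$ factor as in \cref{lem:brownian_increment}). Repairing this is not cosmetic at the refined scale you are after: once the sum over scales is reinstated, a per-scale union bound with thresholds split across scales loses an additional logarithm, which is precisely the quantity the theorem pins down. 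The paper avoids this loss by exploiting the $(\tilde\Delta_c)$ property of $\varphi$ to convert the sum over scales into a maximum \emph{before} taking expectations (\cref{lem:hoelder_bound}), so that each scale is charged against $\varphi$ at its own scale; your proposal contains no substitute for this step. Separately, for $\kappa\le1$ your sketch does not address the first assertion of the theorem on $[t_0,T]$ (the optimal $\alpha>\tfrac12$ away from $t=0$), which in the paper requires the boundary-proximity input of \cref{le:sle_hit_after_1} to suppress the near-real-axis contributions that would otherwise dominate the estimate.
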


We also show that the random factor $C$ has finite moments, see \cref{thm:hoelder}.

\begin{remark}
We actually show that the supremum over partitions $t_0<t_1<...<t_r=T$ of
\[ \sup \sum_j \left( \frac{\abs{\gamma(t_j)-\gamma(t_{j-1})}}{\abs{t_j-t_{j-1}}^{\alpha}(\logp\frac{1}{\abs{t_j-t_{j-1}}})^{\beta} \ell(\logp\frac{1}{\abs{t_j-t_{j-1}}})^{\beta}
} \right)^{p} \]
is almost surely finite for suitable $p > 1$.

Moreover, we can interpolate between this statement and \cref{thm:main_psivar} and have a much wider range of regularity statements.
\end{remark}

\begin{remark}
The boundary effect at $t=0$ is a feature of the half-plane capacity parametrisation, and was already present in the results of \cite{vl-sle-hoelder}. Indeed, \cite[Lemma 3.4]{vl-sle-hoelder} implies that at $t=0$ we cannot have better than $1/2$-Hölder regularity. A way of describing the regularity of the full path with singularity at $0$ was introduced in \cite{bfg-singular-path}.

Our proof gives also an explicit exponent $\beta$ in the case with boundary effect, but we have not attempted to optimise it.
\end{remark}

\noindent\emph{Update:} It was later shown in \cite{hy-sle-regularityx} that the correct $\psi$-variation (at least on segments away from the boundary) is $\psi(x) = x^d(\log\log 1/x)^{-(d-1)}$. The proof in \cite{hy-sle-regularityx} uses some deeper results specific to SLE (such as natural parametrisation, whole-plane reversibility, duality, etc.), and does not immediately transfer to points close to the boundary. The proofs in this paper are obtained directly from the Loewner equation, and do not put limitations on points near the boundary. Also, they could in principle be adapted to other driving functions such as studied in \cite{gw-alpha-sle, cr-le-symmetric-stable, msy-le-semimartingalex, ps-le-levyx}.

\medskip

We do not know whether our exponent $\beta$ in \cref{thm:main_hoelder} is optimal. To prove similar lower bounds, one would need sharp decorrelation results for the SLE flows started from different points. From the arguments in our paper (with some additional refinements) one can recover that the exponents $d$ and $\alpha$ are optimal, but it seems technically more difficult to push the lower bounds to the more refined scale.

Our results so far concern \slek{} with $\kappa \neq 8$. The case $\kappa = 8$ is much more challenging because the trace (under capacity parametrisation) has very bad regularity. Recently, it was shown by \cite{kms-sle48x} that the regularity of SLE$_8$ is given by the modulus $\varphi(t) = (\logp(\frac{1}{t}))^{-1/4+\varepsilon}$. Although we do not know how to give a completely elementary proof of this fact, we are able to simplify a step in their proof. We show that \cite[Lemma 7.1]{kms-sle48x} contains all the non-elementary information on the regularity of SLE$_8$. More precisely, taking the lemma for granted, we can deduce the result by elementary arguments of our paper.

\begin{theorem}[{\cite[Theorem 1.3]{kms-sle48x}}]\label{thm:main_sle8}
The SLE$_8$ trace $\gamma$ on $[0,T]$ (parametrised by half-plane capacity) almost surely satisfies
\[ \abs{\gamma(t)-\gamma(s)} \le C \left(\logp\frac{1}{\abs{t-s}}\right)^{-1/4+\varepsilon} . \]
\end{theorem}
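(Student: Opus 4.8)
The plan is to reduce \cref{thm:main_sle8} to a single probabilistic input — \cite[Lemma 7.1]{kms-sle48x} — by exactly the kind of Loewner-equation manipulation and union-bound chaining used in the proofs of \cref{thm:main_psivar,thm:main_hoelder}. Concretely: (a) at one fixed time $t$ and one fixed scale, \cite[Lemma 7.1]{kms-sle48x} (in whatever precise form) provides the non-elementary control on how much the SLE$_8$ trace can move over a unit of capacity, or equivalently a tail/moment bound on $\sup_{\abs u \le C\sqrt v}\abs{\hat f_t'(u+iv)}$ of size $v^{-1}(\logp 1/v)^{-1/4+\varepsilon}$; (b) a union bound over a dyadic family of scales and a suitably fine time grid, together with the deterministic continuity of $t \mapsto \hat f_t$, upgrades this to the uniform-in-$t$ derivative bound $\sup_{t\le T}\abs{\hat f_t'(u+iv)} \lesssim v^{-1}(\logp 1/v)^{-1/4}(\logp\logp 1/v)^{1+\varepsilon}$; (c) a conformal distortion argument turns the derivative bound into the modulus of continuity for the trace. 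Only step (a) uses anything special about $\kappa=8$.

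For step (c), fix $0 \le t < t+s \le T$ and use the domain Markov property to write $\gamma[t,t+s] - \gamma(t) = \hat f_t(\hat K^t_s)$, where $\hat K^t_s$ is the hull generated by a fresh SLE$_8$ trace (independent of $\cF_t$) run up to half-plane capacity $s$. The hull $\hat K^t_s$ has capacity $s$, so $\diam \hat K^t_s$ is of order $\sqrt s$ outside an event whose probability decays fast in $\diam\hat K^t_s/\sqrt s$ (a standard SLE estimate), and on the complementary event classical conformal distortion estimates (as in \cite{vl-sle-hoelder, ft-sle-regularity}) give
\[ \abs{\gamma(t+s) - \gamma(t)} \le \diam\big(\gamma[t,t+s]\big) \lesssim \sqrt s\,\sup_{\abs u \le C\sqrt s}\abs{\hat f_t'(u+i\sqrt s)}. \]
Substituting $v = \sqrt s$ into the uniform derivative bound makes the right-hand side $\asymp (\logp 1/s)^{-1/4}(\logp\logp 1/s)^{1+\varepsilon}$, which is $\le C(\logp 1/s)^{-1/4+\varepsilon}$ after relabelling $\varepsilon$. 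This is the direct $\kappa=8$ analogue of the reduction carried out for \cref{thm:main_hoelder}, and the exceptional events (fresh hull too large) are countably many once $(t,s)$ is restricted to a dyadic mesh and are folded into the union bound of step (b).

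For step (b), apply \cite[Lemma 7.1]{kms-sle48x} at each pair $(t_j, 2^{-n})$, where $n$ ranges over dyadic scales $v = 2^{-n}$ and $t_j = j\,2^{-m(n)}$ is a time grid with $m(n)$ chosen — as in \cref{thm:main_hoelder}, using the function $\ell$ of \eqref{eq:ell_def} — large enough that the number of grid points grows sub-exponentially in $n$ while still being fine enough that the deterministic drift of $t \mapsto \hat f_t'(z)$ coming from the Loewner ODE (controlled by the a priori bounds on $g_t'$ and $W_t$ established earlier in the paper) cannot carry $\abs{\hat f_t'}$ across the threshold between consecutive grid times. Summing the probabilities, Borel–Cantelli gives that almost surely only finitely many grid points violate the bound; interpolating deterministically to all $t\in[0,T]$ and to all $v$ within a dyadic block, and combining scales, yields the uniform estimate with an almost surely finite constant $C$.

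The only genuinely non-elementary ingredient — the fact that the decay is as slow as $(\logp)^{-1/4}$ and no slower — is entirely inside \cite[Lemma 7.1]{kms-sle48x}, which we take for granted and do not reprove. Within the elementary part, the main point to get right is the discretisation balance in step (b): the time grid at scale $2^{-n}$ must be fine enough that the Loewner drift is harmless between grid points, yet coarse enough that the union over all grid points and scales converges, and the slack between the pointwise rate $(\logp 1/v)^{-1/4+\varepsilon}$ and the uniform rate $(\logp 1/v)^{-1/4}(\logp\logp 1/v)^{1+\varepsilon}$ — together with the summability \eqref{eq:ell_def} — is precisely what makes both constraints compatible. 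One also has to check that the fresh-hull diameter bound used in step (c) holds with a tail strong enough to survive the same union bound, but this is a routine SLE estimate. Since none of this uses anything about $\kappa=8$ beyond \cite[Lemma 7.1]{kms-sle48x}, the argument shows that that lemma carries all the non-elementary content of \cref{thm:main_sle8}.
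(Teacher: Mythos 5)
There is a genuine gap, and it sits exactly at the point where the $\kappa=8$ case is hard. Your step (c) claims that, once the fresh hull $\hat K^t_s$ has diameter of order $\sqrt s$, ``classical conformal distortion estimates'' give $\abs{\gamma(t+s)-\gamma(t)} \le \diam \hat f_t(\hat K^t_s) \lesssim \sqrt s\,\sup_{\abs u \le C\sqrt s}\abs{\hat f_t'(u+i\sqrt s)}$. No such estimate exists: the hull $\hat K^t_s$ reaches down to the real line, so the classical bound on the diameter of its image is obtained by integrating the derivative down to the boundary, i.e.\ it is of the form $\int_0^{\sqrt s}\abs{\hat f_t'(iu)}\,du \asymp \sum_m \Upsilon_t(\hat f_t(i\sqrt s\,2^{-m}))$ as in \eqref{eq:f_int}. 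For $\kappa=8$ the best possible uniform-in-$t$ derivative bound is $\abs{\hat f_t'(iv)} \approx v^{-1}(\logp 1/v)^{-1/4}$ (this is the content of \cref{th:main_unif_map}), and with that input the integral $\int_0 u^{-1}(\logp 1/u)^{-1/4+\varepsilon}\,du$ diverges; this is precisely the obstruction spelled out in \cref{rm:crad_vs_path} (``the upper bound is too bad to obtain any regularity results''). So a uniform derivative bound of the size you posit in steps (a)--(b) cannot, by itself, yield the modulus of continuity of the trace, and the single-scale inequality you write in step (c) is false in general.

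Relatedly, step (a) misreads what \cite[Lemma 7.1]{kms-sle48x} provides. It is not a tail bound on $\sup_{\abs u\le C\sqrt v}\abs{\hat f_t'(u+iv)}$; it is a ball-filling statement: with high probability, whenever the trace travels distance $\delta$ it fills a ball of radius $\delta^{1+\varepsilon}$. The paper's proof (\cref{se:sle8}) uses exactly this form, extracted as \cref{le:ball_filled}: if $\abs{\gamma(t)-\gamma(s)}>\delta$ then there is a point $z \in K_t\setminus K_s$ with $\Upsilon_s(z) \ge \delta^{1+\varepsilon}$ and $Y_s(z) \le 2\abs{t-s}^{1/2}$, so the increment is bounded by a \emph{single} conformal radius to the power $1/(1+\varepsilon)$ rather than by the divergent sum above. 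That inversion is what replaces your step (c); after it, the elementary part proceeds via the spatial grid $H(h,M,T)$, the conformal-radius parametrisation, and the critical ($\nu=0$) Bessel-clock tail of \cref{pr:bes_clock_critical} — no time-grid discretisation or Borel--Cantelli over times is needed. As written, your argument assumes away the core difficulty and would not close.
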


Moreover, we show in \cref{se:sle8} that the random factor $C$ has finite $(2+\delta)$th moment.

\begin{remark}
In fact, we have the slightly stronger result that (at least on segments away from the boundary)
\[
\abs{\gamma(t)-\gamma(s)} \le 
C \left(\logp\frac{1}{\abs{t-s}}\right)^{-1/4}\left(\logp\logp\frac{1}{\abs{t-s}}\right)^{2}\ell\left(\logp\logp\frac{1}{\abs{t-s}}\right)^{1/2} .
\]
This is obtained by using a stronger version of \cite[Lemma 7.1]{kms-sle48x}. Indeed, following the proof of \cite[Proposition 3.4]{ghm-kpz-sle} (see also \cite[Theorem 1.7]{hy-sle-regularityx}), one sees that with probability tending to $1$ as $r \searrow 0$, every segment $\gamma[s,t]$ (at least those away from the boundary) fills a ball of radius comparable to $\abs{\gamma(t)-\gamma(s)}/(\log\frac{1}{\abs{\gamma(t)-\gamma(s)}})$ whenever $\abs{\gamma(t)-\gamma(s)} \le r$. Using this, the argument in \cref{se:sle8} show the refined regularity statement.

We have not made an effort to extend the result to points near the boundary, or to bound the moments of the random constant $C$ in the refined statement.
\end{remark}

Our last main result concerns the uniform-in-time regularity of the uniformising map for \slek{}, $\kappa \ge 8$.

\begin{theorem}\label{th:main_unif_map}
For $\kappa \ge 8$, the \slek{} uniformising maps $\hat f_t\colon \bbH \to \bbH \setminus \gamma[0,t]$ almost surely satisfy
\[ \sup_{t \in [0,T]} \abs{\hat f_t'(w)} \le Ch(\Im w) \]
for $w \in \bbH$ with $\Im w \le 1$
where
\[
h(v) =
\begin{cases}
v^{-1}(\logp(\frac{1}{v}))^{-1/4}(\logp\logp(\frac{1}{v}))(\ell(\logp\logp(\frac{1}{v})))^{1/2} & \text{for }\kappa=8 ,\\
v^{2\alpha-1}(\logp(\frac{1}{v}))^\beta \ell(\logp(\frac{1}{v}))^{\beta} & \text{for }\kappa>8 ,
\end{cases}
\]
and $\alpha = 1-\frac{\kappa}{24+2\kappa-8\sqrt{8+\kappa}}$, $\beta = \frac{\kappa}{(12+\kappa)\sqrt{8+\kappa}-4(8+\kappa)}$, and $\ell$ as in \eqref{eq:ell_def}.
\end{theorem}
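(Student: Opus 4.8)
The plan is to transfer the trace regularity estimates (Theorems~\ref{thm:main_hoelder} and \ref{thm:main_sle8}) into a statement about the derivative of the uniformising map via the Koebe distortion theorem, and then to upgrade the fixed-time estimate to a uniform-in-time estimate using the monotonicity of the hulls in $t$. Recall that for $\kappa \ge 8$ the trace is space-filling, so $\gamma[0,t]$ has nonempty interior and $\hat f_t \colon \bbH \to \bbH \setminus \gamma[0,t]$ is a genuine conformal map onto a proper subdomain; the relevant quantity is how fast $\abs{\hat f_t'(w)}$ decays as $\Im w \to 0$, i.e.\@ as $w$ approaches the part of $\bbR$ that gets mapped into the (space-filling) trace. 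By the Koebe $1/4$-theorem and the standard distortion estimates, $\abs{\hat f_t'(u+iv)} \asymp \dist(\hat f_t(u+iv), \partial(\bbH\setminus\gamma[0,t])) / v$, so controlling $\abs{\hat f_t'(u+iv)}$ amounts to controlling how close the image point $\hat f_t(u+iv)$ is to the boundary of the slit domain, uniformly over $u$ and over $t \le T$.

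First I would fix $t$ and a point $w = u+iv \in \bbH$ with $v \le 1$, and set $z = \hat f_t(w)$. Since $\partial(\bbH\setminus\gamma[0,t]) \subseteq \bbR \cup \gamma[0,t]$, the distance $\dist(z,\partial(\bbH\setminus\gamma[0,t]))$ is at most the distance from $z$ to the trace $\gamma[0,t]$ together with $\Im z$. The key geometric input is that a small Euclidean ball around $z$ that avoids $\gamma[0,t]$ has, under the inverse map $g_t = \hat f_t^{-1}$, a preimage of comparable conformal size sitting at height $\asymp v$ in $\bbH$; combined with the Beurling-type / harmonic-measure estimates this forces the ball not to be too small. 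Concretely, I would argue that if $\dist(z,\gamma[0,t]) = \rho$ then, because $\gamma$ hits every neighbourhood densely in the space-filling regime, a segment $\gamma[s',s'']$ of the trace passes within $\asymp\rho$ of $z$, and the modulus of continuity from Theorem~\ref{thm:main_hoelder} (resp.\@ Theorem~\ref{thm:main_sle8}) bounds the capacity-time increment $s''-s'$ needed to traverse a piece of trace of diameter $\asymp\rho$ from below: such a piece takes time at least $\gtrsim \rho^{1/\alpha}(\logp 1/\rho)^{-\beta/\alpha}$ (resp.\@ at least $\exp(-c\rho^{-4/(1+\varepsilon)})$-ish, i.e.\@ $\logp(1/\rho) \gtrsim \rho^{-4/(1-\varepsilon)}$ up to the iterated-log corrections). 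Since the total capacity time available is $T$, the diameter of any such traversed piece, hence $\rho$, is bounded. Running this more carefully gives $\dist(z,\partial(\bbH\setminus\gamma[0,t])) \lesssim v\, h(v)$ with the stated $h$; dividing by $v$ gives the derivative bound for fixed $t$.

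To make the bound uniform in $t$ I would use that $\gamma[0,s] \subseteq \gamma[0,t]$ for $s \le t$, so $\bbH\setminus\gamma[0,t] \subseteq \bbH\setminus\gamma[0,s]$, and that the Loewner flow gives $\hat f_t = \hat f_s \circ \phi_{s,t}$ for an appropriate conformal map $\phi_{s,t}\colon \bbH \to \bbH$; but the cleanest route is to avoid a chain rule over all $t$ and instead observe that the estimate of the previous paragraph is \emph{already uniform} once we phrase it in terms of the full trace $\gamma[0,T]$: for any $t \le T$, $\bbH\setminus\gamma[0,t] \supseteq \bbH\setminus\gamma[0,T]$, and the worst case for $\dist(z,\partial)$ is controlled by the regularity of the \emph{entire} trace on $[0,T]$, which is exactly what Theorems~\ref{thm:main_hoelder} and \ref{thm:main_sle8} provide with a single random constant $C$ (having finite moments). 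One then only needs the compactness/continuity in $t$ to pass from a dense set of times to all $t \in [0,T]$, which is routine since $t \mapsto \hat f_t'(w)$ is continuous for fixed $w \in \bbH$. For $\kappa = 8$ one plugs in the refined modulus from the remark after Theorem~\ref{thm:main_sle8} to obtain the $(\logp\logp(1/v))(\ell(\logp\logp(1/v)))^{1/2}$ correction term; for $\kappa > 8$ one uses $\varphi(t) = t^\alpha(\logp 1/t)^\beta\ell(\logp 1/t)^\beta$ and the elementary inversion of $\varphi$ to get the exponent $2\alpha-1$ on $v$.

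The main obstacle I expect is the geometric step linking $\dist(\hat f_t(u+iv), \gamma[0,t])$ to the modulus of continuity: one needs that the trace genuinely comes close to every point $z \in \bbH\setminus\gamma[0,t]$ at a \emph{controlled} capacity time, not merely eventually, so that the finiteness of $T$ translates into an upper bound on how far $z$ can be from $\partial$. This requires a quantitative space-filling statement — essentially that a trace segment of diameter $\delta$ occupies capacity time bounded below in terms of $\delta$, which is the content of (the lower-bound direction of) the Hölder-type moduli, combined with a Beurling estimate to control the harmonic measure from $z$ and hence $\Im z$ relative to $v$. Handling the case $\kappa = 8$ is delicate because the modulus is only logarithmic, so the translation "diameter $\delta$ $\Rightarrow$ time $\gtrsim$ something" becomes "diameter $\delta$ $\Rightarrow$ time $\gtrsim \exp(-\delta^{-4-\varepsilon})$", and one has to be careful that the accumulated error terms (iterated logs, the $\ell$ factor) are tracked correctly through the inversion.
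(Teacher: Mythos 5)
Your route -- deducing the theorem from the trace moduli (\cref{thm:main_hoelder}, \cref{thm:main_sle8}) via Koebe distortion -- is genuinely different from the paper's, which proves the statement directly from the forward Loewner flow (\cref{le:fw_grid_all} and \cref{le:unif_map_bound}) together with the radial Bessel clock estimates of \cref{se:bessel}; in the paper the uniformising-map bound is proved in parallel to, not downstream of, the trace results. The Koebe step in your proposal is fine: $\abs{\hat f_t'(u+iv)} \asymp \dist(\hat f_t(u+iv),\partial H_t)/v$, so the theorem is equivalent to $\dist(\hat f_t(u+iv),\partial H_t)\lesssim v\,h(v)$ uniformly in $u$ and $t$. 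The genuine gap is the step where you claim the trace modulus yields this bound. Writing $z=\hat f_t(u+iv)$ and $\rho=\dist(z,\partial H_t)$, what the modulus of continuity gives is a \emph{lower} bound on the remaining capacity time: the space-filling curve must travel from $\partial H_t$ to $z$, so $T(z)-t\gtrsim\varphi^{-1}(\rho/C)$. To convert this into $\rho\lesssim\varphi(v^2)$ you would need the complementary estimate that the relevant traversal is completed within capacity time $O(v^2)$; but the only deterministic link between $v=Y_t(z)$ and the swallowing time goes the other way, $Y_t(z)^2\lesssim T(z)-t$ by \eqref{eq:hcap_bound}, and for $\abs{u}\gg v$ the time to reach and swallow $z$ is typically far larger than $v^2$. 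Your sketch in fact only invokes ``the total capacity time is $T$'', which at best bounds $\rho$ by a constant and produces no $v$-dependence at all -- yet the decay rate in $v$ is the entire content of the theorem; the phrase ``running this more carefully gives $\dist\lesssim v\,h(v)$'' is exactly where the proof is missing. The converse relation you would need (capacity increment $\asymp v^2$ while the curve is near the relevant point) is available, via \cite[Proposition 3.10]{vl-sle-hoelder} and \cref{rm:crad_vs_path}, only for points directly above the tip, i.e.\ $u=0$; this is precisely why trace regularity controls $\hat f_t'(iv)$ but not $\hat f_t'(u+iv)$ for general $u$, and why the paper introduces the separate grid lemma \cref{le:fw_grid_all} and redoes the stochastic estimate in \cref{se:unif_map} (with fixed times $s$ rather than the stopping times $S_n$, exploiting the monotonicity of $s\mapsto Y_{\sigma(s,z)}(z)^{-1}$ when $\kappa\ge 8$).

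A secondary issue: your uniformity-in-$t$ paragraph does not do the intended work. The inclusion $\bbH\setminus\gamma[0,t]\supseteq\bbH\setminus\gamma[0,T]$ concerns the domains only, while the point $z=\hat f_t(w)$ itself moves with $t$, so ``the worst case is controlled by the regularity of the entire trace'' is not a valid reduction. The difficulty was never passing from fixed $t$ to all $t$ (routine once one has a bound with a single random constant, as you say), but establishing the fixed-$t$, general-$u$ bound in the first place, and that is where the argument stops short.
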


We also show that the random factors $C$ have finite moments, see \cref{se:unif_map} for precise statements.

\begin{remark}
In our result, only the case $\kappa \ge 8$ is relevant. In the case $\kappa \le 4$, the \slek{} trace is simple, so the uniform-in-time regularity is the same as the regularity at a given time. In the case $4 < \kappa < 8$, the \slek{} trace swallows points, hence the regularity of the uniformising map is not uniformly bounded in time.
\end{remark}

\begin{remark}
That the regularity of the trace in half-plane capacity parametrisation is linked to the regularity of the uniformising maps is not a coincidence. Typically, the increase of half-plane capacity $\hcap(\gamma[0,t])-\hcap(\gamma[0,s])$ is approximately $v^2$ where $v=\operatorname{height}(\hat f_s^{-1}(\gamma[s,t]))$. On the other hand, picking a point $u+iv \in \hat f_s^{-1}(\gamma[s,t])$, the diameter of $\gamma[s,t]$ is typically of order $v\abs{\hat f_s'(iv)}$. This means that (ignoring large distortions near the boundary) typically $\diam(\gamma[s,t]) \approx \abs{t-s}^{1/2}\abs{\hat f_s'(i\abs{t-s}^{1/2})}$.
\end{remark}

Existing proofs of SLE regularity in the literature commonly analyse the backward SLE flow. Our argument, in contrast, analyses the forward SLE flow (it is also used in \cite{msy-le-semimartingalex}). The key observation is \cref{thm:fw_grid} which allows us to estimate the derivative of the uniformising map by considering the forward flows started from a well chosen grid of points. This allows us to pick suitable stopping times when information on our processes is needed. As a consequence, all our estimates are uniform in time. This turns out also advantageous in the setup of \cite{ps-le-levyx}.

Another ingredient of our proofs is that we parametrise the flow by conformal radius. This parametrisation has been used e.g.\@ in \cite{lw-sle-greens} to study the distance of points to the trace. As we will recall in \cref{se:prelim_general}, increments are naturally bounded by certain conformal radii. Arranging the points by conformal radius reduces redundancies in our bounds. Another feature of this parametrisation is that an important quantity, the angle process of the flow, becomes a radial Bessel process which is easy to analyse.

The idea goes roughly as follows: As we will recall in \cref{rm:crad_vs_path}, estimating increments $\abs{\gamma(t)-\gamma(s)}$ essentially boils down to estimating $\crad(z,H_s) \asymp \dist(z,\gamma[0,s])$ where $z = \hat f_s(iv)$ and $v = \abs{t-s}^{1/2}$. We reverse this procedure by fixing $z$ and running time until $\crad(z,H_s) = \delta$. This tells us that for some $s<t$ we have $\abs{\gamma(t)-\gamma(s)} \approx \delta$. Then we estimate $v = \Im g_s(z)$ to find out $\abs{t-s} \approx v^2$.

It may look at first sight that considering a grid of starting points seems to take too much unnecessary information into account, but in fact all information is being used. Indeed, a positive portion of points in a bounded set do get close to the curve. Each of those points are either ``missed'' by the curve or come close to the tip at some point. The probability of ``missing'' is being considered as the probability of a radial Bessel process hitting the boundary before a given time. For the points that come close to the tip, the information on their one-point martingales are relevant for the regularity of the trace.

It is reasonable that the approach in this paper can also be used to prove (and maybe improve) results in the case of (more regular) deterministic driving functions, analysed e.g.\@ in \cite{rtz-le-slit, fs-le-rp, stw-finite-var}.

In \cref{se:preliminaries}, we summarise a few preliminaries and basic results. We introduce generalised variation, (chordal) Loewner chains and SLE, and collect a few results on radial Bessel processes that we use later. In \cref{se:pf_warmup}, we explain the main idea behind our proofs by showing \slek{}, $\kappa\neq 8$, has a continuous trace. In \cref{se:pf_setup}, we discuss the setup for our later proofs. In the \cref{se:gvar_pf,se:hoelder_pf}, we prove our main \cref{thm:main_psivar,thm:main_hoelder}. In \cref{se:sle8} we explain how apply our arguments to SLE$_8$ and reprove \cref{thm:main_sle8}. Finally, in \cref{se:unif_map}, we prove \cref{th:main_unif_map}.

\subsection*{Notation}

Throughout the paper, we denote by $\bbH$ the upper half-plane $\{z \in \bbC\mid \Im z > 0\}$, and by $\barH$ the closed upper half-plane $\{z \in \bbC\mid \Im z \ge 0\}$. We write $B(z,r)$ for the open ball with radius $r>0$ around $z$. We denote conformal radius by $\crad$ and half-plane capacity by $\hcap$.

We write $x_+ = x \vee 0$ and $\logp(x) = (\log x) \vee 1$. We write $a \lesssim b$ meaning $a \le cb$ for some constant $c < \infty$ that may depend on the context, and $a \asymp b$ meaning $a \lesssim b$ and $b \lesssim a$. Moreover, we write $a \asymp_c b$ to state explicitly the constant $c$.

\medskip
\textbf{Acknowledgements:}
I acknowledge partial support from European Research Council through Consolidator Grant 683164 (PI: Peter Friz) during the stage at TU Berlin where the first preprint was completed, and Starting Grant 804166 (SPRS; PI: Jason Miller) during the stage at U Cambridge where the paper was revised and extended. I thank Peter Friz, Nina Holden, and Konstantinos Kavvadias for helpful discussions and comments. I thank the referees for their careful reading and their comments on an earlier version of the paper.

\section{Preliminaries}
\label{se:preliminaries}

\subsection{Generalised variation}
\label{se:prelim_psi_var}

We summarise the most important facts about $\psi$-variation. See e.g.\@ \cite[Section 5.4]{fv-rp-book} for more details.

Let $\psi\colon {[0,\infty[} \to {[0,\infty[}$ be a homeomorphism. Let $I \subseteq \bbR$ be an interval and $x\colon I \to E$ a function with values in a metric space $(E,d)$. Define
\[ V^M_{\psi;I}(x) = \sup \sum_j \psi\left(\frac{d(x(t_{j+1}),x(t_j))}{M}\right) \]
where the supremum is taken with respect to all finite subsets $\{t_0 < t_1 < ... < t_r\}$ of $I$. Usually $V^1_{\psi;I}(x)$ is called the total $\psi$-variation of $x$ on $I$.

The $\psi$-variation constant of $x$ is defined as
\[ [x]_{\psi\text{-var};I} = \inf\{ M > 0 \mid V^M_{\psi;I}(x) \le 1 \} . \]
In case $E$ is a normed space and $\psi$ is convex, this defines a semi-norm.
Note that in case $\psi(x) = x^p$, this agrees with the notion of $p$-variation.

We say that $\psi$ satisfies the condition ($\Delta_c$) if\footnote{This condition is a bit stronger than what is necessary, but will suffice for our purposes.}
\begin{itemize}
\item for any $c > 0$ there exists $\Delta_c > 0$ such that $\psi(cx) \le \Delta_c\psi(x)$ for all $x$,
\item $\lim_{c \searrow 0} \Delta_c = 0$.
\end{itemize}
If $\psi$ satisfies the condition ($\Delta_c$), we have $[x]_{\psi\text{-var};I} < \infty$ if and only if $V^1_{\psi;I}(x) < \infty$, and in that case $V^M_{\psi;I}(x) < \infty$ for any $M$.

If $x$ is continuous and $V^1_{\psi;[a,b]}(x) < \infty$, then the function $t \mapsto V^1_{\psi;[a,t]}(x)$ is continuous on $[a,b]$ (cf.\@ \cite[2.14]{LO73}). In particular, it can be parametrised by $\psi$-variation, i.e.\@ $V^1_{\psi;[a,t]}(x) = t-a$. In that parametrisation, it has the following modulus of continuity
\[ d(x(t),x(s)) \le \psi^{-1}(t-s) . \]

We will later consider the following function $\psi$. Let $p \ge 1$, $q \ge 0$. Fix any $x_0 \in {]0,1[}$. We define
\begin{equation}\label{eq:psi_pq}
\psi(x) = \psi_{p,q}(x) = 
\begin{cases}
x^p (\log\frac{1}{x})^{-q} & \text{ for } x \le x_0,\\
\bigg( \psi(x_0)^{1/p}+(\psi^{1/p})'(x_0)(x-x_0) \bigg)^p & \text{ for } x > x_0.
\end{cases}
\end{equation}
The advantage of this choice of $\psi$ is that it is convex on all $\bbR$. Note that $\psi(x) \asymp x^p$ for large $x$. Moreover, one can easily check that
\begin{equation}\label{eq:psi_pq_est}
\psi(xy) \lesssim (xy)^p \left(\logp(1/x)\right)^{-q} (\logp(y))^{q}
\end{equation}
for $x,y \ge 0$.

\subsection{Loewner chains: General driving function}
\label{se:prelim_general}

We briefly summarise the basics on (chordal) Loewner chains and SLE that we will use in this paper. More details can be found e.g.\@ in \cite{law-conformal-book, kem-sle-book}. In this subsection we state the results that hold for any continuous driving function $\xi\colon {[0,\infty[} \to \bbR$. In the next subsection we will focus on Brownian driving functions $\xi(t) = \sqrt{\kappa}B_t$.

We consider the forward (chordal) Loewner differential equation
\begin{equation}\label{eq:loewner}
\partial_t g_t(z) = \frac{2}{g_t(z)-\xi(t)}, \quad g_0(z) = z.
\end{equation} 
The solution $g_t(z)$ exists for $t < T(z)$ where $T(z)$ is the first time where the denominator hits $0$. We write
\begin{align*}
K_t &= \{ z \in \barH \mid T(z) \le t\},\\
H_t &= \{ z \in \bbH \mid T(z) > t\} .
\end{align*}
Then $g_t\colon H_t \to \bbH$ is a conformal map, the so-called mapping-out function of $K_t$. The family $(K_t)$ is parametrised by half-plane capacity, meaning $\hcap(K_t) = 2t$. 

The following estimates on the half-plane capacity will be useful (cf.\@ \cite[Lemma 3.4]{vl-sle-hoelder}, \cite{rw-hcap}):
\begin{equation}\label{eq:hcap_bound}
\operatorname{height}(K)^2 \lesssim \hcap(K) \lesssim \diam(K)\operatorname{height}(K) .
\end{equation} 

We say that the Loewner chain driven by $\xi$ has a continuous trace if $\gamma(t) = \linebreak \lim_{v \searrow 0} g_t^{-1}(\xi(t)+iv)$ exists and is a continuous function in $t$. This is equivalent to saying that there exists a continuous $\gamma\colon {[0,\infty[} \to \barH$ such that for each $t \ge 0$ the domain $H_t$ is the unbounded connected component of $\bbH \setminus \gamma[0,t]$. The existence of a trace has been shown for a wide class of driving functions \cite{lind-le-slit, stw-finite-var}, and a.s.\@ for Brownian motion with speed $\kappa$ which gives us \slek{} \cite{rs-sle, lsw-lerw-ust}. (We will give in \cref{se:pf_warmup} another proof in the case $\kappa\neq 8$.)

We write $Z_t(z) = X_t(z)+iY_t(z) = g_t(z)-\xi(t)$. Sometimes, to ease the notation, we will leave out the parameter $z$ when there is no confusion. The equation for $g_t$ rewrites to
\begin{align}
\begin{split}\label{eq:loewner_xy}
dX_t &= \frac{2X_t}{X_t^2+Y_t^2} \, dt -d\xi(t) ,\\
dY_t &= \frac{-2Y_t}{X_t^2+Y_t^2} \, dt.
\end{split}
\end{align}
Then (cf.\@ \cite{rs-sle})
\begin{equation*}
    |g_t'(z)| = \exp \left( -2 \int_0^t \frac{X_s^2-Y_s^2}{(X_s^2+Y_s^2)^2} \, ds \right).
\end{equation*}
Moreover, we write $\hat f_t(w) = g_t^{-1}(w+\xi(t))$.

Some immediate consequences of \eqref{eq:loewner_xy} are: \\
(i) $\Re g_t(z)$ is strictly increasing (resp.\@ decreasing) and $X_t(z) > 0$ (resp.\@ $X_t(z) < 0$) when $\Re z > \sup_t \xi(t)$ (resp.\@ $\Re z < \inf_t \xi(t)$).\\
(ii) We always have $\partial_t Y_t^2 \in [-4,0[$. Moreover, by the Schwarz lemma we always have $\abs{\hat f'_t(w)} \le \frac{\Im \hat f_t(w)}{\Im w} \le \frac{\sqrt{v^2+4t}}{v}$ where $v=\Im w$.

For $z \in \bbH$ we let $\Upsilon_t(z) = \frac{Y_t(z)}{\abs{g_t'(z)}} = \frac{1}{2}\crad(z,H_t)$ where $\crad$ denotes conformal radius. We have
\[ \partial_t \Upsilon_t = \frac{-4Y_t^2}{(X_t^2+Y_t^2)^2}\Upsilon_t . \]
The parametrisation by conformal radius (introduced in \cite{lw-sle-greens}) is defined via
\[ \sigma(s) = \sigma(s,z) = \inf\{t \ge 0 \mid \Upsilon_t(z) = e^{-4s}\} .\]
Notice that the $s$-parametrisation starts at $s_0(y) \defeq -\frac{1}{4}\log y$, i.e.\@ $\sigma(s_0,z)=0$.
We have the identities
\begin{align}
d\sigma(s) &= Y_{\sigma(s)}^2 \left( 1+\frac{X_{\sigma(s)}^2}{Y_{\sigma(s)}^2}\right)^2 \,ds , \label{eq:sigma_dynamics}\\
\partial_s Y_{\sigma(s)}^2 &= -4(X_{\sigma(s)}^2+Y_{\sigma(s)}^2) = -4Y_{\sigma(s)}^2\left( 1+\frac{X_{\sigma(s)}^2}{Y_{\sigma(s)}^2}\right) , \label{eq:Ysigma_dynamics}\\
d\frac{X_{\sigma(s)}}{Y_{\sigma(s)}} &= 4\frac{X_{\sigma(s)}}{Y_{\sigma(s)}} \left( 1+\frac{X_{\sigma(s)}^2}{Y_{\sigma(s)}^2}\right) \,ds - \frac{1}{Y_{\sigma(s)}} \,d\xi(\sigma(s)) .
\end{align}
Let $\hat\theta(s) = \cot^{-1}\left(\frac{X_{\sigma(s)}}{Y_{\sigma(s)}}\right) \in {]0,\pi[}$. Writing everything in terms of $\hat\theta$ will be convenient in case $\xi(t) = \sqrt{\kappa}B_t$ because $\hat\theta$ will be a radial Bessel process (see \cref{se:prelim_bm}).
We then have
\begin{align*}
Y_{\sigma(s)} &= Y_0 \exp\left( -2 \int_{s_0}^s (\sin\hat\theta_r)^{-2} \, dr \right) \\
&= Y_0 \exp\left( -2(s-s_0)-2 \int_{s_0}^s \cot^2\hat\theta_r \, dr \right) , \\
\abs{g_{\sigma(s)}'(z)} &= \exp\left( 4(s-s_0)-2 \int_{s_0}^s (\sin\hat\theta_r)^{-2} \, dr \right) \\
&= \exp\left( 2(s-s_0)-2 \int_{s_0}^s \cot^2\hat\theta_r \, dr \right) .
\end{align*}

We will frequently use the following estimates. Let $s,t \ge 0$ and $v > 0$. From Koebe's distortion theorem (cf.\@ \cite[Theorem 1.6]{pom-univalent-book}), we see that
\begin{equation}\label{eq:f_int}
\abs{\hat f_t(i2v)-\hat f_t(iv)} \le \int_v^{2v} \abs{\hat f_t'(iu)}\,du \asymp v\abs{\hat f_t'(iv)} = \Upsilon_t(\hat f_t(iv)) .
\end{equation}
Moreover, if $\abs{t-s} \asymp v^2$, by \cite[Lemma 4.5]{fty-regularity-kappa} (which is a restatement of \cite[Lemma 3.5 and 3.2]{vl-sle-hoelder}) we have
\begin{equation} \begin{split}\label{eq:f_diff}
\abs{\hat f_t(iv)-\hat f_s(iv)} 
&\lesssim \abs{\hat f_s'(iv)} \left( \frac{\abs{t-s}}{v} + \abs{\xi(t)-\xi(s)}\left(1+\frac{\abs{\xi(t)-\xi(s)}^2}{v^2}\right)^l \right) \\
&\asymp \Upsilon_s(\hat f_s(iv)) \left( 1 + \left(\frac{\abs{\xi(t)-\xi(s)}^2}{\abs{t-s}} \right)^l \right)
\end{split} \end{equation}
where $l < \infty$ is a universal constant.

\begin{remark}\label{rm:crad_vs_path}
The quantity $\Upsilon_s(\hat f_s(iv))$ plays a major role in estimating the increment $\abs{\gamma(t)-\gamma(s)}$. Indeed, by \cite[Proposition 3.10]{vl-sle-hoelder} the increment is (more or less) lower bounded by $\Upsilon_s(\hat f_s(iv))$ where $v = \abs{t-s}^{1/2}$. For an upper bound, a commonly used estimate is
\[ \abs{\gamma(t)-\gamma(s)} \le \abs{\gamma(t)-\hat f_t(iv)}+\abs{\hat f_t(iv)-\hat f_s(iv)}+\abs{\hat f_s(iv)-\gamma(s)} , \]
and then by \eqref{eq:f_int}
\[ \abs{\gamma(s)-\hat f_s(iv)} \lesssim \sum_m \Upsilon_s(\hat f_s(iv2^{-m}))
 . \]
In case $\Upsilon_s(\hat f_s(iv)) \approx v^{1-\beta}$ for some $\beta < 1$, the upper and lower bound match (up to a factor). Therefore this strategy gives us sharp $p$-variation and Hölder exponents in the case $\kappa\neq 8$, as was proved in \cite{vl-sle-hoelder,ft-sle-regularity}.

For $\kappa=8$, we only have $\sup_s \Upsilon_s(\hat f_s(iv)) \approx (\log 1/v)^{-1/4}$, and the upper bound is too bad to obtain any regularity results. In that case, an improvement is offered by \cite[Lemma 7.1]{kms-sle48x}, see \cref{se:sle8} for more details.
\end{remark}

To estimate the regularity of the trace $\gamma$, we follow the following strategy which has also been used in \cite{msy-le-semimartingalex}. For the reader's convenience, we also restate the proofs of the lemmas below.

As explained above, we want to find an upper bound for $\abs{\hat f_t'(iv)} = \abs{g_t'(\hat f_t(iv))}^{-1}$ where $v > 0$. Observe that $z = \hat f_t(iv)$ is the point where we have to start the flow in order to reach $Z_t = iv$. But since this point depends on the behaviour of $\xi$ in the future time interval $[0,t]$, we would need to consider all possible points $z \in \bbH$ that might reach $iv$ at time $t$. Fortunately, using Koebe's distortion and $1/4$-theorem, we can reduce the problem to starting the flow from a finite number of points. Then the number of points we need to test already encodes information on $\abs{\hat f_t'(iv)}$.

Recall Koebe's distortion estimates and a few consequences.
\begin{lemma}\label{le:distortion}
Let $f\colon \bbH \to \bbC$ be a univalent function, $g = f^{-1}\colon f(\bbH) \to \bbH$, and $w = u+iv \in \bbH$. Then for every $z \in B(f(w),\frac{1}{8}v\abs{f'(w)})$ we have
\[ \abs{g(z)-w} < \frac{v}{2} \quad\text{and}\quad \frac{48}{125} \le \frac{\abs{g'(z)}}{\abs{g'(f(w))}} \le \frac{80}{27} . \]
\end{lemma}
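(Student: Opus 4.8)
The plan is to pull everything back to the unit disk $\bbD$ and apply the classical Koebe growth and distortion theorems together with the $\tfrac14$-theorem. Fix $w = u+iv$ and let $\phi\colon\bbD\to\bbH$ be the Möbius bijection $\phi(\zeta) = w + \frac{2iv\zeta}{1-\zeta}$, which satisfies $\phi(0) = w$ and $\phi'(0) = 2iv$. The first step — and, I expect, the only one requiring any thought — is to produce a round disk about $f(w)$ inside $f(\bbH)$ of the \emph{right} radius. Applying the Koebe $\tfrac14$-theorem to the normalised univalent map $\zeta\mapsto\frac{f(\phi(\zeta))-f(w)}{2ivf'(w)}$ on $\bbD$, whose image is $\frac{f(\bbH)-f(w)}{2ivf'(w)}$, shows that $B\bigl(f(w),\tfrac12 v\abs{f'(w)}\bigr)\subseteq f(\bbH)$. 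Equivalently, this is the $\tfrac14$-theorem in the form $\dist\bigl(f(w),\partial f(\bbH)\bigr)\ge\tfrac14\crad\bigl(f(w),f(\bbH)\bigr)=\tfrac14\cdot 2v\abs{f'(w)}$, using that the conformal radius of $\bbH$ at $u+iv$ equals $2v$.

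Given this inclusion, define $G\colon\bbD\to\bbH$ by $G(\eta) = g\bigl(f(w)+\tfrac12 v\abs{f'(w)}\,\eta\bigr)$; it is univalent (an injective affine rescaling followed by the univalent map $g$), with $G(0) = w$ and $G'(\eta) = \tfrac12 v\abs{f'(w)}\,g'\bigl(f(w)+\tfrac12 v\abs{f'(w)}\,\eta\bigr)$, so in particular $\abs{G'(0)} = \tfrac12 v\abs{f'(w)}\abs{g'(f(w))} = \tfrac v2$. For $z\in B\bigl(f(w),\tfrac18 v\abs{f'(w)}\bigr)$ put $\eta = \frac{z-f(w)}{\frac12 v\abs{f'(w)}}$, so that $\abs{\eta} < \frac{1/8}{1/2} = \tfrac14$ and $G(\eta) = g(z)$; hence $g(z)-w = G(\eta)-G(0)$ and $\frac{g'(z)}{g'(f(w))} = \frac{G'(\eta)}{G'(0)}$. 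Now apply the Koebe theorems to the normalised map $\tilde G = \frac{G-G(0)}{G'(0)}$. The growth theorem, together with $\frac{t}{(1-t)^2}\le\frac{1/4}{(3/4)^2} = \frac49$ for $t\le\tfrac14$, gives $\abs{g(z)-w} = \abs{G'(0)}\abs{\tilde G(\eta)}\le\tfrac v2\cdot\tfrac49 = \tfrac{2v}{9} < \tfrac v2$. The distortion theorem, together with the monotonicity of $t\mapsto\frac{1-t}{(1+t)^3}$ (decreasing) and $t\mapsto\frac{1+t}{(1-t)^3}$ (increasing) on $[0,1)$, gives $\abs{\frac{g'(z)}{g'(f(w))}} = \abs{\tilde G'(\eta)}\in\bigl[\frac{1-1/4}{(1+1/4)^3},\frac{1+1/4}{(1-1/4)^3}\bigr] = \bigl[\frac{48}{125},\frac{80}{27}\bigr]$, which is exactly the claim.

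The only delicate point is the radius $\tfrac12 v\abs{f'(w)}$ in the first step: restricting $f$ merely to the inscribed disk $B(w,v)\subseteq\bbH$ and applying the $\tfrac14$-theorem there would yield only the smaller radius $\tfrac14 v\abs{f'(w)}$, forcing $\abs{\eta} < \tfrac12$ and producing constants worse than $\tfrac{48}{125}$ and $\tfrac{80}{27}$ (the derivative bound would degrade to roughly $[\tfrac{4}{27},12]$). Exploiting the full conformal radius $2v$ of $\bbH$ at $w$ — equivalently, pulling back through a conformal map of $\bbD$ onto all of $\bbH$ rather than onto a round subdisk — is precisely what makes the stated constants come out, and everything else is a routine computation.
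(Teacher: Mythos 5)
Your proof is correct and follows essentially the same route as the paper: both first use Koebe's $\tfrac14$-theorem together with the fact that $\crad(f(w),f(\bbH)) = 2v\abs{f'(w)}$ to obtain $B\bigl(f(w),\tfrac12 v\abs{f'(w)}\bigr)\subseteq f(\bbH)$, and then apply Koebe's distortion theorem to $g$ on that ball at relative radius $\tfrac14$, which yields exactly the constants $\tfrac{48}{125}$ and $\tfrac{80}{27}$. The only (harmless) difference is in the bound $\abs{g(z)-w}<\tfrac v2$: you use the growth theorem for $g$ on the ball (getting the slightly stronger $\tfrac{2v}{9}$), whereas the paper applies the $\tfrac14$-theorem a second time to $f$ restricted to $B(w,v/2)$.
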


\begin{proof}
Note that $v\abs{f'(w)} = \frac{1}{2}\crad(f(w),f(\bbH))$. In particular, from Koebe's $1/4$ theorem we know that $\dist(f(w),\partial f(\bbH)) \ge \frac{1}{4}\crad(f(w),f(\bbH)) = \frac{1}{2}v\abs{f'(w)}$. Another application of Koebe's $1/4$ theorem implies $f(B(w,v/2)) \supseteq B(f(w),\frac{1}{8}v\abs{f'(w)})$.

In particular, every $z \in B(f(w),\frac{1}{8}v\abs{f'(w)})$ satisfies $\abs{f^{-1}(z)-w} < v/2$. We conclude by Koebe's distortion theorem applied on the domain $B(f(w),\frac{1}{2}v\abs{f'(w)})$.
\end{proof}

This motivates to start the Loewner flow from the following set of points
\begin{equation}\label{eq:fw_grid}
H(h,M,T) = \left\{ x+iy \mmiddle| 
\begin{array}{cc} x = \pm hj/8,\ y = h(1+k/8),\ j,k \in \bbN \cup \{0\},\\
\abs{x} \le M,\ y \le \sqrt{1+4T} 
\end{array} \right\} .
\end{equation}
This grid is chosen in such a way that for every $z \in [-M,M] \times [h,\sqrt{1+4T}]$ we have $\dist(z,H(h,M,T)) < \frac{h}{8}$.

The following lemma is purely deterministic and holds for any continuous driving function $\xi$.

\begin{lemma}\label{thm:fw_grid}
Let $v \in {]0,1]}$, $r > 0$, and suppose $|\hat f_t'(iv)| \ge r$ for some $t \in [0,T]$. Then there exists $z \in H(rv,\norm{\xi}_{\infty;[0,T]},T)$ such that
\[ \quad |Z_{t}(z)-iv| \le \frac{v}{2} \quad \text{and} \quad |g_{t}'(z)| \le \frac{80}{27}\,\frac{1}{r} \]
where $H(h,M,T)$ is given by \eqref{eq:fw_grid}.
\end{lemma}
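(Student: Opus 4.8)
The plan is to apply the Koebe-type distortion estimate (\cref{le:distortion}) to the univalent map $f = \hat f_t\colon \bbH \to H_t$ at the point $w = iv$, and then to locate a grid point inside the resulting Koebe ball around $\hat f_t(iv)$. Observe first that $f^{-1}(z) = \hat f_t^{-1}(z) = g_t(z) - \xi(t) = Z_t(z)$ and $(f^{-1})'(z) = g_t'(z)$, and that the inverse function rule together with the hypothesis gives $\abs{g_t'(\hat f_t(iv))} = \abs{\hat f_t'(iv)}^{-1} \le 1/r$. Hence \cref{le:distortion} tells us directly that every $z$ in the ball $B(\hat f_t(iv), \tfrac18 v\abs{\hat f_t'(iv)})$ satisfies $\abs{Z_t(z) - iv} < v/2$ and
\[ \abs{g_t'(z)} \le \tfrac{80}{27}\abs{g_t'(\hat f_t(iv))} \le \tfrac{80}{27}\cdot\tfrac1r . \]
It therefore only remains to exhibit such a $z$ inside the grid $H(rv, \norm{\xi}_{\infty;[0,T]}, T)$.

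Since $\abs{\hat f_t'(iv)} \ge r$, we have $B(\hat f_t(iv), \tfrac18 rv) \subseteq B(\hat f_t(iv), \tfrac18 v\abs{\hat f_t'(iv)})$, so by the density property noted right after \eqref{eq:fw_grid} --- the grid $H(h, M, T)$ with $h = rv$ and $M = \norm{\xi}_{\infty;[0,T]}$ has every point of $[-M,M]\times[rv, \sqrt{1+4T}]$ within distance $h/8 = rv/8$ of it --- it suffices to check that $\hat f_t(iv)$ lies in the box $[-M, M]\times[rv, \sqrt{1+4T}]$. For the lower bound on the imaginary part, the Schwarz-lemma estimate from property (ii), $\abs{\hat f_t'(iv)} \le \Im\hat f_t(iv)/v$, gives $\Im\hat f_t(iv) \ge v\abs{\hat f_t'(iv)} \ge rv$. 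For the upper bound, writing $z = \hat f_t(iv)$ so that $Y_t(z) = v$, the estimate $\partial_s Y_s(z)^2 \ge -4$ from property (ii) yields $(\Im\hat f_t(iv))^2 = Y_0(z)^2 \le v^2 + 4t \le 1 + 4T$, using $v \le 1$ and $t \le T$. Finally, for the real part, suppose $\Re z > \sup_{s\in[0,t]}\xi(s)$; then by property (i) the process $X_s(z)$ stays strictly positive on $[0,t]$, contradicting $X_t(z) = \Re(iv) = 0$; the symmetric argument rules out $\Re z < \inf_{s\in[0,t]}\xi(s)$, so $\abs{\Re\hat f_t(iv)} \le \norm{\xi}_{\infty;[0,T]} = M$. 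Combining these, $\hat f_t(iv) \in [-M,M]\times[rv,\sqrt{1+4T}]$, so there is a grid point $z \in H(rv, M, T)$ with $\abs{z - \hat f_t(iv)} < rv/8$, and by the first paragraph this $z$ is as required.

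The argument is essentially bookkeeping, and I do not expect a real obstacle. The only points needing a little care are the correct identification $\hat f_t^{-1} = Z_t$ and $(\hat f_t^{-1})' = g_t'$ (so that the distortion bound on $g' = g_t'$ in \cref{le:distortion} transfers to the claimed bound on $g_t'$), and checking that $\hat f_t(iv)$ falls within the part of $\bbH$ covered by the grid --- in particular that the monotonicity in property (i) is taken over $[0,t] \subseteq [0,T]$, so that $M = \norm{\xi}_{\infty;[0,T]}$ is indeed the right width, and that $v \le 1$, $t \le T$ are used to fit the imaginary part under $\sqrt{1+4T}$.
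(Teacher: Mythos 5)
Your proof is correct and follows essentially the same route as the paper: take $z_* = \hat f_t(iv)$, verify via the Loewner equation and the Schwarz lemma that it lies in $[-M,M]\times[rv,\sqrt{1+4T}]$, pick a grid point within $rv/8 \le \frac18 v\abs{\hat f_t'(iv)}$ of it, and conclude with \cref{le:distortion}. You have merely spelled out the box-membership and inverse-map identifications that the paper leaves to the comments below \eqref{eq:loewner_xy}.
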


\begin{remark}
For later reference, let us note here that the condition $|Z_{t}(z)-iv| \le v/2$ implies in particular
\[
Y_t(z) \in [v/2, 3v/2] \quad \text{and} \quad \abs*{\frac{X_t(z)}{Y_t(z)}} \le 1 .
\]
\end{remark}

\begin{proof}
Surely, there is $z_*=\hat f_t(iv)$ which by definition satisfies everything, but the claim is that we can choose $z$ from the grid $H(rv,M,T)$. Indeed, the grid is just defined so that there always exists some $z \in H(rv,M,T)$ with $\abs{z-z_*} \le \frac{1}{8}rv$ provided that $z_* \in [-M,M] \times [rv,\sqrt{1+4T}]$. By \cref{le:distortion}, such $z$ satisfies the desired properties.

The fact that $z_* \in [-M,M] \times [rv,\sqrt{1+4T}]$ just comes from the Loewner equation (for the upper bounds) and from the Schwarz lemma (for the lower bound); see the comments below \eqref{eq:loewner_xy}.
\end{proof}

We will use in \cref{se:unif_map} a variant of \cref{thm:fw_grid}.

\begin{lemma}\label{le:fw_grid_all}
Let $v \in {]0,1]}$, $r > 12$, and suppose $|\hat f_t'(u+iv)| \ge r$ for some $u \in \bbR$, $t \in [0,T]$. Then there exists $z \in H(rv, 2\norm{\xi}_{\infty;[0,T]}+4\sqrt{T}, T)$ such that
\[ \quad |Z_{t}(z)-(u+iv)| \le \frac{v}{2} \quad \text{and} \quad |g_{t}'(z)| \le \frac{80}{27}\,\frac{1}{r} \]
where $H(h,M,T)$ is given by \eqref{eq:fw_grid}.
\end{lemma}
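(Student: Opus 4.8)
The plan is to follow the proof of \cref{thm:fw_grid} essentially verbatim; the only genuinely new point is an a priori bound on the real part of the point one has to start the flow from. As there, set $z_* \defeq \hat f_t(u+iv) = g_t^{-1}(u+\xi(t)+iv) \in H_t$, so that $\hat f_t'(u+iv) = g_t'(z_*)^{-1}$. Once we know that $z_*$ lies in the rectangle $[-M,M] \times [rv, \sqrt{1+4T}]$ with $M = 2\norm{\xi}_{\infty;[0,T]}+4\sqrt{T}$, the grid $H(rv,M,T)$ (by its defining property stated below \eqref{eq:fw_grid}) contains a point $z$ with $\abs{z-z_*} < \tfrac18 rv \le \tfrac18 v\abs{\hat f_t'(u+iv)}$, and \cref{le:distortion} applied to the univalent map $f = \hat f_t$ with $w = u+iv$ (note $\hat f_t^{-1} = Z_t$, hence $(\hat f_t^{-1})'(z_*) = g_t'(z_*)$) gives $\abs{Z_t(z)-(u+iv)} < v/2$ and $\abs{g_t'(z)} \le \tfrac{80}{27}\abs{g_t'(z_*)} = \tfrac{80}{27}\abs{\hat f_t'(u+iv)}^{-1} \le \tfrac{80}{27}\tfrac1r$, as claimed. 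So everything reduces to locating $z_*$ in that rectangle.

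For the imaginary part this is immediate from the facts collected after \eqref{eq:loewner_xy}: the Schwarz-lemma bound $\abs{\hat f_t'(w)} \le \Im\hat f_t(w)/\Im w$ gives on the one hand $\Im z_* = \Im\hat f_t(u+iv) \le \sqrt{v^2+4t} \le \sqrt{1+4T}$, and on the other hand $\Im z_* \ge v\abs{\hat f_t'(u+iv)} \ge rv$.

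The bound $\abs{\Re z_*} \le M$ is the new ingredient, and I would prove it by contradiction using only the Loewner ODE. Suppose $\Re z_* > \norm{\xi}_{\infty;[0,T]}+2\sqrt{T}$ (the case $\Re z_* < -\norm{\xi}_{\infty;[0,T]}-2\sqrt{T}$ is symmetric, and if neither occurs then $\abs{\Re z_*} \le \norm{\xi}_{\infty;[0,T]}+2\sqrt{T} \le M$ already). Since then $\Re z_* > \sup_{[0,T]}\xi$, property (i) after \eqref{eq:loewner_xy} gives that $s \mapsto \Re g_s(z_*)$ is increasing on $[0,t]$, hence $X_s(z_*) = \Re g_s(z_*)-\xi(s) \ge \Re z_* - \norm{\xi}_{\infty;[0,T]} =: \rho > 2\sqrt{T} \ge 2\sqrt{t}$ for all $s \le t$. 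Using $\abs{X_s^2-Y_s^2} \le X_s^2+Y_s^2$ and $X_s^2+Y_s^2 \ge \rho^2$ in the formula for $\abs{g_t'}$ after \eqref{eq:loewner_xy},
\[
\bigl\lvert \log\abs{g_t'(z_*)} \bigr\rvert = 2\int_0^t \frac{\abs{X_s^2-Y_s^2}}{(X_s^2+Y_s^2)^2}\,ds \le 2\int_0^t \frac{ds}{X_s^2+Y_s^2} \le \frac{2t}{\rho^2} < \frac12 ,
\]
so $\abs{\hat f_t'(u+iv)} = \abs{g_t'(z_*)}^{-1} < e^{1/2} < 12 < r$, contradicting the hypothesis. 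Thus $\abs{\Re z_*} \le \norm{\xi}_{\infty;[0,T]}+2\sqrt{T} \le M$, and together with the previous paragraph we obtain $z_* \in [-M,M]\times[rv,\sqrt{1+4T}]$.

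I do not expect any serious obstacle here: the only real idea is the last display --- that far from the hull the forward flow is close to a translation, so its derivative cannot be large --- and the rest is the bookkeeping already performed for \cref{thm:fw_grid}. The only mild care needed is to use the correct supremum of $\xi$ and to note that $z_* \in H_t$, so that $g_s(z_*)$ is defined for all $s \in [0,t]$; both are automatic from $z_* = \hat f_t(u+iv)$. (The hypothesis $r > 12$ is far more than this argument uses; any $r$ larger than an absolute constant would do.)
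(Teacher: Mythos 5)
Your proof is correct, and its skeleton (reduce to locating $z_*=\hat f_t(u+iv)$ in the rectangle $[-M,M]\times[rv,\sqrt{1+4T}]$, then invoke the grid's covering property and \cref{le:distortion}) is the same as the paper's; but the one genuinely new ingredient of this lemma --- ruling out that $z_*$ lies far from the hull --- is handled by a different argument. The paper notes $K_T \subseteq [\pm\norm{\xi}_{\infty;[0,T]}]\times[0,2\sqrt{T}]$ and applies Koebe's distortion theorem to $z \mapsto 1/g_t(1/z)$, obtaining $\frac{1}{12}\le\abs{g_t'(z)}\le\frac{27}{4}$ for $\abs{z} > 2\norm{\xi}_{\infty;[0,T]}+4\sqrt{T}$; such points are then excluded precisely by the hypothesis $r>12$, which explains that constant. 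You instead work directly with the forward flow: monotonicity of $\Re g_s(z_*)$ when $\Re z_*$ exceeds the range of the driver gives $X_s^2+Y_s^2\ge\rho^2>4t$, and the explicit integral formula for $\log\abs{g_t'}$ then yields $\abs{\hat f_t'(u+iv)} < e^{1/2}$, contradicting $\abs{\hat f_t'(u+iv)}\ge r$. Your route is more elementary (no exterior distortion estimate), gives a bound on $\Re z_*$ rather than on $\abs{z_*}$, and shows the threshold $12$ could be relaxed to any constant exceeding $e^{1/2}$ (with the same $M$); the paper's route is shorter, being a one-line appeal to a standard Koebe estimate, and its constant $12$ is exactly calibrated to that estimate. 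One cosmetic point: in your display the first ``$=$'' should be ``$\le$'', since moving the absolute value inside the integral is an inequality; this does not affect the bound.
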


\begin{proof}
The proof is the same except that this time we need to justify $\hat f_t(u+iv) \in [\pm (2\norm{\xi}_{\infty;[0,T]}+4\sqrt{T})] \times [rv,\sqrt{1+4T}]$. The Loewner differential equation implies $K_T \subseteq [\pm\norm{\xi}_{\infty;[0,T]}] \times [0,2\sqrt{T}]$ (cf.\@ the comment below \eqref{eq:loewner_xy}). Applying Koebe's distortion theorem to the map $z \mapsto 1/g_t(1/z)$ we see that if $\abs{z} > 2\norm{\xi}_{\infty;[0,T]}+4\sqrt{T}$, then
\[ \frac{1}{12} \le \abs{g_t'(z)} \le \frac{27}{4} . \]
These points are excluded by the condition $r > 12$.
\end{proof}

We will later need to sum up certain expressions on the grid $H(h,M,T)$. We state here the calculation that we will use later.
\begin{lemma}\label{le:fw_sum_grid}
Let $a,\zeta \in \bbR$, $M,T > 0$, $h \in {]0,1]}$. Then there exists $C < \infty$ depending on $a,\zeta,M,T$ such that
\[ \sum_{z \in H(h,M,T)} y^\zeta(1+x^2/y^2)^{-a/2} \le C
\begin{cases}
h^\zeta & \text{if } a > 1,\ \zeta+1 < -1,\\
h^{-2}\logp(h^{-1}) & \text{if } a > 1,\ \zeta+1 = -1,\\
h^{-2} & \text{if } a > 1,\ \zeta+1 > -1,\\[5pt]

h^\zeta \logp(h^{-1}) & \text{if } a = 1,\ \zeta+1 < -1,\\
h^{-2}\logp(h^{-1})^2 & \text{if } a = 1,\ \zeta+1 = -1,\\
h^{-2} & \text{if } a = 1,\ \zeta+1 > -1,\\[5pt]

h^{\zeta+a-1} & \text{if } a < 1,\ \zeta+a < -1,\\
h^{-2}\logp(h^{-1}) & \text{if } a < 1,\ \zeta+a = -1,\\
h^{-2} & \text{if } a < 1,\ \zeta+a > -1.
\end{cases} \]
\end{lemma}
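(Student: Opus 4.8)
The plan is to reduce the sum over the two-dimensional grid $H(h,M,T)$ to a one-dimensional sum (over the $y$-coordinates) by first summing out the $x$-coordinates, and then to compare the resulting one-dimensional sum with an integral. Recall that the points of $H(h,M,T)$ have the form $x = \pm hj/8$, $y = h(1+k/8)$ with $j,k \ge 0$ integers, $|x| \le M$, $y \le \sqrt{1+4T}$. So for a fixed admissible value of $y$ (there are at most $O(h^{-1})$ of them, but that crude bound is not what we want — we keep the sum over $y$), the inner sum over $x$ is
\[
\sideset{}{}\sum_{\substack{x = \pm hj/8 \\ |x| \le M}} \left(1+\frac{x^2}{y^2}\right)^{-a/2}
\asymp \frac{1}{h}\int_{0}^{M} \left(1+\frac{t^2}{y^2}\right)^{-a/2}\,dt ,
\]
since consecutive grid values of $x$ are spaced $h/8$ apart and the summand is monotone in $|x|$, so the sum is comparable to $(8/h)$ times the integral (plus the $j=0$ term, which is $\le 1$ and absorbed). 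The integral $\int_0^M (1+t^2/y^2)^{-a/2}\,dt$ is, by the substitution $t = ys$, equal to $y\int_0^{M/y}(1+s^2)^{-a/2}\,ds$. Now I distinguish the three regimes for $a$: if $a>1$ the integral $\int_0^\infty(1+s^2)^{-a/2}\,ds$ converges, so the $x$-sum is $\asymp y/h$; if $a=1$ we get $y\log(1+ (M/y)) \asymp y\logp(1/y)$ (using $y \le \sqrt{1+4T}$, so $M/y \gtrsim 1$); if $a<1$ the integral behaves like $(M/y)^{1-a}$, so the $x$-sum is $\asymp y^a h^{-1} M^{1-a} \asymp y^a/h$ up to the $T$-dependent constant.

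Having carried out the $x$-summation, the remaining sum is over $y = h(1+k/8) \in [h, \sqrt{1+4T}]$, and in each of the three cases it has (up to constants depending on $a,M,T$) the shape $h^{-1}\sum_y y^{\eta}$ or $h^{-1}\sum_y y^{\eta}\logp(1/y)$, where $\eta = \zeta+1$ when $a \ge 1$ (with an extra $\logp(1/y)$ factor when $a=1$) and $\eta = \zeta+a$ when $a<1$. Again comparing with an integral: the $y$-values are spaced $h/8$ apart over $[h,\sqrt{1+4T}]$ and the summand $y^\eta(\logp(1/y))^{j}$ (for $j \in \{0,1\}$, resp.\ $j\in\{0,1,2\}$ when $a=1$) is monotone on $(0,1)$, so
\[
\sideset{}{}\sum_{\substack{y = h(1+k/8) \\ h \le y \le \sqrt{1+4T}}} y^{\eta}\left(\logp\tfrac{1}{y}\right)^{j} \asymp \frac{1}{h}\int_{h}^{\sqrt{1+4T}} t^{\eta}\left(\logp\tfrac{1}{t}\right)^{j}\,dt + \text{(boundary terms)} ,
\]
where the boundary term $y=h$ contributes $\asymp h^{\eta}(\logp(1/h))^{j}$. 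The integral $\int_h^{\sqrt{1+4T}} t^\eta(\logp(1/t))^j\,dt$ is then evaluated by elementary calculus: for $\eta > -1$ it converges as $h\to 0$ to a constant (depending on $T$), for $\eta = -1$ it is $\asymp (\logp(1/h))^{j+1}$, and for $\eta < -1$ it is $\asymp h^{\eta+1}(\logp(1/h))^{j}$ (integration by parts absorbs the logarithm into a constant multiple when $\eta \ne -1$, since $\int h^\eta (\log 1/h)^j \sim \frac{h^{\eta+1}}{-\eta-1}(\log 1/h)^j$ as $h\to 0$). Combining the prefactor $h^{-1}$ with these estimates, and noting that the dominant contribution is $h^{-1}$ times the larger of the integral and the $h$-boundary term, yields in each of the nine cases exactly the stated bound; e.g.\ when $a>1$, $\eta = \zeta+1$: if $\zeta+1 < -1$ the $h$-boundary term $h^{\zeta+1}$ dominates the integral's $h^{\zeta+2}$, giving $h^{-1}\cdot h^{\zeta+1} = h^\zeta$; if $\zeta+1=-1$, we get $h^{-1}\logp(h^{-1})^{1} = h^{-2}\logp(h^{-1})$ — wait, here one must be careful: $h^{-1}\cdot h^{\eta+1}$-type bookkeeping gives $h^{-1}\cdot h^{0}\logp = h^{-1}\logp$, but the claimed bound is $h^{-2}\logp(h^{-1})$, so the extra $h^{-1}$ must come from the $x$-sum prefactor being $y/h$ rather than $1/h$ when $a>1$ — indeed the $x$-sum contributed a factor $y h^{-1}$, i.e.\ $\eta$ should be read as $\zeta+1$ only after including that $y$, so the $h^{-1}$ from the $x$-sum and $h^{-1}$ from the $y$-sum combine. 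I would simply track the powers of $h$ carefully through both summations; the three cases $a>1$, $a=1$, $a<1$ differ only in whether the $x$-sum prefactor is $yh^{-1}$, $y\logp(1/y)h^{-1}$, or $y^a h^{-1}$.

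The main obstacle is purely bookkeeping: keeping the powers of $h$ and the number of logarithmic factors straight across the two nested sum-to-integral comparisons, and correctly identifying in each of the nine cases whether the dominant term is the integral's value at the lower endpoint $h$, the integral's (convergent) bulk, or the single boundary term from $y=h$. There is no analytic difficulty — every comparison is justified by monotonicity of the summands on the relevant ranges plus the uniform spacing $h/8$ of the grid — and the constant $C$ is allowed to depend on $a$, $\zeta$, $M$, $T$, which absorbs all the $M$- and $T$-dependent factors (such as $M^{1-a}$, $(1+4T)^{(\eta+1)/2}$, and $\int_0^\infty(1+s^2)^{-a/2}\,ds$ for $a>1$) as well as the finitely many implied constants $8$, $48/125$, etc. I would present the argument as: (1) state the two comparison estimates (sum $\asymp h^{-1}\cdot$ integral $+$ lower-endpoint term), (2) do the $x$-integral in the three $a$-regimes, (3) do the $y$-integral in the three $\eta$-regimes, (4) tabulate the nine resulting products, matching each to the claimed right-hand side.
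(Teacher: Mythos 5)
Your proposal is correct and follows essentially the same route as the paper: sum out the $x$-coordinates first (comparing to $\int_0^{M}(1+t^2/y^2)^{-a/2}\,dt$, which splits into the three regimes $a>1$, $a=1$, $a<1$), then sum the resulting $y$-powers (with the possible extra logarithm) against an integral over $[h,\sqrt{1+4T}]$, tracking the lower-endpoint contribution. The bookkeeping you outline resolves correctly in all nine cases and matches the paper's computation.
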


\begin{remark}
The constant $C$ depends on $M,T$ polynomially.
\end{remark}

\begin{remark}\label{rm:sum_grid_capped}
Suppose we only sum over $z$ with $y=\Im z \ge \varepsilon$. In case $a>1$, $\zeta+2<0$, we then get $\sum \le C\varepsilon^{\zeta+2}h^{-2}$. Analogous statements hold in the other cases.
\end{remark}

\begin{proof}
For simplicity, we can write $x_j = hj$, $y_k = hk$ where $j = -\lceil M h^{-1} \rceil,...,\lceil M h^{-1} \rceil$ and $k = 1,...,\lceil\sqrt{1+T}h^{-1}\rceil$. (The additional factors do not matter and will be absorbed in the final constant $C$.)

We have
\[ y_k^\zeta(1+x_j^2/y_k^2)^{-a/2} = (hk)^\zeta (1+j^2/k^2)^{-a/2} . \]
We first sum in $j$.
\begin{align*}
    \sum_{j \le Mh^{-1}} (1+j^2/k^2)^{-a/2} 
    &\asymp \int_0^{Mh^{-1}} (1+j^2/k^2)^{-a/2} \, dj \\
    &= \int_0^{Mh^{-1}/k} (1+j'^2)^{-a/2}k \, dj' \\
    &\asymp \begin{cases}
    k & \text{if } a>1,\\
    k\logp(\frac{M}{hk}) & \text{if } a=1,\\
    h^{a-1}k^{a} & \text{if } a<1.
    \end{cases}
\end{align*} 
We then sum in $k$. In case $a>1$ we have
\[
    \sum_{k=1}^{\sqrt{1+T}h^{-1}} (hk)^\zeta k 
    \asymp \begin{cases}
    h^\zeta & \text{if } \zeta+1 < -1,\\
    h^{-2}\logp(\sqrt{1+T}h^{-1}) & \text{if } \zeta+1 = -1,\\
    h^{-2} & \text{if } \zeta+1 > -1.
    \end{cases}
\]
In case $a<1$ we have
\[
    \sum_{k=1}^{\sqrt{1+T}h^{-1}} (hk)^\zeta h^{a-1}k^{a} 
    \asymp \begin{cases}
    h^{\zeta+a-1} & \text{if } \zeta+a < -1,\\
    h^{-2}\logp(\sqrt{1+T}h^{-1}) & \text{if } \zeta+a = -1,\\
    h^{-2} & \text{if } \zeta+a > -1.
    \end{cases}
\]
In case $a=1$ we have
\[
    \sum_{k=1}^{\sqrt{1+T}h^{-1}} (hk)^\zeta k\logp(\frac{M}{hk}) 
    \asymp \begin{cases}
    h^\zeta \logp(\frac{M}{h}) & \text{if } \zeta+1 < -1,\\
    h^{-2}\logp(\frac{M}{h})\logp(\frac{\sqrt{1+T}}{h}) & \text{if } \zeta+1 = -1,\\
    h^{-2} & \text{if } \zeta+1 > -1,
    \end{cases}
\]
\end{proof}

\subsection{Loewner chains: Brownian driving function}
\label{se:prelim_bm}

Suppose in the following that $\xi(t) = \sqrt{\kappa}B_t$ where $B$ is a standard Brownian motion and $\kappa \ge 0$. We denote the filtration generated by $B$ by $\mathcal F = (\mathcal F_t)$.

With $Z_t = g_t(z)-\sqrt{\kappa}B_t$, the equation \eqref{eq:loewner} can be rewritten as
\[ dZ_t = \frac{2}{Z_t}\,dt-\sqrt{\kappa}\,dB_t \]
which can be seen as a complex version of the (usual) Bessel process. In particular, for initial values $z=x \in \bbR$, the process $Z_t(x)=X_t(x)$ is a real Bessel process of index $\tilde\nu = \frac{2}{\kappa}-\frac{1}{2}$ (equivalently dimension $1+\frac{4}{\kappa}$) run with speed $\kappa$.

Recall that a Bessel process $(\rho_t)$ of positive index $\tilde\nu > 0$ is transient and satisfies
\begin{equation*}
\bbP( \rho_t \le \varepsilon \text{ for some } t \ge 1 ) \asymp \varepsilon^{2\tilde\nu} . 
\end{equation*}
The latter can be derived from the following two facts:\\
1. The transition probability of the Bessel process is (cf.\@ \cite[p.\@ 446]{ry-stochastic-book})
\[ p_t(0,y) = c\,t^{-(\tilde\nu+1)}y^{2\tilde\nu+1}\exp(-y^2/2t) . \]
2. The hitting time of the Bessel process satisfies (cf.\@ \cite[p.\@ 442]{ry-stochastic-book})
\[ \bbP_x( T_\varepsilon < \infty ) = \left(\frac{\varepsilon}{x}\right)^{2\tilde\nu} \quad\text{for } x > \varepsilon . \]

From Brownian scaling, it follows that
\begin{equation}\label{eq:bessel_hit_after_t}
\bbP( \rho_t \le \varepsilon \text{ for some } t \ge t_0 ) \asymp t_0^{-\tilde\nu}\varepsilon^{2\tilde\nu} . 
\end{equation}

The following lemma is useful for removing the boundary effect in the statement of \cref{thm:main_hoelder}.

\begin{lemma}\label{le:sle_hit_after_1}
Let $\kappa < 4$. For $\varepsilon > 0$ we have with probability $1-O(\varepsilon^{\frac{4}{\kappa}-1})$ that
\[
\abs{X_t(z)} \ge \varepsilon \quad \text{for all } z \in H_t \cap \{ \Im z \le \varepsilon \} , \ t \ge 1 .
\]
\end{lemma}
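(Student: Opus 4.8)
The plan is to reduce everything to a single radial Bessel-type / real Bessel estimate, namely \eqref{eq:bessel_hit_after_t}, applied along a countable dense set of starting points on the real line. Recall that for $z = x \in \bbR$ with $x > \sup_t\xi(t)$ or $x < \inf_t\xi(t)$, the process $X_t(x) = Z_t(x)$ is a real Bessel process of index $\tilde\nu = \frac{2}{\kappa} - \frac12$ run at speed $\kappa$; since $\kappa < 4$ we have $\tilde\nu > 0$, so the process is transient and the small-value probability after time $t_0$ is controlled by \eqref{eq:bessel_hit_after_t}. The event we want to control, that $\abs{X_t(z)} \ge \varepsilon$ for all $z \in H_t$ with $\Im z \le \varepsilon$ and all $t \ge 1$, should follow once we know that the two boundary points $x = \pm(\norm{\xi}_{\infty;[0,1]} + O(\varepsilon))$ (or rather the two real Bessel processes started just outside the interval swallowed by time $1$) stay bounded away from $0$ after time $1$, because $\Re g_t$ is monotone in $x$ for $x$ outside $[\inf\xi,\sup\xi]$ (consequence (i) below \eqref{eq:loewner_xy}) and because $\abs{X_t(z)}$ for $z \in \bbH$ with small imaginary part is comparable to $\abs{X_t}$ at the nearby real point.

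Concretely, the steps I would carry out are: (1) Fix a deterministic grid of real starting points $x_n$ of spacing $\asymp \varepsilon$, covering a bounded interval around the origin of size depending on $\norm{B}_{\infty;[0,1]}$ (which is a.s.\ finite and has Gaussian tails, so can be absorbed). (2) For each $x_n$ with $x_n$ outside the interval swallowed by time $1$, apply \eqref{eq:bessel_hit_after_t} with $t_0 = 1$ to bound $\bbP(X_t(x_n) \le \varepsilon \text{ for some } t \ge 1) \asymp \varepsilon^{2\tilde\nu} = \varepsilon^{\frac{4}{\kappa}-1}$. (3) Union bound over the $O(1/\varepsilon)$ relevant grid points; this costs a factor $1/\varepsilon$, so I would actually need to run the grid points to value $\varepsilon^{1+\delta}$ rather than $\varepsilon$, or equivalently use spacing finer than $\varepsilon$ and a short real-Bessel continuity/oscillation estimate on $[1,T]$ to pass from the grid to all $z$. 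Either way, after fixing constants the total probability is $O(\varepsilon^{\frac4\kappa - 1})$ up to adjusting $\varepsilon$ by a constant or a power $1+\delta$, which is absorbed into the $O(\cdot)$. (4) Finally, upgrade from real starting points to all $z \in \bbH$ with $\Im z \le \varepsilon$: if such a $z \in H_t$, then its harmonic projection / nearest real point $x$ is still in $H_t$ (or very nearly so), and by the Loewner flow $\abs{X_t(z) - X_t(x)}$ is controlled by $\Im z \le \varepsilon$ together with $\partial_t Y_t^2 \in [-4,0[$; monotonicity of $\Re g_t$ in the real variable then gives $\abs{X_t(z)} \gtrsim \abs{X_t(x)} - \varepsilon \gtrsim \varepsilon$ after renaming $\varepsilon$.

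The main obstacle I anticipate is step (3)–(4): making the passage from a finite grid to the continuum uniform in $t \in [1,T]$ without losing the exponent $\frac4\kappa - 1$. The naive union bound over $\asymp 1/\varepsilon$ points turns $\varepsilon^{\frac4\kappa-1}$ into $\varepsilon^{\frac4\kappa-2}$, which is worthless when $\kappa$ is not tiny; the fix is to exploit that, for a real Bessel process conditioned to stay above $\varepsilon$ at the grid points, a short-time fluctuation estimate (e.g.\ via the explicit transition density on p.~446 of \cite{ry-stochastic-book} or a Bessel comparison) shows it cannot dip below $\varepsilon/2$ between consecutive grid times, and the number of grid points one genuinely needs is $O(1)$ in the relevant regime because only finitely many real starting points lie outside $[\inf_{[0,1]}\xi,\sup_{[0,1]}\xi]$ and stay within distance $O(\varepsilon)$ of the swallowed set — in fact the binding constraint is essentially the two endpoints $x = \sup_{[0,T]}\xi + O(\varepsilon)$ and $x = \inf_{[0,T]}\xi - O(\varepsilon)$, for which no union bound is needed at all. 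I would structure the proof around that observation: it is really only the two "extreme" Bessel processes that matter, and \eqref{eq:bessel_hit_after_t} applied to each of them, together with monotonicity of $\Re g_t$, directly yields the claim.
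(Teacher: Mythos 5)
Your reduction does not work, and the place it breaks is exactly where the real content of the lemma lies. The conclusion fails as soon as the curve itself re-enters the strip $\{\Im z \le \varepsilon\}$ at some time $t\ge 1$: points of $H_t$ just ahead of the tip then have imaginary part at most $\varepsilon$ and $\abs{Z_t}$ arbitrarily small, hence $\abs{X_t}<\varepsilon$. So any proof must show that this re-entry event has probability $O(\varepsilon^{4/\kappa-1})$, and your scheme never addresses it. Monotonicity on $\bbR$ does reduce the real-line part to the two extremal processes (in fact to $X_t(0^{\pm})$, no grid or union bound is needed at all), and \eqref{eq:bessel_hit_after_t} controls those; but the re-entry event is \emph{not} implied by smallness of $X_t(0^{\pm})$. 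During $[0,1]$ the curve unavoidably spends time in the strip, and a later low approach to $\bbR$ can be shielded from the real axis by this earlier passage (e.g.\ the curve dives after time $1$ into the thin region between an early near-boundary excursion and $\bbR$); in such configurations the tip gets within $\varepsilon$ of $\bbR$ while $g_t(0^+)-\xi(t)$ and $\xi(t)-g_t(0^-)$ stay macroscopic, so the driver-side Bessel estimate sees nothing. Bounding the re-entry probability is what the paper spends most of its proof on, using two genuinely SLE-specific inputs: the boundary-proximity estimate of \cite{sz-boundary-proximity} for the part of the strip with $\abs{\Re z}\ge\varepsilon^{1/2}$, and the excursion-measure estimate \cite[Lemma 4.5]{lw-sle-greens} applied after a stopping time $\tau$ at which $\Im\gamma(\tau)\ge(\log 1/\varepsilon)^{-1}$ for the part near the origin. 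Neither is recoverable from your steps (1)--(3).

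Separately, the comparison in your step (4), $\abs{X_t(z)-X_t(x)}\lesssim\Im z$ for the nearest real point $x$, is not a consequence of $\partial_t Y_t^2\in[-4,0[$ or of monotonicity: $\abs{g_t'}$ is unbounded near the hull, and the estimate is simply false when $\gamma$ comes near $z$. It becomes true (up to constants, and with $x$ ranging over $\bbR\cup\gamma[0,\tau]$, not just $\bbR$) only \emph{after} one has conditioned on the curve not re-entering the strip after $\tau$; this is precisely how \cite[Proposition 3.82]{law-conformal-book} is used in the paper, and it also forces one to run the Bessel estimate for all points of the already-drawn curve $\gamma[0,\tau]$, restarted at $\tau$, rather than only for real starting points. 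So your proposal reproduces the easier half of the argument (the Bessel/monotonicity control of boundary points) but is missing the two estimates that carry the exponent $\varepsilon^{4/\kappa-1}$ for the curve's return to the strip, and the distortion step that converts curve-avoidance into a statement about all $z\in H_t\cap\{\Im z\le\varepsilon\}$.
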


\begin{proof}
It suffices to show this for small $\varepsilon$.

We make use of a few known results about SLE. By, \cite[(1.4)]{sz-boundary-proximity}, the \slek{} trace does not intersect the set $({]-\infty,-\varepsilon^{1/2}]}\cup{[\varepsilon^{1/2},\infty[}) \times [0,\varepsilon]$ with probability $1-O(\varepsilon^{\frac{4}{\kappa}-1})$. The probability of the trace intersecting the set $[\pm\varepsilon^{1/2}] \times [0,\varepsilon]$ after time $1$ will be estimated using \cite[Theorem 1.1]{fl-sle-transience}.

Note that the Loewner equation implies $\sup_{t \in [0,t_0]} \abs{\Re\gamma(t)} \le \sup_{t \in [0,t_0]} \abs{\xi(t)}$. Therefore \eqref{eq:hcap_bound} implies
\[ \begin{split}
\bbP\left( \Im\gamma(t) \ge (\log 1/\varepsilon)^{-1} \text{ for some } t \in [0,1/2] \right) 
&\ge \bbP\left( \sup_{[0,1/2]} \abs{\xi(t)} \lesssim \log 1/\varepsilon \right) \\
&\ge 1-\exp(-c(\log 1/\varepsilon)^2) . 
\end{split} \]
Suppose that $\tau = \inf\{ t \mid \Im\gamma(t) \ge (\log 1/\varepsilon)^{-1} \} \le 1/2$. Restarting the SLE flow at time $\tau$, by \eqref{eq:bessel_hit_after_t}, we then have for fixed $c_1 > 0$
\[ 
\bbP\left(\begin{array}{cc}
\Im\gamma(t) \ge (\log 1/\varepsilon)^{-1} \text{ for some } t \in [0,1/2], \\
\abs{X_t(z)} \ge c_1\varepsilon \text{ for all } z \in \gamma[0,\tau] \cup \bbR,\ t \ge 1
\end{array}\right) 
\ge 1-c\varepsilon^{2\tilde\nu} 
\]
where $X_t(z) = g_t(z)-\xi(t)$ and $g_t(z)$ is understood as the continuous extension of $g_t$ to the boundary. (Indeed, since $g_t$ preserves the ordering of the boundary points, it suffices to consider the boundary points infinitesimally to the left and right of $\gamma(\tau)$. They evolve as a Bessel process after time $\tau$.)

Suppose that this event happens, and suppose additionally that $\gamma$ does not re-enter the set $\{ \Im z \le \varepsilon \}$ after time $\tau$. In that case, it follows from \cite[Proposition 3.82]{law-conformal-book} that $\abs{X_t(z)} \ge c_1\varepsilon-c_2\varepsilon$ for all $z \in H_1 \cap \{ \Im z \le \varepsilon \}$ where $c_2$ is a universal constant.

To finish the proof, we need to bound the probability of $\gamma$ re-entering the set $\{ \Im z \le \varepsilon \}$. As remarked above, it remains to bound the probability of re-entering the set $[\pm\varepsilon^{1/2}] \times [0,\varepsilon]$ after time $\tau$. By \cite[Theorem 1.1]{fl-sle-transience}, this probability is bounded by (a constant times) $(\varepsilon^{1/2}(\log 1/\varepsilon))^{8/\kappa-1}$.
\end{proof}

Now we consider the parametrisation by conformal radius introduced in \cref{se:prelim_general}. Recall that we have the representation
\[ d\frac{X_{\sigma(s)}}{Y_{\sigma(s)}} = 4\frac{X_{\sigma(s)}}{Y_{\sigma(s)}} \left( 1+\frac{X_{\sigma(s)}^2}{Y_{\sigma(s)}^2}\right) \,ds - \left( 1+\frac{X_{\sigma(s)}^2}{Y_{\sigma(s)}^2}\right) \sqrt{\kappa}\,d\hat B_s \]
where $d\hat B_s = \frac{1}{Y_{\sigma(s)}(1+X_{\sigma(s)}^2/Y_{\sigma(s)}^2)} \,dB_{\sigma(s)}$ is another standard Brownian motion.
For the process $\hat\theta(s) = \cot^{-1}\left(\frac{X_{\sigma(s)}}{Y_{\sigma(s)}}\right) \in {]0,\pi[}$ we have
\[ d\hat\theta_s = (\kappa-4)\cot\hat\theta_s\,ds+\sqrt{\kappa}\,d\hat B_s . \]
This is a (time-changed) radial Bessel process of index $\nu = \frac{1}{2}-\frac{4}{\kappa}$ (i.e.\@ dimension $3-\frac{8}{\kappa}$). In particular, it hits the boundary $\{0,\pi\}$ in finite time if and only if $\kappa < 8$. (This reflects the fact that for $\kappa < 8$, each point $z \in \bbH$ is a.s.\@ missed or swallowed in finite time, whereas for $\kappa \ge 8$, each point $z \in \bbH$ is a.s.\@ hit in finite time.)

We assume absorbing boundary for the process, i.e.\@ we stop the process when $\hat\theta_\tau \in \{0,\pi\}$. In case $\kappa \in [0,4]$, this happens only when $\sigma(\tau) = \infty$. In case $\kappa \in {]4,8[}$, this happens when $z$ is swallowed (cf.\@ \cite[Lemma 3]{sch-percolation-formula}).

\begin{remark}
It turns out that under the radial parametrisation the ``one-point martingale'' for SLE (cf.\@ \cite{rs-sle, lw-sle-greens})
\[ M_{\sigma(s)} = \abs{g_{\sigma(s)}'(z)}^\lambda Y_{\sigma(s)}^\zeta (1+X_{\sigma(s)}^2/Y_{\sigma(s)}^2)^{r/2} 
= e^{4s\lambda} Y_{\sigma(s)}^{\zeta+\lambda} (1+X_{\sigma(s)}^2/Y_{\sigma(s)}^2)^{r/2} . \]
is exactly the ``change of measure''-martingale for radial Bessel processes, see \cref{se:bessel}. (This is consistent with the fact that it is the density of SLE$_{\kappa}(\rho)$ with force point at $z$ where $\rho = r\kappa$ (cf.\@ \cite{sw-sle-coordinate-change, zhan-decomposition}).)
\end{remark}

\subsection{Radial Bessel process}
\label{se:bessel}

Part of the material presented here are contained in \cite{law-bessel-notes}.

Consider a radial Bessel process of index $\nu$ (equivalently dimension $\delta=2+2\nu$)
\[ d\theta_t = (\frac{1}{2}+\nu)\cot\theta_t \, dt + dB_t , \quad \theta_0 \in {]0,\pi[} . \]
Recall that such process hits $\{0,\pi\}$ in finite time if and only if $\nu < 0$.

We introduce the ``change of measure''-martingale for the Bessel process: $M_t = (\frac{\sin\theta_t}{\sin\theta_0})^a K_t$ where $K_t = K^{(\nu,a)}_t$ is a compensator that we compute now. We have
\[ d(\sin\theta_t)^a = (\sin\theta_t)^a \left( a\cot\theta_t \,dB_t + \left[ -\frac{a}{2}+a(\frac{a}{2}+\nu)\cot^2\theta_t \right] \, dt \right) . \]
This leads us to
\[ \begin{split}
K_t &= \exp\left( \int_0^t \left[ \frac{a}{2}-a(\frac{a}{2}+\nu)\cot^2\theta_s \right] \, ds \right) \\
&= \exp\left( a\left[\frac{a}{2}+\frac{1}{2}+\nu\right] t -a(\frac{a}{2}+\nu) \int_0^t (\sin\theta_s)^{-2} \, ds \right) .
\end{split} \]
Then
\[ M_t = \left(\frac{\sin\theta_t}{\sin\theta_0}\right)^a K_t \]
is a local martingale with
\[ dM_t = a\cot\theta_t\, M_t \, dB_t . \]
It is a bounded martingale until time $t \wedge T_\varepsilon$ where $T_\varepsilon \defeq \inf\{ t \mid \theta_t \in \{\varepsilon,\pi-\varepsilon\}\}$.

Applying Girsanov's theorem to $M_{t \wedge T_\varepsilon}$ we get a measure $\bbP^{\nu+a,\varepsilon}$ defined by $d\bbP^{\nu+a,\varepsilon} = M_{t \wedge T_\varepsilon} \, d\bbP^\nu$. We can then write
\[ d\theta_t = (\frac{1}{2}+\nu+a)\cot\theta_t \, dt + d\hat B_t , \quad t \le T_\varepsilon \]
where $\hat B = \hat B^{(\nu,a)}$ is a standard Brownian motion under the measure $\bbP^{\nu+a,\varepsilon}$ until time $T_\varepsilon$.

We claim that in case $\nu+a \ge 0$, $M_t$ is a martingale. Indeed, it suffices to show that
\[ \bbE^\nu[M_{t \wedge T_\varepsilon} 1_{T_\varepsilon < t}] = \bbP^{\nu+a,\varepsilon}(T_\varepsilon < t) \to 0 \quad \text{as } \varepsilon \searrow 0 \]
since the optional sampling theorem (applied to $M_{t \wedge T_\varepsilon}$) and the monotone convergence theorem will then imply $\bbE^\nu M_t = \bbE^\nu M_0$. But this is true because the index of the Bessel process $\theta$ under the law $\bbP^{\nu+a,\varepsilon}$ is $\nu+a \ge 0$.

In particular, $d\bbP^{\nu+a} = M_t\,d\bbP^\nu$ defines a probability measure in case $\nu+a \ge 0$.

In case $\nu+a < 0$, for every $t$ the measure $\bbP^{\nu+a}$ is still defined on $\mathcal F_t\big|_{\{T_0 > t\}}$, and $\frac{d\bbP^{\nu+a}}{d\bbP^\nu}\big|_{\{T_0 > t\}} = M_t$.

Let $q_t(x,y)$ denote the transition density of the process $\theta$ under $\bbP^\nu$. It can be written down explicitly, see \cite[Proposition 8.1]{zhan-ergodicity}, and satisfies the following bound. In particular, it converges exponentially fast to its stationary law $f(y) = c_\nu(\sin y)^{1+2\nu}$.

\begin{proposition}\label{prop:bessel_density}
If $\nu \ge 0$, then there exist $c < \infty$ such that for all $t \ge 1$ and $x,y \in {]0,\pi[}$
\[ (1-ce^{-(1+\nu) t}) f(y) \le q_t(x,y) \le (1+ce^{-(1+\nu) t}) f(y) \]
where $f(y) = c_\nu(\sin y)^{1+2\nu}$.
\end{proposition}

We are interested in the ``radial Bessel clock''
\[ C_t = \int_0^t (\sin\theta_s)^{-2} \, ds . \]

In case $\nu > 0$, we can pick $a \in {]-2\nu,0[}$, and the fact that $M_t$ is a martingale implies that $C_t$ has exponential moments of order $\frac{\nu^2}{2}$.

\begin{proposition}\label{prop:bessel_clock_moments_nug0}
If $\nu > 0$ and $p>0$, then there exists $c < \infty$ (depending on $\nu,p$) such that
\[ \bbE^\nu[C_t^p] \le c(1+(-\log\sin\theta_0)^p+t^p) \]
for $t \ge 1$.
\end{proposition}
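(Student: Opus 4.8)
The plan is to derive the moment bound on the radial Bessel clock $C_t = \int_0^t (\sin\theta_s)^{-2}\,ds$ from the fact (established just above) that the martingale $M_t = (\sin\theta_t/\sin\theta_0)^a K_t$ is a genuine martingale whenever $\nu + a \ge 0$, together with the explicit form of $K_t$. The key observation is that $K_t$ contains the term $-a(\tfrac a2 + \nu)\int_0^t(\sin\theta_s)^{-2}\,ds = -a(\tfrac a2 + \nu)\,C_t$; if we pick $a$ with $\nu + a \ge 0$ and $a(\tfrac a2 + \nu) < 0$, then $K_t \ge e^{-c t}\exp(\lambda C_t)$ for a positive constant $\lambda = a(\tfrac a2+\nu)\cdot(-1) > 0$ up to the deterministic drift. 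So $M_t$ being a mean-one martingale yields
\[
\bbE^\nu\Big[(\sin\theta_t)^a e^{\lambda C_t}\Big] \le (\sin\theta_0)^a e^{c t}.
\]
Since $\nu > 0$, we may choose $a \in {]-2\nu, 0[}$ (e.g.\ $a = -\nu$), which makes $\nu + a > 0$ and $a(\tfrac a2+\nu) = -\nu(-\tfrac\nu2+\nu) = -\tfrac{\nu^2}{2} < 0$, so $\lambda = \tfrac{\nu^2}{2} > 0$. This is exactly the exponential integrability of order $\tfrac{\nu^2}{2}$ mentioned in the text.

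The next step is to convert this exponential moment into a polynomial moment bound with the stated dependence on $\theta_0$ and $t$. From $\bbE^\nu[(\sin\theta_t)^a e^{\lambda C_t}] \le (\sin\theta_0)^a e^{ct}$ and $(\sin\theta_0)^a \le (-\log\sin\theta_0)^a$-type bounds we won't directly get what we want, because $(\sin\theta_t)^a \ge 1$ for $a < 0$ makes the left side only larger — so actually $\bbE^\nu[e^{\lambda C_t}] \le \bbE^\nu[(\sin\theta_t)^a e^{\lambda C_t}] \le (\sin\theta_0)^a e^{ct} = (\sin\theta_0)^{-|a|}e^{ct}$. Now I would split according to whether $\sin\theta_0$ is bounded below or not. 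If $\sin\theta_0 \ge \tfrac12$, say, then $\bbE^\nu[e^{\lambda C_t}] \le 2^{|a|}e^{ct}$, and using $x^p \le C_{p,\lambda} e^{\lambda x}$ gives $\bbE^\nu[C_t^p] \le C e^{ct}$ — but this is exponential in $t$, not polynomial. To fix this I would instead use the Markov property: run the process for time $1$ first. By \cref{prop:bessel_density} (or just transience/positive recurrence), after time $1$ the law of $\theta_1$ has a density bounded by a constant multiple of the stationary density, uniformly in $\theta_0$; in particular $\sin\theta_1$ is bounded below in a suitable averaged sense, and more importantly the increment $C_{t} - C_1 = \int_1^t(\sin\theta_s)^{-2}\,ds$ has all polynomial moments bounded by $C(1 + (t-1)^p)$ uniformly in the starting point $\theta_1$ — this follows by the same exponential-moment argument applied on each unit time interval $[k, k+1]$, using the stationary-comparison of \cref{prop:bessel_density} to control $\bbE[(\sin\theta_k)^{-|a|}]$ by a constant. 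Summing the contributions of the unit intervals and applying Minkowski's inequality in $L^p$ gives a bound linear in $t$ raised to the $p$, i.e.\ $O(t^p)$. The remaining piece, $C_1 = \int_0^1(\sin\theta_s)^{-2}\,ds$, depends on the starting point; here the exponential moment bound $\bbE^\nu[e^{\lambda C_1}] \le (\sin\theta_0)^{-|a|}e^{c}$ together with $C_1^p \le C_{p,\lambda}(-\log(\sin\theta_0)^{-|a|} + \lambda C_1)^p \cdot$(something) — more cleanly, $\bbE^\nu[C_1^p] \le C_{p,\lambda}\bbE^\nu[e^{\lambda C_1}]\big/$ no — one uses $x^p \le p!\,\lambda^{-p}e^{\lambda x}$ only when the right side is $O(1)$; since $\bbE^\nu[e^{\lambda C_1}]$ can be as large as $(\sin\theta_0)^{-|a|}$, I instead write $C_1^p e^{-\lambda C_1/2} \le C_{p,\lambda}$ and $\bbE^\nu[C_1^p] = \bbE^\nu[C_1^p e^{-\lambda C_1/2}e^{\lambda C_1/2}] \le C_{p,\lambda}\bbE^\nu[e^{\lambda C_1/2}] \le C_{p,\lambda}'(\sin\theta_0)^{-|a|/?}$, and then bound $(\sin\theta_0)^{-\epsilon} \lesssim 1 + (-\log\sin\theta_0)^p$ for any $\epsilon$ — wait, that is false as $\theta_0 \to 0$.

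So the honest route for $C_1$: choose $a$ very close to $-2\nu$ so $|a|$ can be taken small, but $|a|$ fixed still gives a power of $\sin\theta_0$. The claimed bound $\bbE^\nu[C_t^p] \le c(1 + (-\log\sin\theta_0)^p + t^p)$ has only a \emph{logarithmic} dependence on $\sin\theta_0$, which suggests that the exponential moment must actually be used more carefully — specifically, one should not bound $(\sin\theta_t)^a \ge 1$ but keep it, and integrate against the time-$t$ transition density. Concretely: apply the martingale identity at a \emph{random} time or use the representation more cleverly so that the $\theta_0$-dependence enters only through $\sin\theta_0$ to a negative power, and then observe that by Jensen/interpolation $\bbE[C_1^p]$ against that power is still $\lesssim (-\log\sin\theta_0)^p$ after optimizing over the choice of $a \in {]-2\nu,0[}$ — letting $|a| \downarrow 0$ degrades $\lambda \downarrow 0$, and the tradeoff $\sup_{|a|} \{-|a|\log\sin\theta_0 - \log \lambda(a)\}$ with $\lambda(a) \asymp |a|$ is optimized at $|a| \asymp 1/(-\log\sin\theta_0)$, yielding $\bbE[e^{\lambda C_1}] \lesssim e^{c}\cdot e^{O(1)}$ but with $\lambda \asymp 1/(-\log\sin\theta_0)$, hence $\bbE[C_1^p] \lesssim \lambda^{-p} \lesssim (-\log\sin\theta_0)^p$. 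That is the mechanism producing the logarithm. I expect \textbf{this optimization over $a$ — making the $\theta_0$-dependence logarithmic rather than polynomial — to be the main technical obstacle}; everything else (the martingale computation, the unit-interval decomposition giving the $t^p$ term, Minkowski's inequality, and the stationary comparison from \cref{prop:bessel_density}) is routine.
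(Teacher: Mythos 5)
Your final plan is correct, but it takes a genuinely different route from the paper. You exploit the change-of-measure martingale quantitatively: keeping the Girsanov exponent $a$ free, you use $\bbE^\nu\bigl[(\sin\theta_t)^a e^{\lambda(a)C_t}\bigr]\le(\sin\theta_0)^{a}e^{c_a t}$ with $\lambda(a)=-a(\tfrac a2+\nu)\asymp\abs{a}$ for small $\abs{a}$, and then optimise $\abs{a}\asymp 1/(1-\log\sin\theta_0)$ on the initial unit interval so that the prefactor $(\sin\theta_0)^{-\abs{a}}$ is $O(1)$ while $\lambda^{-p}$ produces exactly the $(-\log\sin\theta_0)^p$; the bulk $C_t-C_1$ is handled by a unit-interval decomposition, the stationary comparison of \cref{prop:bessel_density} to control $\bbE[(\sin\theta_k)^{-\abs{a}}]$ after averaging (your phrase ``uniformly in the starting point'' is loose, but you supply the correct fix), and Minkowski. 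This does work; two small caveats: you only ever need $\bbE M_t\le 1$, which holds for every $a\in{]-2\nu,0[}$ because $M$ is a nonnegative local martingale, so your earlier digression about $a$ near $-2\nu$ is harmless, and for $0<p<1$ Minkowski is unavailable but the claim follows from the case $p=1$ by Jensen. The paper's proof is shorter and more direct: applying It\^o's formula to $\log\sin\theta_t$ gives the linear identity $\nu C_t=\log\sin\theta_t-\log\sin\theta_0-\int_0^t\cot\theta_s\,dB_s+(\tfrac12+\nu)t$; since the quadratic variation of the stochastic integral is $C_t-t$, the BDG inequality together with \cref{prop:bessel_density} (to bound $\bbE(-\log\sin\theta_t)^p$ by a constant for $t\ge1$) yields $\bbE C_t^p\lesssim 1+(-\log\sin\theta_0)^p+t^p+(\bbE C_t^p)^{1/2}$, and a self-bounding quadratic argument finishes, the exponential moments being used only to know a priori that $\bbE C_t^p<\infty$. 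Your optimisation over $a$ is in effect a discretised version of this linearisation (differentiating $\log M_t$ at $a=0$ recovers the paper's identity), so both proofs extract the same information; the paper's route gets the logarithmic $\theta_0$-dependence and the $t^p$ term in one stroke without any interval decomposition, while yours has the mild virtue of running entirely off the exponential-moment bound you had already isolated.
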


\begin{proof}
We have
\[ \log\sin\theta_t = \log\sin\theta_0 + \int_0^t \cot\theta_s \, dB_s - (\frac{1}{2}+\nu)t + \nu \int_0^t (\sin\theta_s)^{-2} \, ds , \]
or equivalently
\[ \nu C_t = \log\sin\theta_t-\log\sin\theta_0 - \int_0^t \cot\theta_s \, dB_s + (\frac{1}{2}+\nu)t . \]
Note that
\[ \left\langle \int_0^t \cot\theta_s \, dB_s \right\rangle = \int_0^t \cot^2\theta_s \, ds = C_t-t \]
and therefore (by the BDG inequality)
\[ \begin{split}
\bbE C_t^p &\lesssim (-\log\sin\theta_0)^p + \bbE(-\log\sin\theta_t)^p + t^p + \bbE(C_t)^{p/2} \\
&\le (-\log\sin\theta_0)^p + c + t^p + (\bbE C_t^p)^{1/2} 
\end{split} \]
where we have used \cref{prop:bessel_density} to bound $\bbE(-\log\sin\theta_t)^p$ by a constant $c$ independent of $t \ge 1$.

Recall that (in the case $\nu > 0$) $C_t$ has exponential moments of small order. Hence $\bbE C_t^p < \infty$, and solving the quadratic equation yields
\[ (\bbE C_t^p)^{1/2} \lesssim c+\sqrt{(-\log\sin\theta_0)^p + c + t^p} \]
for all $t$.
\end{proof}

\begin{proposition}\label{prop:bessel_clock_mixed_moments}
Let $\nu \in \bbR$ and $\lambda < \frac{\nu^2}{2}$. Let $\tau$ be a bounded stopping time and $X$ be a $\mathcal F_\tau$-measurable random variable. Then
\[ \bbE^\nu[ X \exp(\lambda C_\tau) \,1_{T_0 > \tau}] = (\sin\theta_0)^{a} \,\bbE^{\nu+a}\left[X (\sin\theta_\tau)^{-a} \exp\left((\lambda-\frac{a}{2})\tau\right)\right] \]
where $a = -\nu+\sqrt{\nu^2-2\lambda}$.\\
Moreover, suppose additionally that either $\nu > -2$ or (in the case $\nu \le -2$) $\lambda < -2\nu-2$.
Then, for $p>0$, there exists $c < \infty$ (depending on $\nu,\lambda,p$) such that
\[ \bbE^\nu[ \exp(\lambda C_t) C_t^p \,1_{T_0 > t}] \le c (\sin\theta_0)^{a} (1+(-\log\sin\theta_0)^p+t^p) \exp\left((\lambda-\frac{a}{2})t\right) \]
for $t \ge 1$.
\end{proposition}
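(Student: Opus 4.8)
Here is a plan of proof.

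For the first (identity) part, the idea is to read off $a$ from the compensator $K^{(\nu,a)}$ introduced above and then perform the change of measure from $\bbP^\nu$ to $\bbP^{\nu+a}$. Since $K_t^{(\nu,a)} = \exp\bigl(a[\tfrac a2+\tfrac12+\nu]t - a(\tfrac a2+\nu)C_t\bigr)$, the coefficient of $C_t$ is $-a(\tfrac a2+\nu)$; choosing $a$ with $a(\tfrac a2+\nu) = -\lambda$, i.e.\ $a^2+2a\nu+2\lambda=0$, and taking the root $a = -\nu+\sqrt{\nu^2-2\lambda}$ (real since $\lambda<\tfrac{\nu^2}{2}$, with $\nu+a=\sqrt{\nu^2-2\lambda}>0$, so that $\bbP^{\nu+a}$ is a bona fide probability measure), one computes $K_t^{(\nu,a)}=\exp\bigl((\tfrac a2-\lambda)t+\lambda C_t\bigr)$, hence on $\{T_0>t\}$
\[ \bigl(M_t^{(\nu,a)}\bigr)^{-1} = (\sin\theta_0)^a(\sin\theta_t)^{-a}\exp\bigl((\lambda-\tfrac a2)t-\lambda C_t\bigr). \]
Since $M^{(\nu,a)}$ is a martingale when $\nu+a\ge0$, optional stopping extends $d\bbP^{\nu+a} = M_\tau^{(\nu,a)}\,d\bbP^\nu$ to $\mathcal F_\tau$ for any bounded stopping time $\tau$, so that $\bbE^\nu[\psi\,1_{T_0>\tau}] = \bbE^{\nu+a}\bigl[\psi\,(M_\tau^{(\nu,a)})^{-1}\bigr]$ for $\mathcal F_\tau$-measurable $\psi\ge0$ (the indicator on the right disappears because $T_0=\infty$ $\bbP^{\nu+a}$-a.s., as $\nu+a>0$). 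Taking $\psi=X\exp(\lambda C_\tau)$ (first for $X\ge0$, the general case following by splitting into positive and negative parts once finiteness is known, and noting $K_\tau$ and $C_\tau$ are defined pathwise), the factors $\exp(\pm\lambda C_\tau)$ cancel and one is left with exactly the asserted identity.

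For the moment bound I would apply this identity with $\tau=t$ deterministic and $X=C_t^p$ (rigorously: first with $X=C_t^p\wedge N$, then let $N\to\infty$ by monotone convergence, which is justified because the resulting bound is finite). Pulling the deterministic factors $(\sin\theta_0)^a$ and $\exp((\lambda-\tfrac a2)t)$ out of the expectation, it suffices to bound $\bbE^{\nu+a}\bigl[C_t^p(\sin\theta_t)^{-a}\bigr]$ by $c(1+(-\log\sin\theta_0)^p+t^p)$ for $t\ge1$, where under $\bbP^{\nu+a}$ the process $\theta$ is a radial Bessel process of the positive index $\nu'\defeq\nu+a=\sqrt{\nu^2-2\lambda}$. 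Split this expectation by Hölder's inequality as $\bigl(\bbE^{\nu'}[C_t^{pq'}]\bigr)^{1/q'}\bigl(\bbE^{\nu'}[(\sin\theta_t)^{-aq}]\bigr)^{1/q}$ with conjugate exponents $q,q'$ (take $q=2$ when $a\le0$). The first factor is $\le c(1+(-\log\sin\theta_0)^p+t^p)$ for $t\ge1$ by \cref{prop:bessel_clock_moments_nug0} (applicable since $\nu'>0$), combined with subadditivity of $r\mapsto r^{1/q'}$. For the second factor, \cref{prop:bessel_density} bounds the transition density of $\theta$ (for $t\ge1$) by a constant times $(\sin y)^{1+2\nu'}$, so $\bbE^{\nu'}[(\sin\theta_t)^{-aq}]\lesssim\int_0^\pi(\sin y)^{1+2\nu'-aq}\,dy$, which is finite and bounded uniformly in $t\ge1$ provided $aq<2+2\nu'$.

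The one delicate point — and the only place the extra hypothesis enters — is ensuring $a<2+2\nu'$ strictly, so that such a $q>1$ can be chosen. Since $\nu'=\nu+a$, this is equivalent to $a>-2-2\nu$, i.e.\ to $\sqrt{\nu^2-2\lambda}>-2-\nu$, which is automatic when $\nu>-2$ (the right side is then nonpositive while the left is positive) and, upon squaring the nonnegative quantity $-2-\nu$, equivalent to $\lambda<-2\nu-2$ when $\nu\le-2$; this is precisely the assumed condition. (When $a\le0$ one necessarily has $\nu\ge0$, so the hypothesis is vacuous and $(\sin\theta_t)^{-a}\le1$ makes the second factor trivially bounded.) Combining the two factors, and tracking that the constant depends only on $\nu,\lambda,p$ through $\nu'$ and the chosen $q$, yields the claimed bound. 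I expect no genuinely hard step here: the real work is this bookkeeping of exponents together with the routine truncation/integrability justification in the identity step.
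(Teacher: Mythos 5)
Your proposal is correct and follows essentially the same route as the paper: the same choice of $a$ solving $-a(\tfrac a2+\nu)=\lambda$ with $\nu+a=\sqrt{\nu^2-2\lambda}>0$, the change of measure via the martingale $M^{(\nu,a)}$ (with the indicator absorbed because $T_0=\infty$ a.s.\ under $\bbP^{\nu+a}$), and then Hölder's inequality combined with \cref{prop:bessel_clock_moments_nug0} and \cref{prop:bessel_density}, with the exponent condition $-aq+1+2(\nu+a)>-1$ analysed exactly as in the paper to recover the hypothesis $\nu>-2$ or $\lambda<-2\nu-2$. The extra bookkeeping you add (truncation of $X$, the case $a\le 0$) is harmless and only makes explicit what the paper leaves implicit.
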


\begin{proof}
The parameter $a$ is chosen such that $\lambda = -a(\frac{a}{2}+\nu)$ and $\nu+a > 0$. In that case,
\[ M_t = \left(\frac{\sin\theta_t}{\sin\theta_0}\right)^a \exp\left( (\frac{a}{2}-\lambda) t + \lambda C_t \right) \]
is a $\bbP^\nu$-martingale, and the law of $\theta$ under $\bbP^{\nu+a}$ is that of a radial Bessel process of index $\nu+a > 0$. Therefore
\[ \begin{split}
\bbE^\nu[ X \exp(\lambda C_\tau) \,1_{T_0 > \tau}] 
&= \bbE^{\nu+a}[ X \exp(\lambda C_\tau) M_\tau^{-1} \,1_{T_0 > \tau}] \\
&= (\sin\theta_0)^{a} \,\bbE^{\nu+a}\left[X (\sin\theta_\tau)^{-a} \exp\left((\lambda-\frac{a}{2})\tau\right)\right] . 
\end{split} \]
To get the second claim, we apply this to $X = C_t^p$, then use Hölder's inequality, \cref{prop:bessel_clock_moments_nug0}, and \cref{prop:bessel_density} to obtain
\[ \begin{split}
\bbE^{\nu+a}[C_t^p (\sin\theta_t)^{-a}] 
&\le \left( \bbE^{\nu+a}[C_t^{pq'}] \right)^{1/q'} \left( \bbE^{\nu+a}[(\sin\theta_t)^{-aq}] \right)^{1/q}\\
&\lesssim 1+(-\log\sin\theta_0)^p+t^p
\end{split} \]
provided that the second expectation is finite and bounded for some choice of $q>1$. By \cref{prop:bessel_density} this is the case whenever $-aq+1+2(\nu+a) > -1$, and we can pick such $q>1$ if and only if $\nu > -2$ or $\lambda < -2\nu-2$.
\end{proof}

In the special case $\lambda = 0$, we get the following statement.

\begin{corollary}\label{prop:bessel_clock_moments_nul0}
Let $\nu < 0$. Let $\tau$ be a bounded stopping time and $X$ be a $\mathcal F_\tau$-measurable random variable. Then
\[ \bbE^\nu[X \,1_{T_0 > \tau}] = (\sin\theta_0)^{-2\nu} \,\bbE^{-\nu}[X (\sin\theta_\tau)^{2\nu} e^{\nu \tau}] . \]
Moreover, for $p>0$, there exists $c < \infty$ (depending on $\nu,p$) such that
\[ \bbE^\nu[C_t^p \,1_{T_0 > t}] \le c (\sin\theta_0)^{-2\nu} (1+(-\log\sin\theta_0)^p+t^p) e^{\nu t} \]
for $t \ge 1$.
\end{corollary}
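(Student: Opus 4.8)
The plan is to obtain this as the special case $\lambda = 0$ of \cref{prop:bessel_clock_mixed_moments}, after checking that the hypotheses specialise correctly. Setting $\lambda = 0$ in the definition $a = -\nu + \sqrt{\nu^2 - 2\lambda}$ and using $\nu < 0$ gives $a = -\nu + \abs{\nu} = -2\nu$, so that $\nu + a = -\nu > 0$ (so the change of measure lands on a genuine radial Bessel process of positive index $-\nu$) and $\lambda - \tfrac{a}{2} = \nu$. Substituting these values into the first identity of \cref{prop:bessel_clock_mixed_moments} yields exactly
\[ \bbE^\nu[X \,1_{T_0 > \tau}] = (\sin\theta_0)^{-2\nu} \,\bbE^{-\nu}[X (\sin\theta_\tau)^{2\nu} e^{\nu \tau}] , \]
which is the first claim. (Equivalently, one may rederive it directly: $M_t = (\sin\theta_t/\sin\theta_0)^{-2\nu} e^{-\nu t}$ is the $\bbP^\nu$-martingale of \cref{se:bessel} with $a = -2\nu$, under $\bbP^{-\nu}$ the process $\theta$ has index $-\nu > 0$ hence $T_0 = \infty$ a.s., and changing measure via $M_\tau^{-1}$ gives the formula.)

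For the moment bound, I would verify that the extra hypothesis appearing in \cref{prop:bessel_clock_mixed_moments} — namely $\nu > -2$, or $\lambda < -2\nu - 2$ when $\nu \le -2$ — holds automatically here: if $\nu \le -2$ then $-2\nu - 2 \ge 2 > 0 = \lambda$. Thus \cref{prop:bessel_clock_mixed_moments} applies with $p$ as given, $\tau = t$, $X = C_t^p$, and its conclusion reads
\[ \bbE^\nu[ C_t^p \,1_{T_0 > t}] \le c (\sin\theta_0)^{-2\nu} (1+(-\log\sin\theta_0)^p+t^p) e^{\nu t} , \]
which is precisely the second claim. If one prefers a self-contained derivation, apply the first identity with $X = C_t^p$, then Hölder's inequality to split $\bbE^{-\nu}[C_t^p (\sin\theta_t)^{2\nu}]$, bound the $C_t$-factor by \cref{prop:bessel_clock_moments_nug0} (available since the index $-\nu$ is positive) and the $(\sin\theta_t)^{2\nu}$-factor using \cref{prop:bessel_density}: its stationary density is proportional to $(\sin y)^{1-2\nu}$, so $\bbE^{-\nu}[(\sin\theta_t)^{2\nu q}]$ stays bounded in $t \ge 1$ provided $2\nu q + 1 - 2\nu > -1$, i.e.\@ $q < 1 + 1/\abs{\nu}$, which leaves room for some $q > 1$.

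There is essentially no obstacle: the statement is a direct corollary, and the only points needing a line of verification are the computation $a = -2\nu$ and the observation that the auxiliary integrability condition is vacuous when $\lambda = 0$ and $\nu < 0$. I would therefore write the proof as a two-sentence reduction to \cref{prop:bessel_clock_mixed_moments}, possibly followed by the short self-contained argument in parentheses for the reader's convenience.
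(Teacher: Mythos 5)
Your proposal is correct and is exactly the paper's route: the corollary is stated as the special case $\lambda=0$ of \cref{prop:bessel_clock_mixed_moments}, and your computations ($a=-2\nu$, $\nu+a=-\nu>0$, $\lambda-\tfrac{a}{2}=\nu$, and the auxiliary condition being vacuous since $-2\nu-2\ge 2>0$ when $\nu\le -2$) are precisely the verifications needed. The optional self-contained argument you sketch also matches the paper's proof of that proposition specialised to $a=-2\nu$, including the integrability threshold $q<1+1/\abs{\nu}$.
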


We have a similar statement in case $p<0$.

\begin{corollary}\label{prop:bessel_clock_mixed_moments_neg}
Let $\nu \in \bbR$ and $\lambda \in {]0,\frac{\nu^2}{2}[}$ such that $\nu > -2$ or $\lambda < -2\nu-2$. Then, for $p>0$, there exists $c<\infty$ (depending on $\nu,\lambda,p$) such that
\[ \bbE^\nu[ \exp(\lambda C_t) (1+C_t)^{-p} \,1_{T_0 > t}] \le c(\sin\theta_0)^{a} \exp\left((\lambda-\frac{a}{2})t\right)t^{-p} \]
for $t \ge 1$ where $a = -\nu+\sqrt{\nu^2-2\lambda}$.
\end{corollary}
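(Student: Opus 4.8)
The plan is to reduce the statement to the already-proven Proposition~\ref{prop:bessel_clock_mixed_moments} by exactly the same change-of-measure used there, and then replace the polynomial factor $(1+C_t)^{-p}$ by an estimate that is dual to the one in the positive-power case. First I would set $a = -\nu+\sqrt{\nu^2-2\lambda}$, noting that $\lambda \in {]0,\frac{\nu^2}{2}[}$ guarantees $a$ is real, $\lambda = -a(\frac a2+\nu)$, and $\nu+a = \sqrt{\nu^2-2\lambda} > 0$; the side condition ($\nu > -2$ or $\lambda < -2\nu-2$) is precisely what is needed to run the Hölder step below with an integrable power of $\sin\theta_t$. As in the proof of Proposition~\ref{prop:bessel_clock_mixed_moments}, $M_t = (\sin\theta_t/\sin\theta_0)^a \exp((\frac a2-\lambda)t+\lambda C_t)$ is a $\bbP^\nu$-martingale and $\theta$ under $\bbP^{\nu+a}$ is a radial Bessel process of (positive) index $\nu+a$, giving
\[
\bbE^\nu[\exp(\lambda C_t)(1+C_t)^{-p}\,1_{T_0>t}] = (\sin\theta_0)^a \exp\bigl((\lambda-\tfrac a2)t\bigr)\, \bbE^{\nu+a}\bigl[(1+C_t)^{-p}(\sin\theta_t)^{-a}\bigr] .
\]
So everything reduces to showing $\bbE^{\nu+a}[(1+C_t)^{-p}(\sin\theta_t)^{-a}] \lesssim t^{-p}$ for $t \ge 1$.

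For that bound I would argue that $C_t$ is typically of order $t$ under $\bbP^{\nu+a}$, so $(1+C_t)^{-p}$ is typically of order $t^{-p}$, and control the contribution of the atypical event $\{C_t \text{ small}\}$. Concretely, split the expectation at $\{C_t \ge t/2\}$ and its complement. On $\{C_t \ge t/2\}$ we simply bound $(1+C_t)^{-p} \le (t/2)^{-p} \lesssim t^{-p}$ and are left with $\bbE^{\nu+a}[(\sin\theta_t)^{-a}]$, which is bounded uniformly in $t \ge 1$ by Proposition~\ref{prop:bessel_density} under the stated side condition (this is the same finiteness requirement $-a+1+2(\nu+a)>-1$, i.e. $\nu+a>-1$, that appears in Proposition~\ref{prop:bessel_clock_mixed_moments}; here it is automatic since $\nu+a>0$, but one also needs a little room for the Hölder split with $(\sin\theta_t)^{-a}$, which is exactly where $\nu>-2$ or $\lambda<-2\nu-2$ enters). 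On the complement $\{C_t < t/2\}$, use Hölder with a pair $q,q'>1$ to separate $(1+C_t)^{-p}(\sin\theta_t)^{-a}$ into $\bbE^{\nu+a}[(\sin\theta_t)^{-aq}]^{1/q}$, which is again a bounded constant, times $\bbP^{\nu+a}(C_t<t/2)^{1/q'}$, and show the latter decays faster than any polynomial in $t$. The deviation estimate $\bbP^{\nu+a}(C_t<t/2) \le e^{-ct}$ follows from an exponential-moment bound: since $\nu+a>0$ we may pick $\mu>0$ small, and the martingale property of $M^{(\nu+a,-\tilde a)}$ for a suitable $\tilde a$ (equivalently the reasoning behind Proposition~\ref{prop:bessel_clock_moments_nug0}) shows $\bbE^{\nu+a}[\exp(-\mu C_t)] \le e^{-c\mu t + O(1)}$ after using the drift identity $\nu C_t = \log\sin\theta_t-\log\sin\theta_0 - \int_0^t\cot\theta_s\,dB_s + (\frac12+\nu)t$ together with Proposition~\ref{prop:bessel_density}; then Markov's inequality in the form $\bbP^{\nu+a}(C_t<t/2)=\bbP^{\nu+a}(e^{-\mu C_t}>e^{-\mu t/2})\le e^{\mu t/2}\bbE^{\nu+a}[e^{-\mu C_t}]$ gives exponential decay once $\mu$ is small. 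Collecting the two pieces yields $\bbE^{\nu+a}[(1+C_t)^{-p}(\sin\theta_t)^{-a}] \lesssim t^{-p} + e^{-ct} \lesssim t^{-p}$, and multiplying back by $(\sin\theta_0)^a\exp((\lambda-\frac a2)t)$ gives the claim.

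The main obstacle I anticipate is the lower-deviation estimate for the Bessel clock, $\bbP^{\nu+a}(C_t < t/2) \le e^{-ct}$: unlike the upper tail of $C_t$ (handled in Proposition~\ref{prop:bessel_clock_moments_nug0} via exponential moments of positive order), the lower tail requires an exponential moment $\bbE[\exp(-\mu C_t)]$ with the \emph{right} exponential rate in $t$, and one must be careful that the naive bound only gives $\bbE[\exp(-\mu C_t)] \le 1$, which is useless. The fix is to exploit that under $\bbP^{\nu+a}$ with $\nu+a>0$ the process $\theta$ is positive recurrent with stationary density $f$ bounded away from $0$ on compact subsets of ${]0,\pi[}$, so $C_t = \int_0^t (\sin\theta_s)^{-2}\,ds$ grows linearly with a genuine Cramér-type lower deviation bound — this can be obtained either from the drift identity above by noting $\nu C_t + \int_0^t\cot\theta_s\,dB_s = \log\sin\theta_t - \log\sin\theta_0 + (\frac12+\nu)t$ and controlling $\log\sin\theta_t$ via Proposition~\ref{prop:bessel_density} and the quadratic variation $\langle\int\cot\theta\,dB\rangle_t = C_t - t$, or by a direct subadditivity/renewal argument over unit time intervals. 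Everything else is a routine Hölder argument mirroring Proposition~\ref{prop:bessel_clock_mixed_moments}; the only place the extra hypothesis ($\nu>-2$ or $\lambda<-2\nu-2$) is genuinely used is to guarantee $\bbE^{\nu+a}[(\sin\theta_t)^{-aq}]<\infty$ uniformly in $t\ge1$ for some $q>1$, identical to there.
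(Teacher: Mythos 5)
Your proof is correct, but it is organised differently from the paper's. The paper stays under $\bbP^\nu$ and splits on the events $\{C_t \le \delta t\}$ and $\{C_t \ge \delta t\}$: the large-clock event is handled by the change of measure of \cref{prop:bessel_clock_mixed_moments}, while the small-clock event is handled by the crude bound $\exp(\lambda C_t) \le \exp(\lambda\delta t)$ together with a case distinction ($\nu \ge 0$ versus $\nu < 0$, the latter using \cref{prop:bessel_clock_moments_nul0} for the survival probability) and a strict exponential gap in the rates. You instead apply the identity in \cref{prop:bessel_clock_mixed_moments} once, with $X = (1+C_t)^{-p}$ and $\tau = t$, and reduce everything to $\bbE^{\nu+a}[(1+C_t)^{-p}(\sin\theta_t)^{-a}] \lesssim t^{-p}$; this avoids the $\nu \ge 0$/$\nu < 0$ case split and the use of \cref{prop:bessel_clock_moments_nul0} entirely, at the price of needing a lower bound on $C_t$ under $\bbP^{\nu+a}$. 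The obstacle you flag as the main difficulty — the lower-deviation estimate $\bbP^{\nu+a}(C_t < t/2) \le e^{-ct}$ — is in fact vacuous: since $\sin\theta_s \le 1$ one has $(\sin\theta_s)^{-2} \ge 1$ pathwise, hence $C_t \ge t$ deterministically (the paper records this implicitly via $\langle \int_0^\cdot \cot\theta_s\,dB_s\rangle_t = C_t - t \ge 0$), so the event $\{C_t < t/2\}$ is empty, your split is unnecessary, and your route reduces to the two lines "change measure, then bound $(1+C_t)^{-p} \le (1+t)^{-p}$ and $\bbE^{\nu+a}[(\sin\theta_t)^{-a}] \lesssim 1$". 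That makes your argument arguably simpler than the paper's, though all the Cramér-type machinery in your last paragraph can be deleted. Two small inaccuracies, neither fatal: the integrability condition $-a+1+2(\nu+a) > -1$ simplifies to $a+2\nu > -2$, not to $\nu+a > -1$, and it is \emph{not} automatic from $\nu+a>0$ — it is exactly equivalent to the hypothesis ($\nu > -2$ or $\lambda < -2\nu-2$), which is therefore needed already at $q=1$ and not only "for room in the Hölder step"; since you assume that hypothesis anyway, the conclusion stands.
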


\begin{proof}
We split up into the events $\{ C_t \le \delta t \}$ and $\{ C_t \ge \delta t \}$ where $\delta > 0$ is a suitably chosen number. On the event $\{ C_t \ge \delta t \}$ we have
\[ \begin{split}
&\bbE^\nu[ \exp(\lambda C_t) (1+C_t)^{-p} \,1_{T_0 > t} 1_{C_t \ge \delta t}] \\
&\quad \le (1+\delta t)^{-p}\, \bbE^\nu[ \exp(\lambda C_t) \,1_{T_0 > t}] \\
&\quad \le (1+\delta t)^{-p} (\sin\theta_0)^{a} \exp\left((\lambda-\frac{a}{2})t\right) \,\bbE^{\nu+a}\left[(\sin\theta_t)^{-a} \right] ,
\end{split} \]
and the last expectation is bounded as in the proof of \cref{prop:bessel_clock_mixed_moments}.

For the event $\{ C_t \le \delta t \}$ we distinguish the cases $\nu \ge 0$ and $\nu < 0$. In case $\nu \ge 0$ we have $a \le 0$ and therefore (for $\lambda\delta < \lambda-\frac{a}{2}$; since $\lambda > 0$)
\[ \begin{split}
\bbE^\nu[ \exp(\lambda C_t) (1+C_t)^{-p} \,1_{T_0 > t} 1_{C_t \le \delta t}] 
&\le \exp(\lambda \delta t) \\
&\lesssim (\sin\theta_0)^a \exp\left((\lambda-\frac{a}{2})t\right) t^{-p} .
\end{split} \]
In case $\nu < 0$, applying \cref{prop:bessel_clock_moments_nul0}, we get (for $\lambda\delta+\nu < \lambda-\frac{a}{2}$)
\[ \begin{split}
\bbE^\nu[ \exp(\lambda C_t) (1+C_t)^{-p} \,1_{T_0 > t} 1_{C_t \le \delta t}] 
&\le \exp(\lambda \delta t)\,\bbE^\nu[1_{T_0 > t}] \\
&\le \exp(\lambda \delta t) (\sin\theta_0)^{-2\nu} e^{\nu t} \, \bbE^{-\nu}[(\sin\theta_t)^{2\nu}] \\
&\lesssim (\sin\theta_0)^a \exp\left((\lambda-\frac{a}{2})t\right) t^{-p}  
\end{split} \]
where we have used $-2\nu \ge a$ and $\nu < \lambda-\frac{a}{2}$.
\end{proof}

In the case of critical index $\nu=0$, the Bessel process almost hits the boundary, and the Bessel clock has much heavier tails.

\begin{proposition}\label{pr:bes_clock_critical}
If $\nu = 0$, then there exists $c>0$ such that
\[ \bbP^{\nu=0}(C_t > rt^2) \asymp_c (1+\abs{\log\sin\theta_0}/t)\,r^{-1/2} \]
for $r \ge (1+\abs{\log\sin\theta_0}/t)^2$ and $t\ge 1$.
\end{proposition}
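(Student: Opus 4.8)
The plan is to analyse the radial Bessel clock $C_t = \int_0^t(\sin\theta_s)^{-2}\,ds$ at the critical index $\nu=0$ by splitting the path of $\theta$ according to how close it gets to the boundary $\{0,\pi\}$. The key heuristic is that a large value of $C_t$ forces $\theta$ to spend a long excursion very near $0$ or $\pi$, and at $\nu=0$ such near-boundary excursions behave like those of a two-dimensional Bessel process (Brownian motion modulus), whose near-zero behaviour produces exactly the $r^{-1/2}$ tail via the scaling $\int (\sin\theta)^{-2}$ over an excursion to depth $\varepsilon$ contributing on the order of $\varepsilon^{-2}\cdot(\text{time}) \sim \varepsilon^{-2}\cdot\varepsilon^2/(\text{something})$, so that the clock increment over one near-boundary sojourn of duration $u$ is comparable to $u/\varepsilon^2$ and the probability of reaching depth $\varepsilon$ scales like $\varepsilon$ when $\nu=0$. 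To make this rigorous I would first reduce, via Proposition \ref{prop:bessel_density} and the exponential mixing, to a single time unit: it suffices to bound $\bbP^{\nu=0}(C_1 > \rho)$ for $\rho$ large (uniformly in a suitable range of $\theta_0$), because $C_t$ is a sum of $\lceil t\rceil$ comparable pieces with nearly i.i.d.\ law after one unit of burn-in, and $C_t > rt^2$ then translates (by a Chebyshev/union bound on the pieces, combined with the fact that a single piece dominates when the tail is polynomial) into one of order $t$ pieces exceeding $\asymp rt$; summing the single-piece tail $\asymp (rt)^{-1/2}$ over $t$ pieces gives $\asymp t\cdot(rt)^{-1/2} = t^{1/2}r^{-1/2}$, which is not quite the claimed bound --- so instead the correct reduction is that $C_t/t^2 > r$ forces the \emph{maximal} single near-boundary excursion (on the scale of the whole interval $[0,t]$) to be deep, and one should run the argument directly at scale $t$ rather than decomposing into unit pieces.

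Concretely, I would use the change-of-measure identity of Corollary \ref{prop:bessel_clock_mixed_moments_neg} or rather a direct excursion-theoretic estimate: let $T_\varepsilon = \inf\{s : \sin\theta_s = \varepsilon\}$ (depth $\varepsilon$) and $L_\varepsilon$ the last exit from depth $\varepsilon$ before time $t$. The event $\{C_t > rt^2\}$ is, up to constants and up to the harmless region where $\theta$ stays in the bulk (on which $C_t \lesssim t \ll rt^2$), contained in the event that $\theta$ makes an excursion to depth $\varepsilon := c\, r^{-1/2}$ for a suitable small $c$: indeed for the clock to accumulate $rt^2$ within time $t$, since $(\sin\theta_s)^{-2} \le \varepsilon^{-2}$ whenever $\sin\theta_s\ge\varepsilon$, the contribution from $\{\sin\theta_s \ge \varepsilon\}$ is at most $\varepsilon^{-2}t = c^{-2}r t \le \tfrac12 rt^2$ for $t\ge 1$ and $c$ small, so at least $\tfrac12 rt^2$ must come from times with $\sin\theta_s < \varepsilon$, which in particular requires $T_\varepsilon < t$. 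Conversely, on the event $T_\varepsilon < t$ one estimates, via the strong Markov property at $T_\varepsilon$ and the explicit near-boundary behaviour (the process $\log\sin\theta$ near $0$ is, at $\nu=0$, a Brownian motion with drift $-\tfrac12$, whose excursion above a level has the classical inverse-Gaussian/stable-$\tfrac12$ tails), that the clock does accumulate the right order with comparable probability, giving the matching lower bound. The probability $\bbP^{\nu=0}_{\theta_0}(T_\varepsilon < t)$ is computed via the scale function of the radial Bessel process: at $\nu=0$ the scale function is $s(\theta) = \log\tan(\theta/2)$ (or a bounded perturbation thereof near the boundary), so by optional stopping $\bbP_{\theta_0}(T_\varepsilon < T_{\pi-\varepsilon'}) \asymp$ the appropriate ratio, and combined with the time constraint $T_\varepsilon < t$ via the reflection principle for the driving Brownian motion this yields $\bbP^{\nu=0}_{\theta_0}(T_\varepsilon < t) \asymp (1 + |\log\sin\theta_0|/t)\cdot \varepsilon/1 \asymp (1+|\log\sin\theta_0|/t)\, r^{-1/2}$ in the stated range $r \ge (1+|\log\sin\theta_0|/t)^2$, where this lower restriction on $r$ is precisely what makes $\varepsilon = cr^{-1/2} \le 1/(1+|\log\sin\theta_0|/t)$ so that the hitting probability is genuinely small and of the claimed form rather than saturating at $1$.

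The main obstacle I expect is controlling the \emph{time} spent near the boundary conditionally on reaching depth $\varepsilon$, i.e.\ upgrading "$T_\varepsilon < t$" to "$C_t > rt^2$ with comparable probability" for the lower bound, and symmetrically ruling out (for the upper bound) the scenario where $\theta$ touches depth $\varepsilon$ only very briefly and hence contributes little to the clock. This requires a two-sided estimate on the amount of clock an excursion of $\log\sin\theta$ accumulates, which at $\nu=0$ is governed by the fact that $u \mapsto$ (clock increment) over an excursion of the drifted Brownian motion $\log\sin\theta$ is, after time-change, itself a stable-$\tfrac12$ type quantity; the drift $-\tfrac12$ is a nuisance that must be handled (it biases excursions to be short, but over the relevant depth scale $\varepsilon = cr^{-1/2}\to 0$ and the relevant time scale the drift is a lower-order correction absorbed into constants). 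I would handle this by a Girsanov change of measure removing the drift (turning $\theta$ near the boundary into a genuine $2$-dimensional Bessel process, i.e.\ $\nu$ shifted to make the index match $\delta=2$), for which the excursion measure and the clock-increment law are classical and scale exactly as needed, with the Radon--Nikodym derivative bounded above and below by constants on the relevant events since the total drift contribution over time $\le t$ against a process confined near the boundary is $O(1)$. Assembling the upper bound (sum over dyadic depth scales of the hitting probabilities, dominated by the critical scale $\varepsilon \asymp r^{-1/2}$) and the lower bound (single-scale construction at $\varepsilon \asymp r^{-1/2}$) then yields the two-sided $\asymp_c$ with a uniform constant.
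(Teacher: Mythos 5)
There is a genuine gap at the heart of your argument: the hitting-probability scaling you use at the critical index is wrong. At $\nu=0$ the scale function near the boundary is logarithmic (you write it yourself: $s(\theta)=\log\tan(\theta/2)$), so for $\theta_0$ in the bulk one has $\bbP_{\theta_0}(T_\varepsilon<t)\asymp \bigl(t+\abs{\log\sin\theta_0}\bigr)/\log(1/\varepsilon)\wedge 1$, \emph{not} $\asymp(1+\abs{\log\sin\theta_0}/t)\,\varepsilon$; the linear-in-$\varepsilon$ law you invoke belongs to processes with a linear scale function, not to the two-dimensional Bessel regime. Consequently the reduction of $\{C_t>rt^2\}$ to $\{T_{cr^{-1/2}}<t\}$, while a correct containment, can only give an upper bound of order $(t+\abs{\log\sin\theta_0})/\log r$, far weaker than $r^{-1/2}$; summing hitting probabilities over dyadic depths does not repair this, since those probabilities decay only like $1/\log(1/\varepsilon)$. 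The matching lower-bound claim also fails: conditionally on reaching depth $\varepsilon=cr^{-1/2}$, the clock accumulates $rt^2$ not ``with comparable probability'' but with probability that is itself polynomially small in $r$ (of order $\log r/(t\sqrt r)$ by the skew-product). The true mechanism producing the $r^{-1/2}$ tail is not a hitting probability at depth $r^{-1/2}$ at all: the depth relevant to $\{C_t>rt^2\}$ is exponentially small (of order $e^{-c\sqrt{r}\,t}$), and the $r^{-1/2}$ comes from the stable-$\tfrac12$ law of the clock accumulated over a single excursion, which is exactly what \cref{le:usual_bessel_clock} extracts via the skew-product representation ($\rho_t=\rho_0\exp(\beta_{C_t})$, so the excursion clock is a Brownian hitting time with tail $\asymp\abs{\log R}\,r^{-1/2}$). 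Your two incorrect inputs (wrong depth scale, wrong hitting law) happen to multiply to the right order, but neither step can be justified.

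Two further structural points. First, your Girsanov comparison with the drift-free two-dimensional Bessel process is only valid with constants when localised: the exponent in the comparison martingale is of order the duration of the excursion (bounded on unit time intervals and away from $\pi$), not $O(1)$ over the whole interval $[0,t]$; this is why the paper first proves the case $t=1$ excursion by excursion and only then passes to $t\ge1$. Second, you correctly observe that the naive union bound over unit pieces at level $rt$ loses a factor $t^{1/2}$, but the remedy is not to abandon the decomposition: it is the one-big-jump principle. On the complement of $\{\max_n(C_n-C_{n-1})>rt^2\}$ one truncates the increments at $rt^2$; each truncated increment has mean $\lesssim r^{1/2}t$, so Markov's inequality bounds $\bbP(\sum_n I_n>rt^2)$ by $\lesssim r^{-1/2}$, and the union bound for a single increment exceeding $rt^2$ (not $rt$) gives $t\cdot(rt^2)^{-1/2}=r^{-1/2}$, with the $\abs{\log\sin\theta_0}/t$ correction coming only from the first piece after using \cref{prop:bessel_density} for the later starting angles. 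This truncation step, together with the excursion-level stable-$\tfrac12$ tail, is what your proposal is missing and what cannot be replaced by hitting-probability estimates alone.
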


We prove this by comparing it to the usual Bessel process.

\begin{lemma}\label{le:usual_bessel_clock}
Let $\rho_t$ be a (usual) Bessel process of index $0$, i.e.
\[ d\rho_t = dB_t+\frac{1}{2}\rho_t^{-1}\,dt , \]
and $\sigma_R = \inf\{t \mid \rho_t = R\rho_0\}$, then
\[ \int_0^{\sigma_R} \rho_t^{-2}\,dt \]
has the same law as $T_{\log R} = \inf\{t \mid B_t=\log R\}$ (with $B$ a standard Brownian motion with $B_0=0$), i.e.\@ it has the density
\[ f(r) = \frac{\abs{\log R}}{\sqrt{2\pi r^3}} \exp\left( -\frac{(\log R)^2}{2r} \right) . \]
\end{lemma}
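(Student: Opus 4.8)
The plan is to reduce the statement to the classical first-passage density of Brownian motion by means of a time change. Set $Y_t = \log\rho_t$, which is well-defined for all $t \ge 0$ because a Bessel process of index $0$ (dimension $2$) started from $\rho_0 > 0$ stays strictly positive. Itô's formula gives
\[
dY_t = \frac{1}{\rho_t}\,d\rho_t - \frac{1}{2\rho_t^2}\,dt = \frac{1}{\rho_t}\,dB_t ,
\]
so $Y$ is a continuous local martingale with quadratic variation $\langle Y\rangle_t = A_t \defeq \int_0^t \rho_s^{-2}\,ds$ — precisely the functional we want to identify in law. By the Dambis--Dubins--Schwarz theorem there is a standard Brownian motion $W$ with $W_0 = 0$ such that $Y_t = \log\rho_0 + W_{A_t}$ for all $t$.

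The key preliminary step is to check that $A$ is a continuous, strictly increasing bijection of $[0,\infty[$ onto $[0,\infty[$. Strict monotonicity is immediate since $\rho_t \in {]0,\infty[}$ for all $t$; and $A_\infty = \infty$ almost surely, for otherwise $Y_t = \log\rho_0 + W_{A_t}$ would converge to a finite limit as $t\to\infty$, contradicting the neighbourhood recurrence of the dimension-$2$ Bessel process. The same recurrence shows $\sigma_R < \infty$ almost surely. Consequently $A$ admits a continuous increasing inverse on all of $[0,\infty[$.

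Finally I would push the stopping time through the DDS identity: since
\[
\sigma_R = \inf\{t \mid \rho_t = R\rho_0\} = \inf\{t \mid Y_t = \log\rho_0 + \log R\} = \inf\{t \mid W_{A_t} = \log R\}
\]
and $t\mapsto A_t$ is a continuous increasing bijection, we obtain $A_{\sigma_R} = \inf\{s \ge 0 \mid W_s = \log R\} = T_{\log R}$. Hence $\int_0^{\sigma_R}\rho_t^{-2}\,dt = A_{\sigma_R} = T_{\log R}$, and the asserted density $f(r) = \frac{\abs{\log R}}{\sqrt{2\pi r^3}}\exp(-(\log R)^2/(2r))$ is exactly the first-passage density of a standard Brownian motion to the level $\log R$, which follows from the reflection principle. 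The only point requiring genuine care is the bijectivity of $A$ (equivalently $A_\infty = \infty$ and $\sigma_R<\infty$), needed so that $A_{\sigma_R}$ is literally the Brownian hitting time of $\log R$; the rest is routine.
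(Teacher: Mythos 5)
Your proof is correct and follows essentially the same route as the paper: the identity $Y_t=\log\rho_0+W_{A_t}$ you derive via It\^o and Dambis--Dubins--Schwarz is exactly the skew-product representation $\rho_t=\rho_0\exp(\beta_{\tilde C_t})$ that the paper cites from Revuz--Yor, after which both arguments conclude by transferring the hitting time through the continuous increasing clock. Your extra care about $A_\infty=\infty$ and $\sigma_R<\infty$ just makes explicit what the paper leaves implicit.
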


\begin{proof}
Write $\tilde C_t = \int_0^t \rho_s^{-2}\,ds$. By the skew-product representation of $2$d Brownian motion (cf.\@ \cite[Theorem V.2.11]{ry-stochastic-book}), we may write $\rho_t = \rho_0\exp(\beta_{\tilde C_t})$ with a Brownian motion $\beta$. Since $\tilde C_t$ is continuously increasing, we have $\tilde C_{\sigma_R} = T_{\log R}$. The claim follows.
\end{proof}

\begin{proof}[Proof of \cref{pr:bes_clock_critical}]
\textbf{Step 1.} We first prove the case $t=1$.
By symmetry, we can assume $\theta_0 \le \pi/2$. Consider stopping times
\begin{align*}
\tau_1 &= \inf\{ t \ge 0 \mid \theta_t \ge \pi-1 \},\\
\tau_2 &= \inf\{ t \ge \tau_1 \mid \theta_t \le 1 \},\\
\tau_3 &= \inf\{ t \ge \tau_2 \mid \theta_t \ge \pi-1 \},\\
&\cdots
\end{align*}
and so on. There is at most a geometrically distributed number of such excursions on the time interval $[0,1]$. For each such excursion, we can compute the density of $\theta$ with respect to the usual Bessel process.

Let $d\rho_t = dB_t+\frac{1}{2}\rho_t^{-1}\,dt$ be a usual Bessel process, and $m(x) = \frac{1}{2}(\cot(x)-x^{-1})$. Let $\Phi(x) = \exp\left( \int_0^x m(y)\,dy \right)$ be the solution of $\Phi'(x)=m(x)\Phi(x)$, $\Phi''(x)=(m(x)^2+m'(x))\Phi(x)$. We have $m(x) = O(x)$ and $m'(x) = O(1)$ as $x \searrow 0$. Let
\[ M_t = \frac{\Phi(\rho_t)}{\Phi(\rho_0)}\exp\left( -\frac{1}{2} \int_0^t (m(\rho_s)\rho_s^{-1}+m(\rho_s)^2+m'(\rho_s))\,ds \right) . \]
Then under the probability measure $M_\tau\cdot\bbP$, the process $\rho$ has the law of a radial Bessel process. Moreover, as long as $\rho_s \le \pi-1$ (say), the integrands in $M_t$ are bounded.

Using this martingale, we can now compare to \cref{le:usual_bessel_clock} via a change of measure. Supposing $\rho_0 = \theta_0 \le 1$, we have
\[ \begin{split}
\bbP\left( \int_0^{\tau_1 \wedge 1} (\sin\theta_s)^{-2} \,ds > r \right)
&\le \bbP\left( \int_0^{\tau_1} \theta_s^{-2} \,ds > r-2 \right) \\
&= \bbE\left[ M_{\tau_1}\, 1_{\int_0^{\tau_1} \rho_s^{-2} \,ds > r-2} \right] \\
&\asymp \bbP\left( \int_0^{\tau_1} \rho_s^{-2} \,ds > r-2 \right) \\
&\asymp (\log\frac{2}{\rho_0}) r^{-1/2} .
\end{split} \]
If the process makes $n$ such excursions within time $1$, then $\int_0^1 (\sin\theta_s)^{-2} \,ds > r$ implies $\int_{\tau_{i-1} \wedge 1}^{\tau_i \wedge 1} (\sin\theta_s)^{-2} \,ds > r/n$ for some $i$. Therefore, by the strong Markov property,
\[ \begin{split}
\bbP\left( \int_0^1 (\sin\theta_s)^{-2} \,ds > r \right) 
&\le \sum_{n \in \bbN} p^n n \,\bbP\left( \int_0^{\tau_1 \wedge 1} (\sin\theta_s)^{-2} \,ds > r/n \right) \\
&\lesssim (\log\frac{2}{\theta_0}) r^{-1/2} \end{split} \]
which proves the upper bound.

For the lower bound it suffices to assume $\theta_0 = \rho_0 \le 1/2$. It remains to show that
\[ \bbP\left( \int_0^{{\tau_1} \wedge 1} \rho_s^{-2} \,ds > r \right) \asymp \bbP\left( \int_0^{\tau_1} \rho_s^{-2} \,ds > r \right) . \]
Recall that
\[ \bbP_{\rho_0}\left( \int_0^{\tau_1} \rho_s^{-2} \,ds > 2r \right) = \int_r^\infty \frac{2^{-1/2}\log\frac{\pi-1}{\rho_0}}{\sqrt{2\pi v^3}} \exp\left( -\frac{(2^{-1/2}\log\frac{\pi-1}{\rho_0})^2}{2v} \right) \,dv . \]
Observe that
\[ \int_0^{\tau_1} \rho_s^{-2} \,ds = \int_0^{{\tau_1}\wedge 1} \rho_s^{-2} \,ds + \int_{{\tau_1}\wedge 1}^{\tau_1} \rho_s^{-2} \,ds \]
and that
\[ \bbP_{\rho_0}\left( \int_{{\tau_1}\wedge 1}^{\tau_1} \rho_s^{-2} \,ds > r \mmiddle| \mathcal F_1 \right) = 1_{{\tau_1}>1} \bbP_{\rho_1}\left( \int_0^{\tau_1} \rho_s^{-2} \,ds > r \right) . \]
Therefore, recalling that the transition probability of the Bessel process is $p_1(0,y) = y\exp(-y^2/2)$ (cf.\@ \cite[p.\@ 446]{ry-stochastic-book}), we have
\[ \begin{split}
\bbP_{\rho_0}\left( \int_{{\tau_1}\wedge 1}^{\tau_1} \rho_s^{-2} \,ds > r \right) 
&\le \bbP_{0}\left( \int_{{\tau_1}\wedge 1}^{\tau_1} \rho_s^{-2} \,ds > r \right) \\
&\le \int_0^{\pi-1} p_1(0,y)\, \bbP_y\left( \int_0^{\tau_1} \rho_s^{-2} \,ds > r \right)\,dy \\
&= \int_r^\infty \int_0^{\pi-1} y\exp(-y^2/2) \\
&\hphantom{= \int_r^\infty \int_0^{\pi-1}} \frac{\log\frac{\pi-1}{y}}{\sqrt{2\pi v^3}} \exp\left( -\frac{(\log\frac{\pi-1}{y})^2}{2v} \right) \,dy\,dv \\
&< \frac{9}{10}\,\bbP_{1/2}\left( \int_0^{\tau_1} \rho_s^{-2} \,ds > 2r \right) 
\end{split} \]
where the last inequality can be seen by a direct computation.
It follows that with probability at least $\frac{1}{10}\,\bbP_{\rho_0}\left( \int_0^{\tau_1} \rho_s^{-2} \,ds > 2r \right)$ we have
\[ \int_0^{{\tau_1}\wedge 1} \rho_s^{-2} \,ds = \int_0^{\tau_1} \rho_s^{-2} \,ds - \int_{{\tau_1}\wedge 1}^\tau \rho_s^{-2} \,ds > 2r-r = r . \]
This proves the lower bound.

\textbf{Step 2.} We deduce the result of \cref{pr:bes_clock_critical} for $t \ge 1$ from the statement for $t=1$. This is essentially the well-known fact that sums of i.i.d.\@ centred random variables with infinite second moment behave like their maximum.

It suffices to assume $t \in \bbN$. The lower bound follows immediately from the result for $t=1$ since (applying the conclusion to $C_n-C_{n-1}$ and estimating $\abs{\log\sin\theta_{n-1}} \ge 0$)
\[ \begin{split}
\bbP( C_t \le rt^2 )
&\le \bbP( C_n-C_{n-1} \le rt^2 \text{ for all } n=1,...,t ) \\
&\le (1-c(1+\abs{\log\sin\theta_{0}})\,r^{-1/2}t^{-1})(1-cr^{-1/2}t^{-1})^{t-1} \\
&\le \exp\left(-c(1+\abs{\log\sin\theta_0}/t)\,r^{-1/2} \right) \\
&\le 1-\tilde c(1+\abs{\log\sin\theta_0}/t)\,r^{-1/2}
\end{split} \]
given that $r \ge (1+\abs{\log\sin\theta_0}/t)^2$.

To prove the upper bound, we first bound their maximal increment. Applying the result for $t=1$ and \cref{prop:bessel_density}, we have
\begin{align}
\bbP( C_n-C_{n-1} > rt^2 \text{ for some } n=1,...,t ) 
&\lesssim \sum_{n=1,...,t} \bbE(1+\abs{\log\sin\theta_{n-1}})\,r^{-1/2}t^{-1} \nonumber\\
&\lesssim (1+\abs{\log\sin\theta_0}/t)\,r^{-1/2} .\label{eq:max_upper}
\end{align}
Next, we show that on the event where the maximal increment is bounded, the sum is not likely exceed the bound either.
For this, we consider the random variables
\[ I_n \defeq (C_n-C_{n-1})1_{C_n-C_{n-1} \le rt^2} . \]
Applying again the result for $t=1$ and \cref{prop:bessel_density}, we get
\[ \begin{split}
\bbE I_n &\lesssim \bbE \int_0^{rt^2} \bbP( C_n-C_{n-1} > x \mid \cF_{n-1} ) \,dx \\
&\lesssim \bbE (1+\abs{\log\sin\theta_{n-1}})\, (rt^2)^{1/2} \\
&\lesssim \begin{cases}
r^{1/2}t & \text{for }n>1,\\
(1+\abs{\log\sin\theta_{0}})\,r^{1/2}t & \text{for }n=1.
\end{cases}
\end{split} \]
Hence,
\[ \bbP\left( \sum_{n=1,...,t} I_n > rt^2 \right) \le r^{-1}t^{-2}\sum_{n=1,...,t} \bbE I_n \lesssim (1+\abs{\log\sin\theta_0}/t)\,r^{-1/2} . \]
Combining this with \eqref{eq:max_upper}, this shows the upper bound.
\end{proof}

\section{Regularity of the trace}
\label{sec:regularity_proofs}

\subsection{Warmup: Existence of the trace}
\label{se:pf_warmup}

In order to illustrate the general idea of our proof, let us give a simple proof showing \slek{}, $\kappa \neq 8$, generates a continuous trace. The (more technical) proofs that come later are based on the idea that we describe in the following. We remark that the content of this subsection is mainly for illustration, and not required for the rest of the paper (although it greatly helps understanding what comes after).

By \cite[Corollary 3.12]{vl-sle-hoelder}, in order to have a continuous trace, it suffices to show $\abs{\hat f_t'(iv)} \le v^{-\beta}$ for some $\beta < 1$ (uniformly in $t \in [0,T]$ and small $v$). Due to Koebe's distortion theorem, it suffices to show this for $v=e^{-m}$, $m \in \bbN$.

We restrict to the set $\{ \norm{\xi}_\infty \le M \}$. Fix $m \in \bbN$. Suppose that $\abs{\hat f_t'(ie^{-m})} \ge e^{\beta m}$ for some $t \in [0,T]$. By \cref{thm:fw_grid}, there exists $z \in H(e^{-(1-\beta)m},M,T)$ such that $\abs{Z_t(z)-ie^{-m}} \le e^{-m}/2$ and $\Upsilon_t(z) \gtrsim e^{-(1-\beta)m}$. Recalling the parametrisation by conformal radius, this means $t \le \sigma(\delta m)$ where $\delta=\frac{1-\beta}{4}$. Therefore
\begin{multline}\label{eq:simple_pf_sum}
\bbP( \abs{\hat f_t'(ie^{-m})} \ge e^{\beta m} \text{ for some } t \in [0,T] ) \\
\le \sum_{z \in H(e^{-(1-\beta)m},M,T)} \bbP\left( Y_{\sigma(s)}(z) \asymp_c e^{-m} \text{ and } \frac{\abs{X_{\sigma(s)}}}{Y_{\sigma(s)}} \le 1 \text{ for some } s \le \delta m \right) . 
\end{multline}
If the sum of the probabilities decays exponentially in $m$, then by Borel-Cantelli we are done.

For $z=x+iy$, recall from \cref{se:prelim_general} that
\[ Y_{\sigma(s)}(z) = y \exp\left( -2 \int_{s_0}^s (\sin\hat\theta_s)^{-2} \, ds \right) \]
and from \cref{se:prelim_bm} that $\hat\theta$ is a radial Bessel process of index $\nu = \frac{1}{2}-\frac{4}{\kappa}$ started at $\hat\theta_{s_0} = \cot^{-1}(x/y)$ (where $s_0 = -\frac{1}{4}\log y$) and run at speed $\kappa s$. In particular, we can write
\[ Y_{\sigma(s)}(z) = y \exp\left( -\frac{2}{\kappa} C_{\kappa(s-s_0)} \right) \]
where $C$ denotes the radial Bessel clock defined in \cref{se:bessel}.

For $\kappa \neq 8$, we have $\nu \neq 0$. Therefore the probability on the right-hand side of \eqref{eq:simple_pf_sum} can be estimated by \cref{prop:bessel_clock_mixed_moments}. Let
\[ \tau \defeq \inf\left\{ s \in [s_0,\delta m] \mid Y_{\sigma(s)}(z) \asymp_c e^{-m} \text{ and } \hat\theta_s \in [\cot^{-1}(\pm 1)] \right\} \wedge (\delta m+1) . \]
Fix some $\lambda \in {]0,\frac{\nu^2}{2}[}$. We have
\[ \begin{split}
\bbP( \tau \le \delta m ) 
&\asymp y^{-\lambda}e^{-\lambda m} \bbE\left[ \exp\left( \frac{2\lambda}{\kappa} C_{\kappa(\tau-s_0)} \right) 1_{\tau \le \delta m} \right] \\
&\lesssim y^{-\lambda}e^{-\lambda m} (\sin\cot^{-1}(x/y))^a \exp\left(\left(\frac{2\lambda}{\kappa}-\frac{a}{2}\right)^+ \kappa(\delta m-s_0)\right) \\
&= e^{-\lambda m} \exp\left((2\lambda-\frac{a\kappa}{2})^+ \delta m \right) y^{-\lambda+(\lambda/2-a\kappa/8)^+}(1+\abs{x}/y)^{-a}
\end{split} \]
where $a=-\nu+\sqrt{\nu^2-\frac{4\lambda}{\kappa}}$ and $\eta^+ = \eta \vee 0$ denotes the positive part.

Picking $\beta$ close to $1$ (i.e.\@ $\delta$ close to $0$), we see that after summing in $z \in H(e^{-(1-\beta)m})$ that (according to \cref{le:fw_sum_grid}) the sum \eqref{eq:simple_pf_sum} is bounded by $e^{-\lambda m + \varepsilon m}$ where we can make $\varepsilon > 0$ as small as we want. This is summable in $m$, which is exactly what we wanted to show.

\subsection{Setup of our proofs}
\label{se:pf_setup}

We turn to the proofs of the main results of the paper. They follow the same idea as the previous subsection, but require much more care. We begin by discussing the technical setup. Recall the notations $\Upsilon_t$, $\sigma$, and $H(h,M,T)$ introduced in \cref{se:prelim_general}.

Let $v \in {]0,1]}$, $t \in [0,T]$, and find $\bar s = \bar s(t,v) \in \bbN$ such that $\Upsilon_t(\hat f_t(iv)) \in [e^{-4\bar s},e^{-4(\bar s-1)}]$. By \cref{le:distortion,thm:fw_grid}, there exists $z \in H(e^{-4\bar s},\norm{\xi},T)$ with $\Upsilon_t(z) \in [\frac{27}{160}e^{-4\bar s}, \frac{125}{32}e^{-4(\bar s-1)}]$ and $\abs{Z_t(z)-iv} \le v/2$. For $v=2^{-m}$, call this point $z(t,m)$.

By construction, $t = \sigma(s,z(t,m))$ for some $s = -\frac{1}{4}\log\Upsilon_t(z) \in [\bar s-2, \bar s+1]$.

For $z \in \bbH$ and $\bar s \in \bbN$, we consider the set
\[
P(z,\bar s) \defeq \left\{ (m,t) \in \bbN \times [0,T] \mmiddle| \begin{array}{cc}
\abs{Z_{\sigma(s,z)}(z) - i2^{-m}} \le 2^{-m-1} \text{ and } \\
\sigma(s,z) = t \text{ for some } s \in [\bar s-2, \bar s+1] 
\end{array} \right\} .
\]
Let $P'(z,\bar s)$ be any subset of $P(z,\bar s)$ such that any two $(m,t_1),(m,t_2) \in P'(z,\bar s)$ satisfy $\abs{t_1-t_2} \ge 2^{-2m}$. Let $N(z,\bar s)$ be the largest possible cardinality of such $P'(z,\bar s)$.

Let us remark here that if the process $\hat\theta$ dies before time $\bar s-2$, then $N(z,\bar s) = 0$.

To count $N(z,\bar s)$, we define a sequence of stopping times $S_n = S_n(z,\bar s)$, $T_n = T_n(z,\bar s)$ as follows. Fix some $b > 1$ (the exact value does not matter). Let
\[ S_0 \defeq \inf\left\{ s \in [\bar s-2,\bar s+1] \mmiddle| \frac{X_{\sigma(s)}^2}{Y_{\sigma(s)}^2} \le 1 \right\} , \]
and inductively
\begin{align*}
T_n &\defeq \inf\left\{ s \in [S_n,\bar s+1] \mmiddle| \frac{X_{\sigma(s)}^2}{Y_{\sigma(s)}^2} \ge b \right\} \wedge (\bar s+1) ,\\
S_{n+1} &\defeq \inf\left\{ s \in {]T_n,\bar s+1]} \mmiddle| \frac{X_{\sigma(s)}^2}{Y_{\sigma(s)}^2} \le 1 \right\} .
\end{align*}
Let
\[
P_n(z,\bar s) \defeq \left\{ (m,t) \in \bbN \times [0,T] \mmiddle| \begin{array}{cc} 
\abs{Z_{\sigma(s,z)}(z) - i2^{-m}} \le 2^{-m-1} \text{ and } \\
\sigma(s,z) = t \text{ for some } s \in [S_n,T_n] \end{array} \right\} ,
\]
and note that $P(z,\bar s) = \bigcup_n P_n(z,\bar s)$. Similarly to above, let $P'_n(z,\bar s)$ be any subset of $P_n(z,\bar s)$ such that any two $(m,t_1),(m,t_2) \in P'_n(z,\bar s)$ satisfy $\abs{t_1-t_2} \ge 2^{-2m}$.

 Let $N_n(z,\bar s)$ be the largest possible cardinality of such $P'_n(z,\bar s)$.

Moreover, note that $\frac{X_{\sigma(s)}^2}{Y_{\sigma(s)}^2} \le b$ on any interval $[S_n,T_n]$. Letting
\[ p = \bbP\left( \left. \frac{X_{\sigma(s)}^2}{Y_{\sigma(s)}^2} > b \text{ for some } s \in [0,3]\ \right|\ \frac{X_{\sigma(0)}^2}{Y_{\sigma(0)}^2} = 1 \right) \in {]0,1[} , \]
we see that $\bbP( S_n < \infty ) \le p^n$.

\begin{lemma}\label{le:max_num_times}
There exists $N \in \bbN$ such that $N_n(z,\bar s) \le N$ for any $z,\bar s,n$.
\end{lemma}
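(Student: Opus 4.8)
The claim is that the number of ``jumps'' $N_n(z,\bar s)$ — the maximal number of times the forward flow from $z$ can come close to a point $i2^{-m}$ on the interval $[S_n,T_n]$, counted with a $2^{-2m}$ time separation — is bounded by a universal constant $N$. The plan is to exploit two deterministic facts. First, on the interval $[S_n,T_n]$ we have $X_{\sigma(s)}^2/Y_{\sigma(s)}^2 \le b$ by construction, so \eqref{eq:sigma_dynamics} gives $d\sigma(s) \le Y_{\sigma(s)}^2(1+b)^2\,ds$, and \eqref{eq:Ysigma_dynamics} gives $\partial_s Y_{\sigma(s)}^2 = -4Y_{\sigma(s)}^2(1+X^2/Y^2) \in [-4(1+b)Y_{\sigma(s)}^2,-4Y_{\sigma(s)}^2]$; in particular $Y_{\sigma(s)}^2$ decreases at a controlled exponential rate on $[S_n,T_n]$.

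\textbf{Step 1: group the pairs by $m$.} If $(m,t_1),(m,t_2)\in P_n'(z,\bar s)$ with $t_1<t_2$, then $\abs{Z_{\sigma(s_i,z)}(z)-i2^{-m}}\le 2^{-m-1}$ for corresponding $s_i\in[S_n,T_n]$, so $Y_{\sigma(s_i)}(z)\in[2^{-m-1},3\cdot 2^{-m-1}]$. Since $Y_{\sigma(s)}^2$ is monotone decreasing in $s$ (by \eqref{eq:Ysigma_dynamics}), all the $s$-values producing a given $m$ lie in a sub-interval of $[S_n,T_n]$ on which $Y_{\sigma(s)}$ ranges within a factor $3$; by the lower bound $\partial_s Y_{\sigma(s)}^2 \ge -4(1+b)Y_{\sigma(s)}^2$ this sub-interval has $s$-length at most $\tfrac{1}{4(1+b)}\log 9$, a universal constant $L$. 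During that sub-interval, $\sigma$ increases by at most $\int Y_{\sigma(s)}^2(1+b)^2\,ds \le (1+b)^2 (2^{-m})^2 \cdot (\text{something}) $ — more precisely, bounding $Y_{\sigma(s)}^2 \le 9\cdot 2^{-2m-2}$ and the $s$-length by $L$ gives that the total $t$-increase over all $s$ producing this $m$ is at most $C_1 2^{-2m}$ for a universal $C_1=9(1+b)^2L/4$. Since consecutive $t$-values with the same $m$ differ by at least $2^{-2m}$, this forces at most $C_1+1$ pairs with that value of $m$ — a universal bound, call it $N_0$.

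\textbf{Step 2: bound the number of relevant $m$'s.} It remains to show only boundedly many values of $m$ occur in $P_n'(z,\bar s)$. The largest $m$ is controlled because $Y_{\sigma(s)}\ge Y_{\sigma(T_n)}$ on $[S_n,T_n]$; the smallest $m$ because $Y_{\sigma(S_0)}$ is bounded above (we only record $(m,t)$ with $Z_{\sigma(s)}$ within $2^{-m-1}$ of $i2^{-m}$, and on $[S_n,T_n]$, $Y_{\sigma(s)}\le Y_{\sigma(S_n)}$). So the spread of $m$ is $\tfrac{1}{\log 2}\log\bigl(Y_{\sigma(S_n)}/Y_{\sigma(T_n)}\bigr)+O(1)$. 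Now use the upper bound $\partial_s Y_{\sigma(s)}^2 \le -4 Y_{\sigma(s)}^2$, i.e. $Y_{\sigma(s)}^2$ can shrink by at most $e^{-4(s-S_n)}$, but more importantly we need the right direction: on $[S_n,T_n]\subseteq[\bar s-2,\bar s+1]$ the $s$-length is at most $3$, and the \emph{lower} bound $\partial_s Y_{\sigma(s)}^2\ge -4(1+b)Y_{\sigma(s)}^2$ gives $Y_{\sigma(T_n)}^2 \ge Y_{\sigma(S_n)}^2 e^{-4(1+b)\cdot 3}$, so $\log(Y_{\sigma(S_n)}/Y_{\sigma(T_n)}) \le 6(1+b)$, a universal constant. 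Hence at most $N_1 = \lceil 6(1+b)/\log 2\rceil + O(1)$ values of $m$ occur, and $N_n(z,\bar s) \le N_0 N_1 =: N$, independent of $z,\bar s,n$.

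\textbf{Main obstacle.} The only delicate point is bookkeeping the constants and making sure the $s$-interval $[S_n,T_n]$ is genuinely of bounded length — this is guaranteed since $S_n,T_n\in[\bar s-2,\bar s+1]$ by definition, so $T_n-S_n\le 3$ always; combined with the two-sided control $\partial_s Y_{\sigma(s)}^2\in[-4(1+b)Y^2,-4Y^2]$ from \eqref{eq:Ysigma_dynamics} and the bound $X^2/Y^2\le b$ on this interval, everything is forced. I would present Steps 1 and 2 essentially as above, carefully choosing the universal constant $N$ at the end.
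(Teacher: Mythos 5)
Your proof is correct and follows essentially the same route as the paper: on $[S_n,T_n]$ one has $X_{\sigma(s)}^2/Y_{\sigma(s)}^2\le b$ and $T_n-S_n\le 3$, so \eqref{eq:Ysigma_dynamics} confines $Y_{\sigma(s)}$ to a bounded multiplicative range (hence boundedly many values of $m$) while \eqref{eq:sigma_dynamics} bounds the increase of $\sigma$ by a constant times $2^{-2m}$ (hence boundedly many $2^{-2m}$-separated times per $m$), exactly the two ingredients of the paper's argument. One small slip in Step 1: the upper bound on the $s$-length of the band where $Y_{\sigma(s)}\in[2^{-m-1},3\cdot 2^{-m-1}]$ comes from the \emph{upper} bound $\partial_s Y_{\sigma(s)}^2\le -4Y_{\sigma(s)}^2$ (giving length at most $\tfrac14\log 9$), not from the lower bound $\partial_s Y_{\sigma(s)}^2\ge -4(1+b)Y_{\sigma(s)}^2$ you cite (which only prevents the band from being too short); this changes your constant $L$ but not the universality of the final bound $N$.
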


\begin{proof}
Write $\tilde b = 1+b$. If $1+\frac{X_{\sigma(s)}^2}{Y_{\sigma(s)}^2} \le \tilde b$, then by \eqref{eq:Ysigma_dynamics} we have
$dY_{\sigma(s)}^2 \ge -4\tilde b Y_{\sigma(s)}^2 \,ds$
and hence (by Grönwall's inequality)
\[ Y_{\sigma(s)} \ge e^{-2\tilde b(s-\bar s)}Y_{\sigma(\bar s)} . \]
Moreover, by \eqref{eq:sigma_dynamics} we have
$d\sigma(s) \le \tilde b^2 Y_{\sigma(s)}^2 \,ds$
and hence 
\[ \sigma(s) \le \sigma(\bar s)+(s-\bar s)\tilde b^2 Y_{\sigma(\bar s)}^2 . \]

Suppose now that $(m_0,t_0) \in P_n(z,\bar s)$, i.e.\@ $\abs{Z_{\sigma(s,z)}(z) - i2^{-m_0}} \le 2^{-m_0-1}$ and $\sigma(s,z) = t_0$ for some $s \in [S_n,T_n]$. In particular, $Y_{\sigma(s)} \in [2^{-m_0-1}, 2^{-m_0+1}]$.

If we find another pair $(m,t) \in P_n(z,\bar s)$, then (by our previous observation) we must have $2^{-m+1} \ge e^{-6\tilde b}2^{-m_0-1}$ and $t \le t_0+3\tilde b^2 2^{-2m_0+2}$. But there is a fixed maximum number $N$ of such pairs $(m,t)$ where the $t$ also have distance at least $2^{-2m}$ from each other (and that number $N$ does not depend on $m_0$ or $n$).
\end{proof}

We are going to need one more addition to this, the reason of which will become apparent at the end of the proof of \cref{thm:gvar,thm:hoelder}. Recall $\xi(t) = \sqrt{\kappa}B_t$.

\begin{lemma}\label{lem:brownian_increment}
Let $z\in \bbH$, $\bar s \in \bbN$. For every $n$ there exists a random variable $M_n(z,\bar s)$ that is independent of $\mathcal F_{\sigma(S_n)}$ and such that for every $(m,t) \in P_n(z,\bar s)$ and $u > t$ with $\abs{u-t} \in [2^{-2(m+1)},2^{-2(m-1)}]$ we have
\[ \frac{\abs{\xi(u)-\xi(t)}}{\abs{u-t}^{1/2}} \le M_n(z,\bar s) . \]
Moreover, each $M_n(z,\bar s)$ has the same law and has all exponential moments.
\end{lemma}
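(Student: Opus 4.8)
The plan is to pin down, for each $n$, a time window $[\sigma(S_n),\sigma(S_n)+C_0 w^2]$ of length comparable to $w^2 \defeq Y_{\sigma(S_n)}(z)^2$ that contains every time $t$ occurring in $P_n(z,\bar s)$ together with every admissible $u$, and then to define $M_n(z,\bar s)$ from the oscillation of $\xi$ over that window, using Brownian scaling so that the result is independent of $\cF_{\sigma(S_n)}$ and has a law not depending on $n,z,\bar s$.

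\emph{Localisation.} On $[S_n,T_n]$ we have $1+\frac{X_{\sigma(s)}^2}{Y_{\sigma(s)}^2}\le \tilde b\defeq 1+b$, exactly as in the proof of \cref{le:max_num_times}. By \eqref{eq:Ysigma_dynamics} the process $Y_{\sigma(s)}^2$ is then non-increasing and drops by a factor at most $e^{12\tilde b}$ over $[S_n,T_n]$ (whose length is $\le 3$), so $Y_{\sigma(s)}\asymp w$ there with implicit constant depending only on $b$. If $(m,t)\in P_n(z,\bar s)$ then $Y_{\sigma(s)}(z)\in[2^{-m-1},2^{-m+1}]$ at the corresponding $s$, hence $2^{-m}\asymp w$; in particular the admissible range $\abs{u-t}\in[2^{-2(m+1)},2^{-2(m-1)}]$ forces $\abs{u-t}^{1/2}\ge 2^{-m-1}\ge c_0 w$ and $\abs{u-t}\lesssim w^2$. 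Finally \eqref{eq:sigma_dynamics} together with $Y_{\sigma(s)}\le w$ gives $d\sigma(s)\le\tilde b^2 w^2\,ds$, so $\sigma(T_n)-\sigma(S_n)\le 3\tilde b^2 w^2$; since $t\le\sigma(T_n)$ and $u=t+\abs{u-t}$, every such $t$ and every admissible $u$ lie in $[\sigma(S_n),\sigma(S_n)+C_0 w^2]$ for a constant $C_0$ depending only on $b$.

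\emph{Scaling and the definition of $M_n$.} Note that $\sigma(S_n)$ is an $\cF$-stopping time (being $\sigma$ evaluated at the $\cF_{\sigma(\cdot)}$-stopping time $S_n$) and that $w=\Im g_{\sigma(S_n)}(z)$ is $\cF_{\sigma(S_n)}$-measurable. By the strong Markov property, $r\mapsto\xi(\sigma(S_n)+r)-\xi(\sigma(S_n))$ is $\sqrt{\kappa}$ times a standard Brownian motion independent of $\cF_{\sigma(S_n)}$; hence, by Brownian scaling, conditionally on $\cF_{\sigma(S_n)}$ the process $\beta_r\defeq w^{-1}\bigl(\xi(\sigma(S_n)+w^2 r)-\xi(\sigma(S_n))\bigr)$ has a law ($\sqrt{\kappa}$ times standard Brownian motion) that does \emph{not} depend on the value of $w$, so $\beta$ is independent of $\cF_{\sigma(S_n)}$. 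I then set
\[ M_n(z,\bar s)\defeq c_0^{-1}\sup_{r,r'\in[0,C_0]}\abs{\beta_r-\beta_{r'}} . \]
This depends only on $\beta$, hence is independent of $\cF_{\sigma(S_n)}$; it has the same law for all $n,z,\bar s$; and since $\sup_{[0,C_0]}\abs{\beta}$ has Gaussian tails (reflection principle), $M_n(z,\bar s)$ has all exponential moments. For $(m,t)\in P_n(z,\bar s)$ and admissible $u$, writing $t=\sigma(S_n)+w^2 r_t$ and $u=\sigma(S_n)+w^2 r_u$ with $r_t,r_u\in[0,C_0]$, we get $\xi(u)-\xi(t)=w(\beta_{r_u}-\beta_{r_t})$ and $\abs{u-t}^{1/2}\ge c_0 w$, so $\abs{\xi(u)-\xi(t)}/\abs{u-t}^{1/2}\le M_n(z,\bar s)$, as required.

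\emph{Degenerate cases and the main difficulty.} When $P_n(z,\bar s)=\emptyset$ (for instance $S_n=\infty$) the asserted inequality is vacuous, and one simply defines $M_n(z,\bar s)$ from an auxiliary independent Brownian motion so as to keep its law unchanged; whenever $P_n(z,\bar s)\neq\emptyset$ one has $\sigma(S_n)\le T$, so the strong Markov property is applied at a finite time. The one genuinely delicate point is reconciling the \emph{random} scale $w$ (which is $\cF_{\sigma(S_n)}$-measurable, not deterministic) with the requirement that $M_n$ be independent of $\cF_{\sigma(S_n)}$: this succeeds precisely because, after rescaling time by $w^2$ and space by $w^{-1}$, the conditional law of the post-$\sigma(S_n)$ increments of $\xi$ ceases to depend on $w$. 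The localisation step is merely a rerun of the Grönwall estimates already used for \cref{le:max_num_times}.
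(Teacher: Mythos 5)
Your proof is correct and follows essentially the same route as the paper's: the localisation of $t$, $u$, and $2^{-m}$ to scales $\asymp Y_{\sigma(S_n)}(z)$ via the Gr\"onwall estimates of \cref{le:max_num_times}, the definition of $M_n$ from the oscillation of $\xi$ on a window of length $\asymp Y_{\sigma(S_n)}^2$ after the stopping time $\sigma(S_n)$, and the strong Markov property plus Brownian scaling for independence, equality in law, and exponential moments. The only cosmetic difference is that you rescale to a unit-scale Brownian motion $\beta$ before taking the supremum (and handle the degenerate case $P_n=\emptyset$ explicitly), whereas the paper takes the supremum of the normalised increments $\abs{\xi(u)-\xi(t)}/\abs{u-t}^{1/2}$ directly so that the required term literally appears in the supremum.
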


\begin{proof}
Let
\[ M_n(z,\bar s) \defeq \sup_{t,u} \frac{\abs{\xi(u)-\xi(t)}}{\abs{u-t}^{1/2}} \]
where the supremum runs over $t<u$ with $t \in [\sigma(S_n),\sigma(S_n)+3\tilde b^2Y_{\sigma(S_n)}^2]$ and $\abs{u-t} \in [\frac{1}{16}e^{-12\tilde b}Y_{\sigma(S_n)}^2, 16Y_{\sigma(S_n)}^2]$.

By the strong Markov property and Brownian scaling, we see that each $M_n$ has the same law, and finite exponential moments.

For every $(m,t) \in P_n(z,\bar s)$, by \cref{le:max_num_times}, we have $t \in [\sigma(S_n), \sigma(T_n)] \subseteq [\sigma(S_n), \sigma(S_n)+3\tilde b^2Y_{\sigma(S_n)}^2]$ and $2^{-m} \in [\frac{1}{2}Y_{t}, 2Y_{t}] \subseteq [\frac{1}{2}e^{-6\tilde b}Y_{\sigma(S_n)}, 2Y_{\sigma(S_n)}]$. If $u > t$ and $\abs{u-t} \in [2^{-2(m+1)},2^{-2(m-1)}]$, then also $\abs{u-t} \in [\frac{1}{16}e^{-12\tilde b}Y_{\sigma(S_n)}^2, 16Y_{\sigma(S_n)}^2]$. In particular, the term
\[ \frac{\abs{\xi(u)-\xi(t)}}{\abs{u-t}^{1/2}} \]
appears in the supremum defining $M_n$.
\end{proof}

\subsection{Generalised variation}
\label{se:gvar_pf}

In this section, we are going to estimate the $\psi$-variation of the SLE trace.

We will frequently use the following estimate. Let $\psi$ be a convex function and $p_n \ge 0$ a summable sequence with $p = \sum p_n < \infty$. By Jensen's inequality we have
\[ \psi\left( \sum a_n \right) = \psi\left( \sum p\frac{a_n}{p_n} \frac{p_n}{p} \right) \le \sum \psi\left(p\frac{a_n}{p_n}\right) \frac{p_n}{p} . \]

In the following, we will assume that $\psi$ is convex and satisfies the condition ($\Delta_c$) (see \cref{se:prelim_psi_var}).

Now let $0 = t_0 < t_1 < ... < t_r = T$ be a partition of $[0,T]$. Recall the notation from \cref{se:pf_setup}. For $z \in \bbH$ and $\bar s \in \bbN$, the following sets of pairs
\begin{align*}
&\{ (m,t_j) \mid (m,t_j) \in P(z,\bar s) \text{ and } \abs{t_j-t_{j-1}} \ge 2^{-2m} \} , \\
&\{ (m,t_j) \mid (m,t_j) \in P(z,\bar s) \text{ and } \abs{t_{j+1}-t_j} \ge 2^{-2m} \}
\end{align*}
each form a set $P'(z,\bar s)$ as described in \cref{se:pf_setup}.

\begin{lemma}\label{lem:gvar_bound}
Let $M,T > 0$ and $\varepsilon > 0$. There exists $C > 0$ depending on $\psi,T,\varepsilon$ such that if $\norm{\xi}_{\infty;[0,T]} \le M$, then
\begin{multline*}
\sum_j \psi\left( \abs{\gamma(t_j)-\hat f_{t_j}(i\abs{t_j-t_{j-1}}^{1/2})} \right) + \psi\left( \abs{\gamma(t_j)-\hat f_{t_j}(i\abs{t_{j+1}-t_j}^{1/2})} \right) \\
\le C \sum_{s \in \bbN} \sum_{z \in H(e^{-4s},M,T)} \sum_{n \in \bbN_0} \hfill\\
(\logp Y_{\sigma(S_n,z)}(z)^{-1})^{-1-\varepsilon} \psi\left( e^{-4s} (\logp Y_{\sigma(S_n,z)}(z)^{-1})^{1+\varepsilon} \right) 1_{S_n(z,s) < \infty}
\end{multline*}
for any partition of $[0,T]$.
\end{lemma}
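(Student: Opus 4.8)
The plan is to reduce the left-hand side to a sum over the grid points $z$, organized according to the conformal-radius stopping-time structure set up in \cref{se:pf_setup}. First I would fix a partition $0=t_0<\dots<t_r=T$ and recall from \eqref{eq:f_int} and \cref{rm:crad_vs_path} that for each $i$ the increment $\abs{\gamma(t_i)-\hat f_{t_i}(iv_i)}$ (with $v_i = \abs{t_i-t_{i-1}}^{1/2}$ or $\abs{t_{i+1}-t_i}^{1/2}$) is controlled by a dyadic sum $\sum_{m'\ge m_i}\Upsilon_{t_i}(\hat f_{t_i}(iv_i 2^{-m'}))$, where $2^{-m_i}\asymp v_i$. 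Since $\psi$ is convex and satisfies $(\Delta_c)$, the Jensen-type inequality displayed just before the lemma lets me pass $\psi$ inside this dyadic sum at the cost of a summable weight, turning $\psi(\abs{\gamma(t_i)-\hat f_{t_i}(iv_i)})$ into a controlled sum of terms $\psi(\Upsilon_{t_i}(\hat f_{t_i}(iv_i 2^{-m'})))$ — or more precisely, after reindexing, into terms indexed by a scale $m$ with $2^{-m}$ comparable to the height reached. The key point is then that each such term corresponds, via \cref{le:distortion} and \cref{thm:fw_grid}, to a grid point $z\in H(e^{-4s},M,T)$ for the appropriate $s$ with $\Upsilon_s(z)\asymp e^{-4s}$, $Y_{\sigma(s,z)}(z)\asymp 2^{-m}$, and $\abs{Z_{\sigma(s,z)}(z)-i2^{-m}}\le 2^{-m-1}$, i.e.\ $(m,t_i)\in P(z,s)$.

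Next I would split $P(z,s)=\bigcup_n P_n(z,s)$ and use the fact (from the definition of $P'_n$ and \cref{le:max_num_times}) that for each fixed $z$, $s$, $n$ at most $N$ distinct pairs $(m,t_i)$ with pairwise $t$-separation $\ge 2^{-2m}$ can lie in $P_n(z,s)$ — and the two index sets displayed before the lemma are exactly of this form. Thus each $n$-block contributes at most $N$ times a single representative term. On such a block, \cref{le:max_num_times} also gives $Y_{\sigma(s)}\asymp Y_{\sigma(S_n)}$ (up to the fixed factors $e^{\pm 6\tilde b}$) and $e^{-4s}$, $2^{-m}$ are all comparable to $Y_{\sigma(S_n,z)}(z)$ up to universal constants, and $S_n<\infty$ is forced. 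Substituting these comparabilities, and using once more $(\Delta_c)$ to absorb the universal constants inside $\psi$, each block contributes at most a constant times $\psi(e^{-4s}\cdot(\text{const}))$; to match the stated right-hand side with its factor $(\logp Y_{\sigma(S_n)}^{-1})^{1+\varepsilon}$ I would further bound, again by $(\Delta_c)$ and using that $\psi(cx)\le\Delta_c\psi(x)$ with $\Delta_c\to 0$, the quantity $\psi(e^{-4s})$ by $(\logp Y_{\sigma(S_n)}^{-1})^{-1-\varepsilon}\psi(e^{-4s}(\logp Y_{\sigma(S_n)}^{-1})^{1+\varepsilon})$ — this monotone-weight insertion is what produces the extra logarithmic factors needed later to make the $z$-sum (and eventually the probabilistic expectation) converge.

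Finally I would sum over all $(m,t_i)$, over all $n$, over all $z$ in the grid, and over all relevant $s\in\bbN$, carefully checking that the double counting is harmless: each pair $(m,t_i)$ is charged to only boundedly many $(z,s,n)$ because the grid $H(e^{-4s},M,T)$ near the relevant location has bounded multiplicity (by construction of the grid spacing $h/8$ in \eqref{eq:fw_grid}) and because $s$ ranges over an interval of length $3$ around $\bar s(t_i,v_i)$. Collecting the universal constants and the fixed $N$ into a single $C=C(\psi,T,\varepsilon)$ (polynomial in $M$ absorbed since $M$ is fixed) gives the claimed bound. The main obstacle I anticipate is the bookkeeping in this last step: making precise how a given partition-increment term is routed to a unique-up-to-bounded-multiplicity grid triple $(z,s,n)$, and verifying that the $(\Delta_c)$-manipulations genuinely yield the stated form $\psi(e^{-4s}(\logp Y_{\sigma(S_n)}^{-1})^{1+\varepsilon})$ with the compensating negative-power weight, rather than just some comparable expression — the convexity and the $(\Delta_c)$ property have to be invoked in the right order and direction each time $\psi$ is moved past a sum or a constant.
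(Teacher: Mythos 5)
Your overall architecture is the same as the paper's (dyadic decomposition via \eqref{eq:f_int}, weighted Jensen for $\psi$, routing each scale to a grid point via \cref{le:distortion} and \cref{thm:fw_grid}, then collapsing blocks with \cref{le:max_num_times}), but there is a genuine gap in how you handle the Jensen step, and it is exactly at the point where the logarithmic factors originate. When you pass $\psi$ through the tail sum $\sum_{m\ge m_i}\Upsilon_{t_i}(\hat f_{t_i}(i2^{-m}))$ using weights $p_m\asymp m^{-1-\varepsilon}$, Jensen gives terms of the form $m^{-1-\varepsilon}\,\psi\bigl(c\,m^{1+\varepsilon}\,\Upsilon_{t_i}(\hat f_{t_i}(i2^{-m}))\bigr)$: the inverse weight $m^{1+\varepsilon}$ is forced \emph{inside} the argument of $\psi$. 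You cannot drop it, because for convex $\psi$ with $\psi(0)=0$ one has $\psi(\sum_m a_m)\ge\sum_m\psi(a_m)$, so there is no unweighted bound of $\psi$ of the tail by the sum of $\psi$'s; and since $\psi$ is superlinear (e.g.\ $\psi_{p,q}$ with $p>1$), the weighted term $m^{-1-\varepsilon}\psi(m^{1+\varepsilon}x)$ is much larger than $\psi(x)$, growing like $m^{(1+\varepsilon)(p-1)}\psi(x)$. Your intermediate claim that ``each block contributes at most a constant times $\psi(e^{-4s}\cdot\mathrm{const})$'' therefore does not follow from your own steps --- it is a strictly stronger bound than anything the weighted Jensen inequality yields, and it is not established by any other argument in your plan.

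The subsequent ``monotone-weight insertion'' $\psi(e^{-4s})\le(\logp Y_{\sigma(S_n)}^{-1})^{-1-\varepsilon}\psi\bigl(e^{-4s}(\logp Y_{\sigma(S_n)}^{-1})^{1+\varepsilon}\bigr)$ is indeed a valid inequality (by convexity and $\psi(0)=0$; it is not really an instance of $(\Delta_c)$), but it is being used to repair a bound you never had: the factors $(\logp Y_{\sigma(S_n)}^{-1})^{\pm(1+\varepsilon)}$ in the statement are not cosmetic decorations ``needed later'' for convergence --- they are the image, under the identification $m\asymp\logp Y_{\sigma(S_n,z)}(z)^{-1}$ valid on each block (via $Y_{t_i}(z)\asymp 2^{-m}\asymp Y_{\sigma(S_n)}(z)$ from \cref{le:max_num_times}), of the Jensen weights themselves. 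If you keep those weights attached to each pair $(m,t_i)$ all the way through the routing and block-counting, your argument lands directly on the stated right-hand side and is then the paper's proof; as written, the middle of your argument asserts an unproven stronger inequality and the final matching with the statement is only coincidental.
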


\begin{remark}
This is almost an upper bound on the $\psi$-variation of $\gamma$. Note that the right-hand side does not depend on the choice of the partition.
\end{remark}

\begin{proof}
Pick $m_j \in \bbN$ with $2^{-m_j} \asymp_2 \abs{t_j-t_{j-1}}^{1/2}$. By \eqref{eq:f_int}, we have
\[ \abs{\gamma(t)-\hat f_t(i2^{-m_j})} \lesssim \sum_{m \ge m_j} \Upsilon_t(\hat f_t(i2^{-m})) . \]
Applying Jensen's inequality as above (and the assumption ($\Delta_c$) for $\psi$) yields
\[ \psi\left( \abs{\gamma(t)-\hat f_t(i2^{-m_j})} \right) \lesssim \sum_{m \ge m_j} m^{-1-\varepsilon} \psi\left( \Upsilon_t(\hat f_t(i2^{-m})) m^{1+\varepsilon} \right) . \]

Applying this to $2^{-m_j} \asymp \abs{t_j-t_{j-1}}^{1/2}$ and summing over $t_j$, we get
\begin{equation}\label{eq:gvar_crad}
\sum_j \psi\left( \abs{\gamma(t_j)-\hat f_{t_j}(i\abs{t_j-t_{j-1}}^{1/2})} \right) 
\lesssim \sum_{m \in \bbN} \sum_{j \in I_m}  m^{-1-\varepsilon} \psi\left( \Upsilon_{t_j}(\hat f_{t_j}(i2^{-m})) m^{1+\varepsilon} \right)
\end{equation}
where $I_m = \{i \mid \abs{t_j-t_{j-1}}  \ge 2^{-2m}\}$.

The same applies with $\abs{t_j-t_{j-1}}$ replaced by $\abs{t_{j+1}-t_j}$, so we can just focus on the former.

We rearrange the sum \eqref{eq:gvar_crad} by collecting (for each $s \in \bbN$) those terms where \linebreak$\Upsilon_{t_j}(\hat f_{t_j}(i2^{-m})) \in [e^{-4s},e^{-4(s-1)}]$. As we observed in \cref{se:pf_setup}, we can find $z = z(t_j,m) \in H(e^{-4s},\norm{\xi},T)$ such that $\Upsilon_{t_j}(z) \asymp \Upsilon_{t_j}(\hat f_{t_j}(i2^{-m}))$ and $\abs{Z_{t_j}(z) - i2^{-m}} \le 2^{-m-1}$.

In particular, we have $Y_{t_j}(z) \asymp 2^{-m}$ and $\Upsilon_{t_j}(z) \asymp \Upsilon_{t_j}(\hat f_{t_j}(i2^{-m})) \asymp e^{-4s}$, and $(m,t_j) \in P_n(z,s)$ for some $n$. Moreover, by \cref{le:max_num_times}, for each choice of $s$, $z$, and $n$, we can have at most $N$ pairs of $(m,j)$ with $(m,t_j) \in P_n(z,s)$. Finally, we have shown there also that $Y_{t_j} \asymp Y_{\sigma(S_n)}$. Putting everything together, we get
\[ \begin{split}
&\sum_j \psi\left( \abs{\gamma(t_j)-\hat f_{t_j}(i\abs{t_j-t_{j-1}}^{1/2})} \right) \\
&\quad \lesssim \sum_{s \in \bbN} \sum_{m \in \bbN} \sum_{j \in I_m} 1_{\Upsilon_{t_j}(\hat f_{t_j}(i2^{-m})) \in [e^{-4s},e^{-4(s-1)}]} \\
&\quad \hphantom{\lesssim \sum_{s \in \bbN} \sum_{m \in \bbN} \sum_{j \in I_m}} 
(\logp Y_{t_j}(z(t_j,m))^{-1})^{-1-\varepsilon} \psi\left( e^{-4s} (\logp Y_{t_j}(z(t_j,m))^{-1})^{1+\varepsilon} \right) \\
&\quad \lesssim \sum_{s \in \bbN} \sum_{z \in H(e^{-4s},M,T)} \sum_{n \in \bbN_0} \\
&\quad \hphantom{\lesssim \sum_{s \in \bbN}}
(\logp Y_{\sigma(S_n,z)}(z)^{-1})^{-1-\varepsilon} \psi\left( e^{-4s} (\logp Y_{\sigma(S_n,z)}(z)^{-1})^{1+\varepsilon} \right) 1_{S_n(z,s) < \infty} .
\end{split} \]
\end{proof}

In the following, we consider the function $\psi_{p,q}$ defined in \eqref{eq:psi_pq}. By \eqref{eq:psi_pq_est}, we can estimate
\[ \psi\left( e^{-4s} (\logp Y_{\sigma(s)}(z)^{-1})^{1+\varepsilon} \right) \lesssim \big(...\big)^p s^{-q} \left(\logp\logp Y_{\sigma(s)}(z)^{-1}\right)^{q} . \]
Recall that by definition $S_n(z,s) \in [s-2,s+1]$ whenever it is finite.
Hence, using also $\logp\logp x \lesssim (\logp x)^{\varepsilon}$, we are reduced to estimate
\begin{equation}\label{eq:gvar_bound_pq} 
\sum_{s \in \bbN} \sum_{z \in H(e^{-4s},M,T)} \sum_{n \in \bbN_0} e^{-4ps} s^{-q} \left(\logp Y_{\sigma(S_n)}^{-1}\right)^{p-1+\varepsilon} 1_{S_n < \infty} .
\end{equation}
Recall that $\bbP(S_n < \infty) \le p^n$ for some $p < 1$. Therefore, when taking expectations, we can (using Hölder's inequality) safely ignore the sum in $n$ at the cost of a multiplicative factor.

\begin{remark}
In the expression \eqref{eq:gvar_bound_pq} we see again the phase transition of the $p$-variation exponent $d = (1+\frac{\kappa}{8}) \wedge 2$. Recall that $\hat\theta$ is a radial Bessel process of index $\nu = \frac{1}{2}-\frac{4}{\kappa}$ which can hit the boundary in case $\nu < 0 \iff \kappa < 8$. Consequently, the process has finite lifetime, and the probability of survival decays like $e^{\nu t}$. This allows the summand to be much smaller than $e^{-4ps}$ and therefore allows for a choice of $p<2$. In case $\kappa \ge 8$, the summand will not be smaller than $e^{-4ps}$, and since for each $s$ we have $e^{8s}$ summands, we need $p \ge 2$ to make the sum converge.
\end{remark}

Recall from \cref{se:prelim_general,se:prelim_bm} (and explained again in \cref{se:pf_warmup}) that we can write
\[ Y_{\sigma(s)}(z) = y \exp\left( -\frac{2}{\kappa} C_{\kappa(s-s_0)} \right) \]
where $C$ is the radial Bessel clock defined in \cref{se:bessel}, for a radial Bessel process of index $\nu = \frac{1}{2}-\frac{4}{\kappa}$ started at $\hat\theta_{s_0} = \cot^{-1}(x/y)$ (where $s_0 = -\frac{1}{4}\log y$). In other words, we have
\[ \log Y_{\sigma(s)}(z)^{-1} = \log\frac{1}{y} + \frac{2}{\kappa} C_{\kappa(s-s_0)} . \]
We can now apply the results from \cref{se:bessel}.

First consider the case $\kappa > 8 \iff \nu > 0$. By \cref{prop:bessel_clock_moments_nug0}, we have
\[ \begin{split}
\bbE (\logp Y_{\sigma(s)}(z)^{-1})^\eta &\lesssim (\logp (1/y))^\eta + (s-s_0)^\eta + (-\log\sin\cot^{-1}(x/y))^\eta \\
&\lesssim s^\eta+\left(\logp\frac{1}{y}+\log(1+\frac{\abs{x}}{y})\right)^\eta .
\end{split} \]

Hence,
\[ \begin{split}
&\bbE[\text{ eq. \eqref{eq:gvar_bound_pq} }]\\
&\quad \lesssim \sum_{s \in \bbN} \sum_{z \in H(e^{-4s},M,T)} e^{-4ps} s^{-q} \left( s^{p-1+\varepsilon}+\left(\logp\frac{1}{y}+\log(1+\frac{\abs{x}}{y})\right)^{p-1+\varepsilon} \right) \\
&\quad \asymp \sum_{s \in \bbN} s^{p-1-q+\varepsilon} e^{(8-4p)s}
\end{split} \]
which converges for $p=2$, $q>2$.

In case $\kappa < 8 \iff \nu = \frac{1}{2}-\frac{4}{\kappa} < 0$, we apply \cref{prop:bessel_clock_moments_nul0}. Using also that (by the definition of $S_n$) $S_n \in [s-2,s+1]$ and $\sin\hat\theta_{S_n} \ge \sin\cot^{-1}(1)$ whenever $S_n < \infty$, we get
\[ \begin{split}
&\bbE[(\logp Y_{\sigma(S_n)}(z)^{-1})^\eta \,1_{T_0 > S_n}] \\
&\quad \lesssim (\sin\cot^{-1}(x/y))^{-2\nu} \,\bbE^{-\nu}[(\logp\frac{1}{y}+C_{\kappa(s+1-s_0)})^\eta e^{\nu {\kappa(s-s_0)}}] \\
&\quad \lesssim (\sin\cot^{-1}(x/y))^{-2\nu-\varepsilon}(\logp\frac{1}{y})^\eta (s-s_0)^\eta e^{\nu\kappa (s-s_0)} \\
&\quad \lesssim s^\eta \exp((\frac{\kappa}{2}-4)s) (1+\abs{x}/y)^{1-8/\kappa+\varepsilon} y^{\kappa/8-1-\varepsilon} .
\end{split} \]
Hence, applying also \cref{le:fw_sum_grid}, we get
\[ \begin{split}
\bbE[\text{ eq. \eqref{eq:gvar_bound_pq} }] 
& \lesssim \sum_{s \in \bbN} \sum_{z \in H(e^{-4s},M,T)} \\
& \hphantom{\lesssim \sum_{s \in \bbN} \sum}
e^{-4ps} s^{-q} s^{p-1+\varepsilon} \exp((\frac{\kappa}{2}-4)s) (1+\abs{x}/y)^{1-8/\kappa+\varepsilon} y^{\kappa/8-1-\varepsilon} \\
& \asymp \sum_{s \in \bbN} s^{p-1-q+\varepsilon} e^{(4+\kappa/2-4p)s}
\end{split} \]
which converges for $p=1+\frac{\kappa}{8}$, $q>p=1+\frac{\kappa}{8}$.

\begin{theorem}\label{thm:gvar}
Let $\kappa \in {]0,8[} \cup {]8,\infty[}$. Let $d=(1+\frac{\kappa}{8}) \wedge 2$, $q>d$, and $\psi_{d,q}$ as in \eqref{eq:psi_pq}. Then, restricted to the event $\{\norm{\xi}_{[0,T]} \le M\}$, we have
\[ \bbE\left[ V^1_{\psi_{d,q};[0,T]}(\gamma) \ 1_{\{\norm{\xi}_{[0,T]} \le M\}} \right] < \infty . \]
In particular, $\bbE\left[ [\gamma]_{\psi_{d,q}\text{-var};[0,T]}^{p} \right] < \infty$ for any $p < d$.
\end{theorem}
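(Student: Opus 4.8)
The plan is to assemble \cref{lem:gvar_bound}, the estimate \eqref{eq:f_diff}, \cref{lem:brownian_increment}, and the radial Bessel clock estimates of \cref{se:bessel}. Most of the analytic work is the computation carried out just above the statement, which bounds $\bbE[\text{eq.\ \eqref{eq:gvar_bound_pq}}]$ with $p=d$; what remains is to pass from the two ``endpoint'' quantities of \cref{lem:gvar_bound} to the genuine variation sum $\sum_i\psi(\abs{\gamma(t_i)-\gamma(t_{i-1})})$, and then to deduce the moment bound.

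First I would fix a partition $0=t_0<\dots<t_r=T$, choose $m_i\in\bbN$ with $2^{-m_i}\asymp_2\abs{t_i-t_{i-1}}^{1/2}$, set $v_i=2^{-m_i}$, and split by the triangle inequality and convexity of $\psi$ (with ($\Delta_c$)):
\[
\psi(\abs{\gamma(t_i)-\gamma(t_{i-1})})\lesssim\psi(\abs{\gamma(t_i)-\hat f_{t_i}(iv_i)})+\psi(\abs{\hat f_{t_i}(iv_i)-\hat f_{t_{i-1}}(iv_i)})+\psi(\abs{\hat f_{t_{i-1}}(iv_i)-\gamma(t_{i-1})}).
\]
Summed over $i$, the first and third families are precisely those controlled by \cref{lem:gvar_bound} (the third after the shift $i\mapsto i-1$). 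For the middle family, $\abs{t_i-t_{i-1}}\asymp v_i^2$, so \eqref{eq:f_diff} gives $\abs{\hat f_{t_i}(iv_i)-\hat f_{t_{i-1}}(iv_i)}\lesssim\Upsilon_{t_{i-1}}(\hat f_{t_{i-1}}(iv_i))\bigl(1+(\abs{\xi(t_i)-\xi(t_{i-1})}^2/\abs{t_i-t_{i-1}})^l\bigr)$. Just as in the proof of \cref{lem:gvar_bound}, I would collect the indices with $\Upsilon_{t_{i-1}}(\hat f_{t_{i-1}}(iv_i))\in[e^{-4s},e^{-4(s-1)}]$, match each to a grid point $z=z(t_{i-1},m_i)\in H(e^{-4s},\norm{\xi},T)$ with $(m_i,t_{i-1})\in P_n(z,s)$ for some $n$ (so by \cref{le:max_num_times} at most $N$ pairs per triple $(s,z,n)$, and $Y_{t_{i-1}}(z)\asymp 2^{-m_i}\asymp Y_{\sigma(S_n,z)}(z)$), and apply \cref{lem:brownian_increment} with $t=t_{i-1}$, $u=t_i$ to bound the Brownian factor by $1+M_n(z,s)^{2l}$. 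Using $\Upsilon_{t_{i-1}}(\hat f_{t_{i-1}}(iv_i))\le e^4 e^{-4s}$ and \eqref{eq:psi_pq_est} to peel off the $M_n$-factor, the middle family contributes a sum bounded by a constant times
\[
\sum_{s\in\bbN}\sum_{z\in H(e^{-4s},M,T)}\sum_{n\in\bbN_0}e^{-4ds}s^{-q}\bigl(1+M_n(z,s)^{2l}\bigr)^{d}\bigl(\logp M_n(z,s)\bigr)^{q}\,1_{S_n(z,s)<\infty} .
\]
None of these bounds depends on the partition, so they bound $V^1_{\psi_{d,q};[0,T]}(\gamma)$ itself.

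Now I would take expectations on $\{\norm{\xi}_{\infty;[0,T]}\le M\}$, on which $H(e^{-4s},\norm{\xi},T)\subseteq H(e^{-4s},M,T)$. Since $M_n(z,s)$ is independent of $\mathcal F_{\sigma(S_n,z)}$ and has all exponential moments (\cref{lem:brownian_increment}) while $Y_{\sigma(S_n)}^{-1}$ and $1_{S_n<\infty}$ are $\mathcal F_{\sigma(S_n)}$-measurable, the $M_n$-factor only costs a bounded multiplicative constant, and likewise $\bbP(S_n<\infty)\le p_0^n$ with $p_0<1$ lets one absorb the sum over $n$. The \cref{lem:gvar_bound}-part is then exactly $\bbE[\text{eq.\ \eqref{eq:gvar_bound_pq}}]$ with $p=d$, finite for $q>d$ by the computation above (using \cref{prop:bessel_clock_moments_nug0} when $\kappa>8$, \cref{prop:bessel_clock_moments_nul0} when $\kappa<8$, and \cref{le:fw_sum_grid}). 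The middle-family part is handled by the same estimates and is in fact smaller: there is no sum over $m$ and hence no growing $(\logp Y^{-1})$-factor, only the bounded $M_n$-factor, so after using $\bbP(S_n<\infty)\le p_0^n$ (and, for $\kappa<8$, the survival bound $\bbP(S_0<\infty)\lesssim(1+x^2/y^2)^{\nu}y^{\kappa/8-1}e^{(\kappa/2-4)s}$ coming from $\bbP^\nu(T_0>t)\lesssim(\sin\theta_0)^{-2\nu}e^{\nu t}$) and summing over the grid with \cref{le:fw_sum_grid}, one is left with $\lesssim\sum_s s^{-q}<\infty$. This proves $\bbE[V^1_{\psi_{d,q};[0,T]}(\gamma)\,1_{\{\norm{\xi}\le M\}}]<\infty$. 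I expect the main obstacle to be exactly this bookkeeping — keeping the anchoring time ($t_{i-1}$ vs.\ $t_i$), the right $m_i$, the independence of $M_n$, and the distribution of $1_{S_n<\infty}$ between the $n$-sum and the Bessel survival decay under control; the genuinely new estimates are already in hand.

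Finally, for the ``in particular'' I would note that $\psi=\psi_{d,q}$ is convex with $\psi(0)=0$ and comparable to $x^d$ for $x\ge x_0$, which yields $\psi(x/\lambda)\le c\lambda^{-d}\psi(x)$ for all $\lambda\ge1$, $x\ge0$ (checked from the explicit form on $x\le x_0$ and by power comparison on $x>x_0$). Hence $V^{\lambda}_{\psi;[0,T]}(\gamma)\le c\lambda^{-d}V^1_{\psi;[0,T]}(\gamma)$, so $[\gamma]_{\psi\text{-var};[0,T]}\lesssim(1+V^1_{\psi;[0,T]}(\gamma))^{1/d}$. Given $p<d$, pick $p'\in{]p,d[}$; with $C_k$ the (polynomial in $k$, by \cref{le:fw_sum_grid}) bound just obtained on $\bbE[V^1_{\psi;[0,T]}(\gamma)1_{\norm{\xi}\le k}]$, Jensen ($p'/d<1$) gives $\bbE[[\gamma]_{\psi\text{-var}}^{p'}1_{\norm{\xi}\le k}]\lesssim(1+C_k)^{p'/d}$, and Hölder together with the Gaussian tail $\bbP(\norm{\xi}_{\infty;[0,T]}>k)\lesssim e^{-ck^2}$ (recall $\xi=\sqrt\kappa B$) gives
\[
\bbE\bigl[[\gamma]_{\psi\text{-var};[0,T]}^{p}\bigr]\le\sum_{k\ge0}\bbE\bigl[[\gamma]_{\psi\text{-var}}^{p}\,1_{k\le\norm{\xi}<k+1}\bigr]\lesssim\sum_{k\ge0}(1+C_{k+1})^{p/d}\,e^{-ck^2(1-p/p')}<\infty .
\]
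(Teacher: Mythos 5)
Your proposal is correct and follows essentially the same route as the paper: the same three-term split via $\hat f_{t}(iv_i)$, with the endpoint terms handled by \cref{lem:gvar_bound} (i.e.\ the bound \eqref{eq:gvar_bound_pq} with $p=d$), the middle term handled by \eqref{eq:f_diff} together with the grid/$P_n$ bookkeeping and \cref{lem:brownian_increment}, the $M_n$-factor removed by independence from $\mathcal F_{\sigma(S_n)}$ and exponential moments, and the moment bound for $[\gamma]_{\psi\text{-var}}$ deduced from the polynomial dependence on $M$ plus Gaussian tails of $\norm{\xi}_\infty$. The only differences are cosmetic (indexing, bounding the middle-term expectation directly by $\sum_s s^{-q}$ rather than dominating it by \eqref{eq:gvar_bound_pq}, and spelling out the final H\"older/Jensen step), so no further changes are needed.
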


\begin{proof}
First note that restricting to the event $\{\norm{\xi}_{[0,T]} \le M\}$ is enough since all our previous estimates depend on $M$ polynomially (the dependence comes from \cref{le:fw_sum_grid}), whereas the probability of $\bbP( \norm{\xi}_{[0,T]} > M )$ decays exponentially in $M$. The second claim then follows from the fact that $[\gamma]_{\psi_{d,q}\text{-var};[0,T]} \lesssim V^1_{\psi_{d,q};[0,T]}(\gamma)^{1/d}$.

So we are almost reduced to what we have estimated above.

Let $0 = t_0 < t_1 < ... < t_r = T$ be any partition of $[0,T]$. Write $v_j = \abs{t_{j+1}-t_j}^{1/2}$. By our assumption $\Delta_c$ on $\psi$,
\begin{multline*}
\psi(\abs{\gamma(t_{j+1})-\gamma(t_j)}) \\
\lesssim \psi\left( \abs{\gamma(t_{j+1})-\hat f_{t_{j+1}}(iv_j)} \right) 
+ \psi\left( \abs{\hat f_{t_{j+1}}(iv_j)-\hat f_{t_j}(iv_j)} \right) 
+ \psi\left( \abs{\gamma(t_j)-\hat f_{t_j}(iv_j))} \right) .
\end{multline*}
The sums over the first and the third term appear already in \cref{lem:gvar_bound} which we have bounded by \eqref{eq:gvar_bound_pq}. Recall that the expression \eqref{eq:gvar_bound_pq} does not depend on the choice of the partition, and that $\bbE[\text{ eq. \eqref{eq:gvar_bound_pq} }] < \infty$.

So we are left to estimate the middle term. We show that it is bounded by \eqref{eq:gvar_bound_pq} as well.

By \eqref{eq:f_diff}, we have
\[
\abs{\hat f_{t_{j+1}}(iv_j)-\hat f_{t_j}(iv_j)} 
\le \Upsilon_{t_j}(\hat f_{t_j}(iv_j)) \left( 1 + \left(\frac{\abs{\xi(t_{j+1})-\xi(t_j)}^2}{\abs{t_{j+1}-t_j}}\right)^l \right) .
\]

Pick $m_j \in \bbN$ with $2^{-m_j} \asymp_2 v_j$.

As in the proof of \cref{lem:gvar_bound}, we can collect the indices where $\Upsilon_{t_j}(\hat f_{t_j}(i2^{-m_j})) \in [e^{-4s},e^{-4(s-1)}]$, and as before, we can replace $\hat f_{t_j}(i2^{-m_j})$ by $z(t_j,m_j) \in H(e^{-4s},\norm{\xi},T)$, and we have $Y_{t_j}(z(t_j,m_j)) \asymp 2^{-m_j}$ and $\Upsilon_{t_j}(z(t_j,m_j)) \asymp \Upsilon_{t_j}(\hat f_{t_j}(i2^{-m_j})) \asymp e^{-4s}$, and $(m_j,t_j) \in P(z,s)$.

Finally, since $\abs{t_{j+1}-t_j} = v_j^2 \asymp_4 2^{-2m_j}$, by \cref{lem:brownian_increment}
\[ \left(\frac{\abs{\xi(t_{j+1})-\xi(t_j)}^2}{\abs{t_{j+1}-t_j}}\right)^l \le M_n(z,s)^{2l} . \]
Hence,
\[ \begin{split}
&\sum_j \psi(\abs{\hat f_{t_{j+1}}(iv_j)-\hat f_{t_j}(iv_j)}) \\
&\quad \le \sum_j \psi\left( \Upsilon_{t_j}(\hat f_{t_j}(iv_j)) \left( 1 + \left(\frac{\abs{\xi(t_{j+1})-\xi(t_j)}^2}{\abs{t_{j+1}-t_j}}\right)^l \right) \right) \\
&\quad \lesssim \sum_{s \in \bbN} \sum_{z \in H(e^{-4s},M,T)} \sum_{n \in \bbN_0} \psi(e^{-4s} (1+M_n(z,s)^{2l})) \,1_{S_n(z,s) < \infty} \\
&\quad \lesssim \sum_{s \in \bbN} \sum_{z \in H(e^{-4s},M,T)} \sum_{n \in \bbN_0} e^{-4ps} s^{-q} (1+M_n(z,s)^{2lp+\varepsilon}) \,1_{S_n(z,s) < \infty} \\
\end{split} \]
where we have applied \eqref{eq:psi_pq_est} in the last step. We see that this sum is also bounded by the expression \eqref{eq:gvar_bound_pq} except for a factor $(1+M_n(z,s)^{2lp})$. But when taking the expectation, that factor is irrelevant since it is independent of $\mathcal F_{\sigma(S_n)}$ and has finite exponential moments.
\end{proof}

\subsection{Hölder-type modulus}
\label{se:hoelder_pf}

We can estimate the Hölder-type modulus of \slek{} in a similar way as in the previous section.

Let $\psi,\varphi \colon {[0,\infty[} \to {[0,\infty[}$ be homeomorphisms that satisfy the condition ($\Delta_c$) resp.\@ ($\tilde\Delta_c$) with two different functions $\Delta_c$, $\tilde\Delta_c$ (see \cref{se:prelim_psi_var}; convexity is not required here).

\begin{lemma}\label{lem:hoelder_bound}
Let $M,T > 0$. There exists $C > 0$ depending on $\psi,\varphi,T$ such that if $\norm{\xi}_{\infty;[0,T]} \le M$, then
\begin{multline*}
\psi\left( \frac{ \abs{\gamma(t_1)-\hat f_{t_1}(i\abs{t_1-t_2}^{1/2})} }{\varphi(\abs{t_1-t_2})} \right) \\
\le C \sum_{s \in \bbN} \sum_{z \in H(e^{-4s},M,T)} \sum_{n \in \bbN_0} \psi\left( \frac{ e^{-4s} }{\varphi(Y_{\sigma(S_n,z)}(z)^{2})} \right) 1_{S_n(z,s) < \infty}
\end{multline*}
for any $t_1,t_2 \in [0,T]$.
\end{lemma}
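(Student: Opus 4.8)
The plan is to imitate the proof of \cref{lem:gvar_bound}, with the $\psi$-variation normalisation replaced by the Hölder-type normalisation by $\varphi$, and keeping in mind that here only a single pair $t_1,t_2$ is involved, so there is no partition and the cardinality bound of \cref{le:max_num_times} is needed only in the weaker form that, for a fixed time $t_1$, a given grid point arises from at most boundedly many dyadic scales. Write $v = \abs{t_1-t_2}^{1/2}$ and fix $m_1 \in \bbN$ with $2^{-m_1} \asymp_2 v$. By the telescoping bound \eqref{eq:f_int},
\[
\abs{\gamma(t_1)-\hat f_{t_1}(iv)} \;\lesssim\; \sum_{m \ge m_1} \Upsilon_{t_1}\bigl(\hat f_{t_1}(i2^{-m})\bigr).
\]
Since $v^2 \gtrsim 2^{-2m}$ for every $m \ge m_1$, condition $(\tilde\Delta_c)$ gives $\varphi(v^2) \gtrsim \varphi(2^{-2m})$, so after dividing by $\varphi(v^2)$ it suffices to bound $\psi$ of $\sum_{m\ge m_1} \Upsilon_{t_1}(\hat f_{t_1}(i2^{-m}))/\varphi(2^{-2m})$.

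Next I would reindex term by term over the grid exactly as in \cref{se:pf_setup}: for each $m \ge m_1$ pick $s = s(m) \in \bbN$ with $\Upsilon_{t_1}(\hat f_{t_1}(i2^{-m})) \in [e^{-4s},e^{-4(s-1)}]$, and by \cref{le:distortion,thm:fw_grid} a grid point $z = z(t_1,m) \in H(e^{-4s},M,T)$ with $\Upsilon_{t_1}(z) \asymp e^{-4s}$ and $\abs{Z_{t_1}(z)-i2^{-m}} \le 2^{-m-1}$; then $(m,t_1) \in P_n(z,s)$ for some $n$ and, by the computation in the proof of \cref{le:max_num_times}, $Y_{\sigma(S_n,z)}(z) \asymp 2^{-m} \asymp Y_{t_1}(z)$. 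Applying $(\tilde\Delta_c)$ once more, $\varphi(2^{-2m}) \asymp \varphi(Y_{\sigma(S_n,z)}(z)^2)$, so each term obeys
\[
\frac{\Upsilon_{t_1}(\hat f_{t_1}(i2^{-m}))}{\varphi(2^{-2m})} \;\lesssim\; \frac{e^{-4s}}{\varphi\bigl(Y_{\sigma(S_n,z)}(z)^2\bigr)},
\]
and, since for a fixed $t_1$ the data $(s,z,n)$ determine $m$ up to a bounded additive error, summing over $m$ is dominated by the triple sum $\sum_{s}\sum_{z\in H(e^{-4s},M,T)}\sum_n(\,\cdots\,)\,1_{S_n<\infty}$ on the right-hand side.

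The one genuinely delicate step is to move $\psi$ inside the sum over $m$ without convexity. In place of the Jensen argument used for \cref{lem:gvar_bound}, I would use the following substitute: for a summable weight sequence $(w_m)_{m\ge m_1}$ with $W=\sum_m w_m<\infty$ and any $a_m\ge0$,
\[
\psi\Bigl(\sum_m a_m\Bigr)=\psi\Bigl(\sum_m w_m\,\tfrac{a_m}{w_m}\Bigr)\le\psi\Bigl(W\sup_m\tfrac{a_m}{w_m}\Bigr)\le\Delta_W\sup_m\psi\bigl(\tfrac{a_m}{w_m}\bigr)\le\Delta_W\sum_m\psi\bigl(\tfrac{a_m}{w_m}\bigr),
\]
which uses only monotonicity of $\psi$ and the bound $\psi(Wx)\le\Delta_W\psi(x)$ from $(\Delta_c)$. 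Choosing $w_m\asymp(m-m_1+1)^{-1-\varepsilon}$ introduces a factor $(m-m_1+1)^{1+\varepsilon}\lesssim(\logp Y_{\sigma(S_n,z)}(z)^{-1})^{1+\varepsilon}$ inside $\psi$; since $\varphi$ is an arbitrary function satisfying $(\tilde\Delta_c)$, this slowly-varying factor can be absorbed by replacing $\varphi$ by the map $t\mapsto\varphi(t)(\logp 1/t)^{-1-\varepsilon}$, which again satisfies $(\tilde\Delta_c)$. Combining the three displays and collecting terms over $s$, $z$, and $n$ yields the asserted bound. The main obstacle is precisely this last interchange: one must pass from $\psi$ of a sum to a sum of $\psi$'s losing no more than a slowly-varying factor, and verify that this factor is controlled by $(\logp Y^{-1})^{1+\varepsilon}$ on the relevant scales; everything else is a routine re-run of \cref{lem:gvar_bound} with $\varphi$ in place of the $\psi$-variation weights.
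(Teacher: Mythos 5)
Your overall reduction matches the paper: telescoping via \eqref{eq:f_int}, passing to grid points via \cref{le:distortion,thm:fw_grid}, the bounded multiplicity of scales $m$ for the single time $t_1$, and the comparison $Y_{t_1}(z)\asymp Y_{\sigma(S_n,z)}(z)$ are all as in \cref{lem:gvar_bound}. The gap is exactly in the step you flag as delicate. Your weighted substitute for Jensen, with weights $w_m\asymp(m-m_1+1)^{-1-\varepsilon}$, leaves the extra factor $(m-m_1+1)^{1+\varepsilon}\lesssim\bigl(\logp Y_{\sigma(S_n,z)}(z)^{-1}\bigr)^{1+\varepsilon}$ inside $\psi$, and this factor cannot be ``absorbed by replacing $\varphi$''. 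The lemma is an inequality with one and the same $\varphi$ on both sides: if you replace $\varphi(t)$ by $\varphi(t)(\logp\frac1t)^{-1-\varepsilon}$ you enlarge the right-hand side and prove a strictly weaker bound, while if you instead apply your inequality with $\varphi(t)(\logp\frac1t)^{1+\varepsilon}$ you weaken the left-hand side; either way a slowly varying loss remains. This loss is not harmless downstream: in the application one takes $\psi(x)=x^p$ and $\varphi(t)=t^\alpha(\logp\frac1t)^{\beta}\ell(\logp\frac1t)^{1/p}$ with $\beta=1/p$ exactly critical for the convergence of $\bbE[\text{eq.~\eqref{eq:hoelder_bound_pq}}]$, so an extra $(\logp Y^{-1})^{(1+\varepsilon)p}$ under the sum would force $\beta\ge 1+1/p$, i.e.\ it would not yield \cref{thm:hoelder} (hence \cref{thm:main_hoelder}) with the stated exponent. (In the $\psi$-variation lemma such a factor is tolerated because $q>d$ has slack; here it is not.)

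The lossless argument is one step away from what you wrote, and it is exactly the paper's: keep the denominator $\varphi(2^{-2m_1})$ and use the summable ratio as the weight, $w_m=\varphi(2^{-2m})/\varphi(2^{-2m_1})$. Since $\tilde\Delta_c\to0$ as $c\searrow0$, pick $c_0<1$ with $\tilde\Delta_{c_0}<1$; monotonicity of $\varphi$ and iteration of $\varphi(c_0 x)\le\tilde\Delta_{c_0}\varphi(x)$ give geometric decay of $w_m$ in $m-m_1$, so $W=\sum_{m\ge m_1}w_m\le C$ uniformly in $m_1$. With $a_m=\Upsilon_{t_1}(\hat f_{t_1}(i2^{-m}))/\varphi(2^{-2m_1})$ one has $a_m/w_m=\Upsilon_{t_1}(\hat f_{t_1}(i2^{-m}))/\varphi(2^{-2m})$, and your own chain $\psi(\sum_m a_m)\le\psi(W\sup_m a_m/w_m)\le\Delta_W\sum_m\psi\bigl(\Upsilon_{t_1}(\hat f_{t_1}(i2^{-m}))/\varphi(2^{-2m})\bigr)$ reproduces the paper's ``slightly improved estimate'' with no logarithmic correction; the rest of your reindexing then gives the lemma as stated. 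Note that your intermediate reduction to $\psi\bigl(\sum_m\Upsilon_{t_1}(\hat f_{t_1}(i2^{-m}))/\varphi(2^{-2m})\bigr)$, i.e.\ replacing $\varphi(v^2)$ by $\varphi(2^{-2m})$ term by term, is precisely what discards the summable ratio and forces you into the artificial polynomial weights.
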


\begin{remark}
The proof shows that for any partition of $[0,T]$, the sum
\[ \sum_j \psi\left( \frac{ \abs{\gamma(t_j)-\hat f_{t_j}(i\abs{t_j-t_{j-1}}^{1/2})} }{\varphi(\abs{t_j-t_{j-1}})} \right) + \psi\left( \frac{ \abs{\gamma(t_j)-\hat f_{t_j}(i\abs{t_j-t_{j+1}}^{1/2})} }{\varphi(\abs{t_j-t_{j+1}})} \right) \]
is bounded by the same expression on the right-hand side.
\end{remark}

\begin{proof}
The proof is similar to \cref{lem:gvar_bound}. The difference is that here we can use the condition ($\tilde\Delta_c$) for $\varphi$ (instead of Jensen's inequality) and get
\[ 
\frac{\sum_{m \ge m_j} \Upsilon_{t}(\hat f_{t}(i2^{-m}))}{\varphi(2^{-2m_j})}
\lesssim \max_{m\ge m_j} \frac{\Upsilon_{t}(\hat f_{t}(i2^{-m}))}{\varphi(2^{-2m})} .
\]
Indeed, the second condition of ($\tilde\Delta_c$) implies $\sum_{m \ge m_j} \frac{\varphi(2^{-2m})}{\varphi(2^{-2m_j})} \le C$ where $C$ does not depend on $m_j$.

In particular, we have the slightly improved estimate
\[ 
\psi\left( \frac{\sum_{m \ge m_j} \Upsilon_{t}(\hat f_{t}(i2^{-m}))}{\varphi(2^{-2m_j})} \right)
\lesssim \sum_{m\ge m_j} \psi\left( \frac{\Upsilon_{t}(\hat f_{t}(i2^{-m}))}{\varphi(2^{-2m})} \right) .
\]
Using this, the proof continues the same as in \cref{lem:gvar_bound}.
\end{proof}

In the following, let us consider $\psi(x) = x^p$ and $\varphi(t) = t^{\alpha} (\logp(\frac{1}{t}))^{\beta} \ell(\logp(\frac{1}{t}))^{1/p}$ where $\ell$ is as in \eqref{eq:ell_def}. The right-hand side of \cref{lem:hoelder_bound} then becomes
\begin{equation}\label{eq:hoelder_bound_pq} 
\sum_{s \in \bbN} \sum_{z \in H(e^{-4s},M,T)} \sum_{n \in \bbN_0} e^{-4ps} \, Y_{\sigma(S_n)}^{-2p\alpha} \left(\logp Y_{\sigma(S_n)}^{-1}\right)^{-p\beta} \ell\left(\logp Y_{\sigma(S_n)}^{-1}\right)^{-1} 1_{S_n < \infty} .
\end{equation}

Recall from \cref{se:prelim_general,se:prelim_bm} (and explained again in \cref{se:pf_warmup}) that we can write
\[ Y_{\sigma(s)}(z) = y \exp\left( -\frac{2}{\kappa} C_{\kappa(s-s_0)} \right) \]
where $C$ is the radial Bessel clock defined in \cref{se:bessel}, for a radial Bessel process of index $\nu = \frac{1}{2}-\frac{4}{\kappa}$ started at $\hat\theta_{s_0} = \cot^{-1}(x/y)$ (where $s_0 = -\frac{1}{4}\log y$).

We need to estimate
\begin{equation}\label{eq:Y_mixed_moments}
\bbE\left[ Y_{\sigma(S_n)}^{-2p\alpha} \left(\logp Y_{\sigma(S_n)}^{-1}\right)^{-p\beta} \ell\left(\logp Y_{\sigma(S_n)}^{-1}\right)^{-1} 1_{S_n < \infty} \right] .
\end{equation}

Similarly as in \cref{prop:bessel_clock_mixed_moments_neg}, we can split up into the events $\{ Y_{\sigma(S_n)} \le e^{-\delta S_n} \}$ and $\{ Y_{\sigma(S_n)} \ge e^{-\delta S_n} \}$ with suitable $\delta>0$.

Beginning with $\{ Y_{\sigma(S_n)} \le e^{-\delta S_n} \}$, we find $\logp Y_{\sigma(S_n)}^{-1} \gtrsim S_n$. Applying \cref{prop:bessel_clock_mixed_moments} and noting that (by the definition of $S_n$) $S_n \in [s-2,s+1]$ and $\sin\hat\theta_{S_n} \ge \sin\cot^{-1}(1)$ whenever $S_n < \infty$, we get
\[ \begin{split}
&\bbE\left[ Y_{\sigma(S_n)}^{-2p\alpha} \left(\logp Y_{\sigma(S_n)}^{-1}\right)^{-p\beta} \ell\left(\logp Y_{\sigma(S_n)}^{-1}\right)^{-1} 1_{S_n < \infty} \,1_{Y_{\sigma(S_n)} \le e^{-\delta S_n}} \right] \\
&\quad \lesssim s^{-p\beta}\ell(s)^{-1} y^{-2p\alpha} \bbE\left[ \exp\left( \frac{4p\alpha}{\kappa} C_{\kappa(S_n-s_0)} \right) 1_{S_n < \infty} \right] \\
&\quad \lesssim s^{-p\beta}\ell(s)^{-1} y^{-2p\alpha} (\sin\cot^{-1}(x/y))^{a} \exp\left(\left(\frac{4p\alpha}{\kappa}-\frac{a}{2}\right)\kappa(s-s_0) \right) \\
&\quad \lesssim s^{-p\beta}\ell(s)^{-1} \exp\left((4p\alpha-\frac{a\kappa}{2})s \right) y^{-2p\alpha+p\alpha-a\kappa/8} (1+\abs{x}/y)^{-a} 
\end{split} \]
where $a = -\nu+\sqrt{\nu^2-\frac{8}{\kappa}p\alpha}$.

On the event $\{ Y_{\sigma(S_n)} \ge e^{-\delta S_n} \}$ we have
\[ \begin{split}
&\bbE\left[ Y_{\sigma(S_n)}^{-2p\alpha} \left(\logp Y_{\sigma(S_n)}^{-1}\right)^{-p\beta} \ell\left(\logp Y_{\sigma(S_n)}^{-1}\right)^{-1} 1_{S_n < \infty} \,1_{Y_{\sigma(S_n)} \ge e^{-\delta S_n}} \right] \\
&\quad \lesssim e^{\delta 2p\alpha s}\, \bbE[1_{S_n < \infty}] \\
&\quad \lesssim s^{-p\beta}\ell(s)^{-1} \exp\left(\left(\frac{4p\alpha}{\kappa}-\frac{a}{2}\right)\kappa s \right) (\sin\cot^{-1}(x/y))^{a} \\
&\quad \lesssim s^{-p\beta}\ell(s)^{-1} \exp\left((4p\alpha-\frac{a\kappa}{2})s \right) (1+\abs{x}/y)^{-a} .
\end{split} \]
This is true whenever $\frac{8}{\kappa}p\alpha \in {]0,\nu^2[}$, or equivalently $a \in {]-\nu, (-2\nu) \vee 0[}$.

The sum in $n$ is harmless again since the estimate $\bbP^{\nu+a}(S_n < \infty) \le \tilde p^n$ is still true in the changed measure. Indeed, since $S_n$ is defined in terms of $\hat\theta_s = \cot^{-1}(X_{\sigma(s)}/Y_{\sigma(s)})$, it holds under any measure under which $\hat\theta$ is a (time-homogeneous) Markov process (with $\tilde p$ depending on the measure). This means that summing over $n$ just gives us an additional multiplicative factor.

With the estimate for \eqref{eq:Y_mixed_moments}, we are left to investigate the convergence of
\begin{multline*}
\bbE[\text{ eq. \eqref{eq:hoelder_bound_pq} }]
\lesssim \sum_{s \in \bbN} \sum_{z \in H(e^{-4s},M,T)} \\
\exp\left((-4p+4p\alpha-\frac{a\kappa}{2})s \right) s^{-p\beta}\ell(s)^{-1} y^{(-p\alpha-a\kappa/8) \wedge 0} (1+\abs{x}/y)^{-a} .
\end{multline*}
Since $\alpha,p,a$ are related by $\frac{4}{\kappa}p\alpha = -a(\frac{a}{2}+\nu) \iff p\alpha = -\frac{a^2\kappa}{8}-\frac{a\kappa}{8}+a$, the expression can be written as
\begin{equation}\label{eq:hoelder_bound_final}
\sum_{s \in \bbN} \exp\left((-4p-\frac{a^2\kappa}{2}-a\kappa+4a)s \right) s^{-p\beta}\ell(s)^{-1} 
\sum_{z \in H(e^{-4s},M,T)} y^{(a^2\kappa/8-a) \wedge 0} (1+\abs{x}/y)^{-a} .
\end{equation}
We first sum up in $z \in H(e^{-4s},M,T)$. The result is stated in \cref{le:fw_sum_grid}. There will be three cases relevant to us. The first one will give us the desired result for $\kappa > 1$, the second case will apply to $\kappa \in {]0,1[}$, and the third case to $\kappa = 1$.

\textbf{Case 1:} Suppose either $a \ge 1$, $\frac{a^2\kappa}{8}-a > -2$ or $a \in {]-1,1]}$, $\frac{a^2\kappa}{8} > -1$. In that case the sum over $z \in H(e^{-4s},M,T)$ gives us $(e^{-4s})^{-2} = e^{8s}$, and \eqref{eq:hoelder_bound_final} reduces to
\[ \sum_{s \in \bbN} \exp\left((8-4p-\frac{a^2\kappa}{2}-a\kappa+4a)s \right) s^{-p\beta}\ell(s)^{-1} . \]
This sum converges when (recall the definition of $\ell$)
\begin{alignat*}{2}
&& 8-4p-\frac{a^2\kappa}{2}-a\kappa+4a &\le 0 \\
\text{and}\quad && -p\beta &\le -1 .
\end{alignat*}
Fix $a \in {]-\nu, (-2\nu) \vee 0[}$. To maximise $\alpha$, we need to minimise $p$ which is $p = 2-\frac{a^2\kappa}{8}-\frac{a\kappa}{4}+a$, giving us the optimal exponents
\begin{alignat*}{2}
\alpha &= \frac{1}{p}(-\frac{a^2\kappa}{8}-\frac{a\kappa}{8}+a) 
&&= \frac{-\frac{a^2\kappa}{8}-\frac{a\kappa}{8}+a}{2-\frac{a^2\kappa}{8}-\frac{a\kappa}{4}+a} ,\\
\beta &= \frac{1}{p}
&&= \frac{1}{2-\frac{a^2\kappa}{8}-\frac{a\kappa}{4}+a}
\end{alignat*}
provided that we are in Case 1. Optimising over $a$ yields (after a tedious but elementary computation) $a = \frac{16-4\sqrt{\kappa+8}}{\kappa}$ and $\alpha = 1-\frac{\kappa}{24+2\kappa-8\sqrt{8+\kappa}}$.

One can can check that with this choice of $a$, we are indeed in Case 1 when $\kappa > 1$. Finally, we have
\[
p = \frac{(12+\kappa)\sqrt{8+\kappa}-4(8+\kappa)}{\kappa} > 1 .
\]

\textbf{Case 2:} The second relevant case is $a > 1$, $\frac{a^2\kappa}{8}-a < -2$. In that case the sum over $z \in H(e^{-4s},M,T)$ gives us $(e^{-4s})^{\frac{a^2\kappa}{8}-a} = e^{(4a-\frac{a^2\kappa}{2})s}$, and \eqref{eq:hoelder_bound_final} reduces to
\[ \sum_{s \in \bbN} \exp\left((-4p-a^2\kappa-a\kappa+8a)s \right) s^{-p\beta}\ell(s)^{-1} . \]
This sum converges when (recall the definition of $\ell$)
\begin{alignat*}{2}
&& -4p-a^2\kappa-a\kappa+8a &\le 0 \\
\text{and}\quad && -p\beta &\le -1 .
\end{alignat*}
Fix $a \in {]-\nu, (-2\nu) \vee 0[}$. To maximise $\alpha$, we need to minimise $p$ which is $p = -\frac{a^2\kappa}{4}-\frac{a\kappa}{4}+2a$, giving us the optimal exponents
\begin{alignat*}{2}
\alpha &= \frac{1}{p}(-\frac{a^2\kappa}{8}-\frac{a\kappa}{8}+a) 
&&= \frac{1}{2} ,\\
\beta &= \frac{1}{p}
&&= \frac{1}{-\frac{a^2\kappa}{4}-\frac{a\kappa}{4}+2a}
\end{alignat*}
provided that we are in Case 2. Indeed, we can pick such $a$ when $\kappa < 1$, giving us some corresponding $p>1$.

\textbf{Case 3:} $\kappa=1$, $a=4$. In that case the sum over $z \in H(e^{-4s},M,T)$ gives us the extra factor $(e^{-4s})^{-2}\logp(e^{4s}) = e^{8s}s$, and \eqref{eq:hoelder_bound_final} reduces to
\[ \sum_{s \in \bbN} \exp\left((12-4p)s \right) s^{-p\beta+1}\ell(s)^{-1} . \]
The minimal $p$ to make the sum converge is then $p=3$, giving us the exponents $\alpha = \frac{1}{2}$ and $\beta = \frac{2}{3}$.

\begin{remark}
In case $\kappa \le 1$ (i.e.\@ Cases 2 and 3 above), we can get rid of the boundary effect by restricting to the time interval $[t_0,T]$. In that case, by \cref{le:sle_hit_after_1}, it suffices to consider points $z \in H(e^{-4s},M,T)$ with $\Im z \ge \varepsilon$. By \cref{rm:sum_grid_capped}, the sum over such $z$ is bounded by $\varepsilon^{a^2\kappa/8-a+2}e^{8s}$. From here, we can follow Case 1, with an additional factor of $\varepsilon^{a^2\kappa/8-a+2}$.

By \cref{le:sle_hit_after_1}, the probability that this $\varepsilon$ does not suffice is less than $\varepsilon^{4/\kappa-1}$. This implies the result of \cref{thm:main_hoelder} in the case $\kappa\le 1$, and that the Hölder constant has finite moments for $\tilde p < p\,\frac{4/\kappa-1}{4/\kappa-1-a^2\kappa/8+a-2}$. A calculation shows that the latter exponent is still greater than $1$ (even greater than $2.99$).
\end{remark}

\begin{theorem}\label{thm:hoelder}
Let $\kappa \in {]1,8[} \cup {]8,\infty[}$. Let $\varphi(t) = t^{\alpha} (\logp(\frac{1}{t}))^{\beta} \ell(\logp(\frac{1}{t}))^{\beta}$ with $\alpha = 1-\frac{\kappa}{24+2\kappa-8\sqrt{8+\kappa}}$, $\beta = \frac{\kappa}{(12+\kappa)\sqrt{8+\kappa}-4(8+\kappa)}$, and $\ell$ as in \eqref{eq:ell_def}.
Then 
\[ \bbE\left[ \sup_{t_1,t_2 \in [0,T]} \left( \frac{\abs{\gamma(t_1)-\gamma(t_2)}}{\varphi(\abs{t_1-t_2})} \right)^{p} \right] < \infty  \]
for any $p < 1/\beta$. 

In case $\kappa \in {]0,1]}$, there exists $\beta < 1$ such that the same is true with $\varphi(t) = t^{1/2} (\logp(\frac{1}{t}))^{\beta}$.

In particular, there almost surely exists some random $C < \infty$ such that
\[ \abs{\gamma(t_1)-\gamma(t_2)} \le C \varphi(\abs{t_1-t_2}) \]
for all $t_1,t_2 \in [0,T]$.
\end{theorem}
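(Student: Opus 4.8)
The plan is to run the argument of the proof of \cref{thm:gvar}, with \cref{lem:gvar_bound} replaced by \cref{lem:hoelder_bound}, and then to remove the restriction to $\{\norm{\xi}_{[0,T]}\le M\}$ by one extra H\"older step --- this is where the constraint $p<1/\beta$ enters. Write $V=\sup_{t_1,t_2\in[0,T]}\abs{\gamma(t_1)-\gamma(t_2)}/\varphi(\abs{t_1-t_2})$ and $p^*=1/\beta$ (recall $p^*>1$). Working on $\{\norm{\xi}_{[0,T]}\le M\}$, for $t_1<t_2$ put $v=\abs{t_1-t_2}^{1/2}$ and split $\abs{\gamma(t_1)-\gamma(t_2)}\le\abs{\gamma(t_1)-\hat f_{t_1}(iv)}+\abs{\hat f_{t_1}(iv)-\hat f_{t_2}(iv)}+\abs{\hat f_{t_2}(iv)-\gamma(t_2)}$. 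Applying $\psi(x)=x^{p^*}$ and the condition $(\Delta_c)$, the first and third terms are bounded \emph{uniformly in $t_1,t_2$} by the right-hand side of \cref{lem:hoelder_bound} (which does not depend on $t_1,t_2$), hence so is their supremum. For the middle term I would use \eqref{eq:f_diff} together with \cref{lem:brownian_increment}: for the pair $(t_1,t_2)$ choose $m$ with $2^{-m}\asymp v$, the grid point $z=z(t_1,m)\in H(e^{-4s},\norm{\xi},T)$ with $\Upsilon_{t_1}(z)\asymp e^{-4s}$, and the index $n$ with $(m,t_1)\in P_n(z,s)$; since $t_2>t_1$ and $\abs{t_2-t_1}\asymp 2^{-2m}$, \cref{lem:brownian_increment} gives $\abs{\xi(t_2)-\xi(t_1)}\abs{t_2-t_1}^{-1/2}\le M_n(z,s)$, so \eqref{eq:f_diff} bounds $\abs{\hat f_{t_1}(iv)-\hat f_{t_2}(iv)}$ by a constant times $e^{-4s}(1+M_n(z,s)^{2l})$, while $\varphi(\abs{t_1-t_2})\asymp\varphi(Y_{\sigma(S_n,z)}(z)^2)$. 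Summing over all triples $(s,z,n)$, each of which occurs at most $N$ times by \cref{le:max_num_times}, gives on $\{\norm{\xi}_{[0,T]}\le M\}$ the bound $V^{p^*}\le C\,\Xi_M$ with
\[ \Xi_M=\sum_{s\in\bbN}\ \sum_{z\in H(e^{-4s},M,T)}\ \sum_{n\in\bbN_0}\psi\!\left(\frac{e^{-4s}(1+M_n(z,s)^{2l})}{\varphi(Y_{\sigma(S_n,z)}(z)^2)}\right)1_{S_n(z,s)<\infty}. \]

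\textbf{Step 2 (finiteness of $\bbE[\Xi_M]$).} Since $M_n(z,s)$ is independent of $\mathcal F_{\sigma(S_n)}$ and has all exponential moments, and $(e^{-4s}(1+M_n^{2l}))^{p^*}\lesssim e^{-4p^*s}(1+M_n^{2lp^*})$, taking expectations reduces $\bbE[\Xi_M]$ --- up to a constant, and with the sum over $n$ contributing only a bounded factor as in the preceding computation --- to the expression \eqref{eq:hoelder_bound_pq} with $p=p^*$, whose expectation was shown above (Case 1, valid for $\kappa>1$) to be finite, and is bounded by a polynomial in $M$ via \cref{le:fw_sum_grid}. For $\kappa\in{]0,1]}$ the same scheme applies verbatim using Case 2 (resp.\ Case 3 for $\kappa=1$) instead of Case 1, with the modulus $\varphi(t)=t^{1/2}(\logp\tfrac1t)^{\beta}$, $\beta<1$, and $p^*=1/\beta$.

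\textbf{Step 3 (removing the bound on $\norm{\xi}$; the exponent $p<1/\beta$).} Partition $\Omega$ into $A_0=\{\norm{\xi}_{[0,T]}<1\}$ and $A_k=\{2^{k-1}\le\norm{\xi}_{[0,T]}<2^k\}$, $k\ge 1$. On $A_k$ we have $V^{p^*}\le C\,\Xi_{2^k}$, so for $p<p^*$, applying H\"older's inequality with exponents $p^*/p$ and $p^*/(p^*-p)$,
\[ \bbE[V^{p}\,1_{A_k}]\le C\,\bbE[\Xi_{2^k}^{p/p^*}1_{A_k}]\le C\,\bbE[\Xi_{2^k}]^{p/p^*}\,\bbP(A_k)^{1-p/p^*}. \]
Here the strict inequality $p<p^*$ (i.e.\ $p/p^*<1$) is essential. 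Since $\bbE[\Xi_{2^k}]$ grows only polynomially in $2^k$ while $\bbP(A_k)\le\bbP\bigl(\sup_{[0,T]}\abs{B}\ge 2^{k-1}/\sqrt\kappa\bigr)$ decays like $e^{-c4^k}$, the series over $k$ converges, so $\bbE[V^{p}]<\infty$. In particular $V<\infty$ almost surely, which is the last assertion of the theorem (take the random constant to be $V$).

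\textbf{Main obstacle.} The subtle point is Step 1: passing from the single-pair estimate of \cref{lem:hoelder_bound} to a bound on the supremum over \emph{all} pairs $(t_1,t_2)$, and in particular controlling the middle term $\abs{\hat f_{t_1}(iv)-\hat f_{t_2}(iv)}$, for which the naive Brownian modulus $\sup_{t\ne u}\abs{\xi(t)-\xi(u)}\abs{t-u}^{-1/2}$ is infinite; this is exactly what \cref{lem:brownian_increment} is for, as it localises the relevant time increments inside each block $P_n(z,s)$ where they are controlled by a single well-behaved random variable. A secondary subtlety is that one only obtains the critical moment $p^*$ on each event $\{\norm{\xi}\le M\}$, and the slack $p<1/\beta$ is precisely what allows the H\"older splitting of Step 3 to beat the polynomial $M$-dependence of $\bbE[\Xi_M]$ against the Gaussian tail of $\norm{\xi}_{[0,T]}$.
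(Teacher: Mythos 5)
Your proposal is correct and follows essentially the same route as the paper: the three-term splitting with the outer terms handled by \cref{lem:hoelder_bound}, the middle term handled via \eqref{eq:f_diff} together with the grid point $z(t_1,m)$ and \cref{lem:brownian_increment}, and the reduction to \eqref{eq:hoelder_bound_pq} up to an independent factor with exponential moments. Your Step 3 merely makes explicit (via dyadic shells and H\"older) the paper's remark that the polynomial $M$-dependence from \cref{le:fw_sum_grid} is beaten by the Gaussian tail of $\norm{\xi}_{[0,T]}$, which is exactly where the restriction $p<1/\beta$ enters.
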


\begin{proof}
We can restrict to the event $\{ \norm{\xi}_{[0,T]} \le M \}$ since all our previous estimates depend on $M$ polynomially (the dependence comes from \cref{le:fw_sum_grid}), whereas the probability of $\bbP( \norm{\xi}_{[0,T]} > M )$ decays exponentially in $M$.

So we are almost reduced to what we have estimated above. Writing $v = \abs{t_1-t_2}^{1/2}$, we have
\begin{multline*}
\left( \frac{\abs{\gamma(t_1)-\gamma(t_2)}}{\varphi(\abs{t_1-t_2})} \right)^p \\
\lesssim \left( \frac{\abs{\gamma(t_1)-\hat f_{t_1}(iv)}}{\varphi(\abs{t_1-t_2})} \right)^p 
+ \left( \frac{\abs{\hat f_{t_1}(iv)-\hat f_{t_2}(iv)}}{\varphi(\abs{t_1-t_2})} \right)^p 
+ \left( \frac{\abs{\gamma(t_2)-\hat f_{t_2}(iv))}}{\varphi(\abs{t_1-t_2})} \right)^p .
\end{multline*}
The first and the third term appear already in \cref{lem:hoelder_bound} which we have bounded by \eqref{eq:hoelder_bound_pq}. Recall that the expression \eqref{eq:hoelder_bound_pq} does not depend on the choice of $t_1,t_2$, and that $\bbE[\text{ eq. \eqref{eq:hoelder_bound_pq} }] < \infty$.

So we are left to estimate the middle term. We show that it is bounded by \eqref{eq:hoelder_bound_pq} as well.

By \eqref{eq:f_diff}, we have
\[
\abs{\hat f_{t_1}(iv)-\hat f_{t_2}(iv)} 
\le \Upsilon_{t_1}(\hat f_{t_1}(iv)) \left( 1 + \left(\frac{\abs{\xi(t_1)-\xi(t_2)}^2}{\abs{t_1-t_2}}\right)^l \right) .
\]

Pick $m \in \bbN$ with $2^{-m} \asymp_2 v$.

As in the proof of \cref{lem:gvar_bound}, we can find $s \in \bbN$ such that $\Upsilon_{t_1}(\hat f_{t_1}(i2^{-m})) \in [e^{-4s},e^{-4(s-1)}]$, and as before, we can replace $\hat f_{t_1}(i2^{-m})$ by $z(t_1,m) \in H(e^{-4s},\norm{\xi},T)$, and we have $Y_{t_1}(z(t_1,m)) \asymp 2^{-m}$ and $\Upsilon_{t_1}(z(t_1,m)) \asymp \Upsilon_{t_1}(\hat f_{t_1}(i2^{-m})) \asymp e^{-4s}$, and $(m,t_1) \in P(z,s)$.

Finally, since $\abs{t_1-t_2} = v^2 \asymp_4 2^{-2m}$, by \cref{lem:brownian_increment}
\[ \left(\frac{\abs{\xi(t_1)-\xi(t_2)}^2}{\abs{t_1-t_2}}\right)^l \le M_n(z,s)^{2l} . \]
Hence,
\[ \begin{split}
&\left( \frac{\abs{\hat f_{t_1}(iv)-\hat f_{t_2}(iv)}}{\varphi(\abs{t_1-t_2})} \right)^p \\
&\quad \le \left( \Upsilon_{t_1}(\hat f_{t_1}(iv)) \left( 1 + \left(\frac{\abs{\xi(t_1)-\xi(t_2)}^2}{\abs{t_1-t_2}}\right)^l \right) \varphi(\abs{t_1-t_2})^{-1} \right)^p \\
&\quad \lesssim \sum_{s \in \bbN} \sum_{z \in H(e^{-4s},M,T)} \sum_{n \in \bbN_0} e^{-4ps} (1+M_n(z,s)^{2pl}) \varphi(Y_{\sigma(S_n)}^2)^{-p} \,1_{S_n(z,s) < \infty} .
\end{split} \]
We see that this sum is also bounded by the expression \eqref{eq:hoelder_bound_pq} except for a factor $(1+M_n(z,s)^{2lp})$. But when taking the expectation, that factor is irrelevant since it is independent of $\mathcal F_{\sigma(S_n)}$ and has finite exponential moments.
\end{proof}

\subsection{The case $\kappa=8$}
\label{se:sle8}

In the case $\kappa = 8$ upper bounding the increment $\abs{\gamma(t)-\gamma(s)}$ as described in \cref{rm:crad_vs_path} is not sufficient any more. The reason is that the radial Bessel process $\hat\theta$ from \cref{se:prelim_bm} has index $\nu = 0$, i.e.\@ it barely misses the boundary. As a consequence, the corresponding Bessel clock has heavy tails (see \cref{pr:bes_clock_critical}). Therefore our bounds on $Y_{\sigma(s)}$ become much worse than in the previous section; in particular, we cannot freely sum them any more. (Such obstacle has been already observed by \cite{al-sle8}.)

A major improvement is given by \cite[Lemma 7.1]{kms-sle48x} (building on \cite[Section 3]{ghm-kpz-sle}). They prove that with high probability, the SLE$_8$ trace fills a ball of radius $\delta^{1+\varepsilon}$ before travelling distance $\delta$. More precisely, if $\abs{\gamma(t)-\gamma(s)} > \delta$, then $B(\gamma(u),\delta^{1+\varepsilon}) \subseteq \gamma[s,t]$ for some $u \in [s,t]$. Notice that in the half-plane capacity parametrisation we have $\Im g_s(\gamma(u)) \le 2\abs{t-s}^{1/2}$ since $2\abs{t-s} = \hcap K_t - \hcap K_s = \hcap g_s(K_t\setminus K_s) \ge \frac{1}{2} (\Im g_s(\gamma(u)))^2$ (the last inequality is \cite[Lemma 1]{lln-hcap}).

In particular, we extract from \cite[Lemma 7.1]{kms-sle48x} the following property which turns out to be the relevant information.

\begin{lemma}\label{le:ball_filled}
Fix $\varepsilon > 0$. For $\delta > 0$ the following event holds with probability at least $1-o^\infty(\delta)$ (where $o^\infty(\delta)$ means a function that decays faster than any power of $\delta$):

If $s < t$, $\abs{\gamma(s)} < 1$, and $\abs{\gamma(t)-\gamma(s)} > \delta$, then there exists $z \in K_t \setminus K_s$ such that $\Upsilon_s(z) \ge \delta^{1+\varepsilon}$ and $Y_s(z) \le  2\abs{t-s}^{1/2}$.
\end{lemma}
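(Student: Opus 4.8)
The statement is a repackaging of \cite[Lemma 7.1]{kms-sle48x} (which rests on \cite[Section 3]{ghm-kpz-sle}), together with the elementary capacity bound recalled just before the lemma. I would fix an auxiliary $\varepsilon' \in (0,\varepsilon)$ and take as input from \cite[Lemma 7.1]{kms-sle48x} the following: with probability $1-o^\infty(\delta)$ the SLE$_8$ trace has the property that for \emph{every} pair $s<t$ with $\abs{\gamma(s)}<1$ and $\abs{\gamma(t)-\gamma(s)}>\delta$ there is a time $u\in[s,t]$ such that the ball $B(\gamma(u),\delta^{1+\varepsilon'})$ is contained in the freshly traced region $\gamma[s,t]$ and its centre lies at Euclidean distance $\gtrsim \delta^{1+\varepsilon'}$ from $\gamma[0,s]\cup\bbR$ (this last clause is automatic from the construction, since the ball it produces is carved out of the region filled strictly between times $s$ and $t$). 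All that follows is carried out on this event.

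\textbf{Producing $z$.} The plan is to set $z=\gamma(u)$. Since $z\in\gamma[s,t]\subseteq K_t$ and $\dist(z,K_s\cup\bbR)\gtrsim\delta^{1+\varepsilon'}>0$, one has $z\in\bbH\setminus K_s=H_s$, hence $z\in H_s\cap K_t=K_t\setminus K_s$. Because $B(z,\delta^{1+\varepsilon'})$ lies in $\barH$ and avoids $K_s$, it is contained in $H_s$, so by the Schwarz lemma (conformal radius is monotone in the domain and $\crad(z,B(z,\rho))=\rho$) we get $2\Upsilon_s(z)=\crad(z,H_s)\ge\dist(z,\partial H_s)\gtrsim\delta^{1+\varepsilon'}$. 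As $\delta^{1+\varepsilon'}\gg\delta^{1+\varepsilon}$ for small $\delta$, this yields $\Upsilon_s(z)\ge\delta^{1+\varepsilon}$ once $\delta$ is small enough, which is all that an $o^\infty(\delta)$ statement requires.

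\textbf{The bound on $Y_s(z)$.} This part is deterministic and essentially free. Writing $\tilde g$ for the Loewner flow driven by $\xi(s+\cdot)$, one has $g_t=\tilde g_{t-s}\circ g_s$ and the hull $\tilde K_{t-s}=g_s(K_t\setminus K_s)$ satisfies $\hcap(\tilde K_{t-s})=\hcap(K_t)-\hcap(K_s)=2(t-s)$. Since $z\in K_t\setminus K_s$, we get $g_s(z)\in\tilde K_{t-s}$, so by the capacity--height estimate \eqref{eq:hcap_bound} in the sharp form $\operatorname{height}(K)^2\le 2\hcap(K)$ (this is \cite[Lemma 1]{lln-hcap}) we obtain $(\Im g_s(z))^2\le 2\cdot 2(t-s)$; and $Y_s(z)=\Im Z_s(z)=\Im g_s(z)$ because $\xi$ is real-valued. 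Hence $Y_s(z)\le 2\abs{t-s}^{1/2}$, and the good event has probability $1-o^\infty(\delta)$ as claimed.

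\textbf{Main obstacle.} All the genuinely probabilistic and geometric content is imported from \cite[Lemma 7.1]{kms-sle48x}; the only point on our side that needs care is the topological bookkeeping in the second step — that the filled ball can be taken inside $H_s$ with centre bounded away from $\partial K_s$, rather than grazing it. This is precisely the information that the construction in \cite[Section 3]{ghm-kpz-sle}, \cite[Lemma 7.1]{kms-sle48x} supplies (the ball is produced inside the region filled between the two times, which is separated from $K_s$), so I would state the borrowed lemma in the slightly more precise form above rather than merely as ``$B(\gamma(u),\delta^{1+\varepsilon'})\subseteq\gamma[s,t]$''; with that in hand the remainder is routine, the power loss $\varepsilon'<\varepsilon$ being harmless for small $\delta$.
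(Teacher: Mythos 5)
Your proposal is correct and follows essentially the same route as the paper, which likewise does not prove this lemma from scratch but extracts it from \cite[Lemma 7.1]{kms-sle48x}: take $z=\gamma(u)$, the centre of the filled ball, bound $\Upsilon_s(z)$ below by the ball radius via monotonicity of conformal radius (the ball being carved out of $K_t\setminus K_s$, hence away from $K_s$), and bound $Y_s(z)=\Im g_s(z)$ by $2\abs{t-s}^{1/2}$ through $2\abs{t-s}=\hcap(g_s(K_t\setminus K_s))\ge\tfrac12(\Im g_s(z))^2$ from \cite[Lemma 1]{lln-hcap}. Your use of an auxiliary $\varepsilon'<\varepsilon$ to absorb the constant-factor losses is exactly the slack the paper implicitly relies on, so the two arguments coincide in substance.
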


\begin{remark}
This lemma removes the need of estimating the total path length $\int_0^v \abs{\hat f_s'(iu)}\, du \asymp \sum_m \Upsilon_s(\hat f_s(iv2^{-m}))$. Instead, we can immediately estimate $\abs{\gamma(t)-\gamma(s)}$ by $\Upsilon_s(\hat f_s(z))^{1/(1+\varepsilon)}$ for some $z$ with $\Im z \le \abs{t-s}^{1/2}$. This differs to the lower bound in \cref{rm:crad_vs_path} by a small exponent. In case $\kappa = 8$, this upper bound performs much better than the sum $\sum_m \Upsilon_s(\hat f_s(iv2^{-m}))$.
\end{remark}

Note that by applying \cref{le:ball_filled} with $\delta_m = 2^{-m}\delta$, $m\in\bbN$, we see that with probability at least $1-o^\infty(\delta)$, the following holds: If $s < t$, $\abs{\gamma(s)} < 1$, and $\abs{\gamma(t)-\gamma(s)} \le \delta$, then there exists $z \in K_t \setminus K_s$ such that $\Upsilon_s(z) \ge \abs{\gamma(t)-\gamma(s)}^{1+\varepsilon}$ and $Y_s(z) \le  2\abs{t-s}^{1/2}$.

We fix a finite time interval $[0,T]$. Observe that the Loewner equation automatically guarantees $\abs{\gamma(s)} \le \norm{\xi}_{\infty;T}+2\sqrt{T}$ for $s \in [0,T]$. Note that $\bbP( \norm{\xi}_{\infty;T} > M) = o^\infty(1/M)$. Therefore, it follows from \cref{le:ball_filled} via scaling that the following event has probability $\bbP(E_{M}) = 1-o^\infty(1/M)$.
\[
E_{M} = \{ \norm{\xi}_{\infty;[0,T]} \le M \} \cap 
\left\{ \begin{array}{cc}
\text{For any } 0 \le s < t \le T, \text{ there exists } z \in K_T \text{ such that } \\
\Upsilon_s(z) \ge M^{-1}\abs{\gamma(t)-\gamma(s)}^{1+\varepsilon} \text{ and } Y_s(z) \le 2\abs{t-s}^{1/2} 
\end{array} \right\} .
\]

As in the case $\kappa \neq 8$, we can restrict to the points on the grid $H(h,\norm{\xi}_\infty, T)$ defined in \eqref{eq:fw_grid}. If $\Upsilon_s(z) \ge \delta^{1+\varepsilon}$, then (by \cref{le:distortion}) we can find $\tilde z \in H(\delta^{1+\varepsilon},\norm{\xi}_\infty,T)$ such that $Y_s(\tilde z) \le \frac{3}{2}Y_s(z)$ and $\Upsilon_s(\tilde z) \ge \frac{27}{160}\Upsilon_s(z)$.

In what follows, we let $\varphi \colon {[0,\infty[} \to {[0,\infty[}$ be a non-decreasing function such that for every $c>1$ there exists $\tilde\Delta_c$ such that $\varphi(ct) \le \tilde\Delta_c \varphi(t)$ for all $t$. Now, repeating the proof of \cref{lem:gvar_bound}, we get the following statement.

\begin{lemma}\label{le:hoelder_bound_8}
Let $M,T > 0$ and $\varepsilon > 0$. There exists $C > 0$ depending on $\varphi,T,\varepsilon$ such that if $E_M$ holds, then
\[
\frac{\abs{\gamma(t_1)-\gamma(t_2)}}{\varphi(\abs{t_1-t_2})} \le C M^{1/(1+\varepsilon)} \sup_{s \in \bbN} \sup_{z \in H(e^{-4s},M,T)} \frac{e^{-4s/(1+\varepsilon)}}{\varphi(Y_{\sigma(s+1,z)}(z)^2)}
\]
for any $t_1,t_2 \in [0,T]$.
\end{lemma}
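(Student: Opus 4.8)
The plan is to run the argument of \cref{lem:gvar_bound,lem:hoelder_bound} once more, the only (simplifying) change being that the whole ``tower'' sum $\sum_m \Upsilon_s(\hat f_s(iv2^{-m}))$ used there to bound $\abs{\gamma(t)-\gamma(s)}$ is replaced by the single conformal radius furnished by \cref{le:ball_filled} and packaged into the event $E_M$.

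First I would reduce to a single increment. Fix $t_1 \neq t_2$ in $[0,T]$, assume without loss of generality $t_2 < t_1$, and write $s = t_2$, $t = t_1$, $\delta = \abs{\gamma(t_1)-\gamma(t_2)}$. On $E_M$ there is $z \in K_T$ with $\Upsilon_s(z) \ge M^{-1}\delta^{1+\varepsilon}$ and $Y_s(z) \le 2\abs{t-s}^{1/2}$. Since $K_T \subseteq [\pm M]\times[0,2\sqrt T]$ on $E_M$, and since $H_s \subseteq \bbH$ forces $\Im z \ge \tfrac12\crad(z,H_s) = \Upsilon_s(z)$, the point $z$ lies in $[-M,M]\times[M^{-1}\delta^{1+\varepsilon},\sqrt{1+4T}]$. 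As in \cref{lem:gvar_bound} we may restrict to $\abs{t_1-t_2}$ small (this is all that matters for a modulus of continuity), so that there is $\bar s \in \bbN$ with $e^{-4\bar s} \asymp M^{-1}\delta^{1+\varepsilon}$ and $e^{-4\bar s} < \Upsilon_s(z)$; equivalently $\delta \asymp M^{1/(1+\varepsilon)} e^{-4\bar s/(1+\varepsilon)}$.

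Next I would pass to the grid $H(e^{-4\bar s},M,T)$. Because $e^{-4\bar s} < \Upsilon_s(z)$ and $z$ lies in the region above, the grid contains a point within Euclidean distance $\tfrac18 e^{-4\bar s} < \tfrac18\Upsilon_s(z) = \tfrac18 Y_s(z)\abs{\hat f_s'(Z_s(z))}$ of $z$; applying \cref{le:distortion} with $f = \hat f_s$, $w = Z_s(z)$ produces $\tilde z \in H(e^{-4\bar s},M,T)$ with $\abs{Z_s(\tilde z)-Z_s(z)} < \tfrac12 Y_s(z)$, hence
\[
Y_s(\tilde z) \le \tfrac32 Y_s(z) \le 3\abs{t-s}^{1/2}, \qquad \Upsilon_s(\tilde z) \ge \tfrac{27}{160}\Upsilon_s(z) \ge \tfrac{27}{160}M^{-1}\delta^{1+\varepsilon} > e^{-4(\bar s+1)},
\]
using $\tfrac{27}{160} > e^{-4}$ and $e^{-4\bar s} < M^{-1}\delta^{1+\varepsilon}$ in the last step. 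Finally I would convert to the conformal-radius parametrisation: since $t\mapsto\Upsilon_t(\tilde z)$ is non-increasing and $\Upsilon_s(\tilde z) > e^{-4(\bar s+1)}$ we get $s \le \sigma(\bar s+1,\tilde z)$ (a.s.\ finite, as for $\kappa=8$ every point — in particular every point of the countable union of the grids — is swallowed in finite time), and since $t\mapsto Y_t(\tilde z)$ is non-increasing, $Y_{\sigma(\bar s+1,\tilde z)}(\tilde z) \le Y_s(\tilde z)$. Combining $\abs{t-s} \ge \tfrac19 Y_s(\tilde z)^2$ with the doubling property $\varphi(r/9) \gtrsim \varphi(r)$ (valid since $9>1$) gives $\varphi(\abs{t_1-t_2}) \ge \varphi\bigl(\tfrac19 Y_s(\tilde z)^2\bigr) \gtrsim \varphi\bigl(Y_{\sigma(\bar s+1,\tilde z)}(\tilde z)^2\bigr)$, whence
\[
\frac{\abs{\gamma(t_1)-\gamma(t_2)}}{\varphi(\abs{t_1-t_2})} = \frac{\delta}{\varphi(\abs{t_1-t_2})} \lesssim \frac{M^{1/(1+\varepsilon)} e^{-4\bar s/(1+\varepsilon)}}{\varphi\bigl(Y_{\sigma(\bar s+1,\tilde z)}(\tilde z)^2\bigr)} \le C M^{1/(1+\varepsilon)} \sup_{s\in\bbN}\ \sup_{z\in H(e^{-4s},M,T)} \frac{e^{-4s/(1+\varepsilon)}}{\varphi(Y_{\sigma(s+1,z)}(z)^2)} .
\]

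I expect no deep obstacle: once \cref{le:ball_filled} is granted, everything is deterministic bookkeeping. The fiddliest points are (i) keeping the relation $\delta \asymp M^{1/(1+\varepsilon)} e^{-4\bar s/(1+\varepsilon)}$ accurate up to a multiplicative constant while moving from $z$ to the grid point $\tilde z$, so that the exponent $1/(1+\varepsilon)$ is not spoiled; (ii) the bounded range of $\abs{t_1-t_2}$ for which no suitable $\bar s\in\bbN$ exists, which contributes only a multiplicative constant that is absorbed into $C$; and (iii) applying the doubling condition on $\varphi$ in the direction $\varphi(r)\lesssim\varphi(r/c)$ for a fixed $c>1$.
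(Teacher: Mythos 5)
Your proposal follows exactly the route the paper intends (the paper itself only says ``repeating the proof of \cref{lem:gvar_bound}''): use the point $z$ supplied by $E_M$ instead of the dyadic sum over $\Upsilon_t(\hat f_t(i2^{-m}))$, pass to a grid point $\tilde z$ via \cref{le:distortion}, use monotonicity of $\Upsilon$ and $Y$ in $t$ to get $s\le\sigma(\bar s+1,\tilde z)$ and $Y_{\sigma(\bar s+1,\tilde z)}(\tilde z)\le 3\abs{t_1-t_2}^{1/2}$, and finish with the doubling property of $\varphi$. All of that bookkeeping (the $\frac{27}{160}>e^{-4}$ comparison, the a.s.\ finiteness of $\sigma(\bar s+1,\tilde z)$ for $\kappa=8$, the factor $\tilde\Delta_9$) is correct.

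The one step you should not dismiss is the existence of $\bar s\in\bbN$ with $e^{-4\bar s}\asymp M^{-1}\delta^{1+\varepsilon}$. ``Restricting to $\abs{t_1-t_2}$ small'' does not achieve this: a small time increment does not a priori give a small $\delta=\abs{\gamma(t_1)-\gamma(t_2)}$ (that is precisely the modulus being proved), and the lemma is a deterministic bound for \emph{all} pairs $t_1,t_2\in[0,T]$ with $C$ independent of $M$, so the regime $M^{-1}\delta^{1+\varepsilon}>e^{-4}$ cannot simply be ``absorbed into $C$'': the crude bound $\delta\lesssim_T M$ valid on $\{\norm{\xi}_\infty\le M\}$ would leave you off by a factor $M^{\varepsilon/(1+\varepsilon)}$. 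The fix is one line and comes from $E_M$ itself: the point $z$ it furnishes satisfies $M^{-1}\delta^{1+\varepsilon}\le\Upsilon_s(z)=\tfrac12\crad(z,H_s)\le\tfrac12\crad(z,\bbH)=\Im z\le 2\sqrt T$ (since $z\in K_T$), so $\delta^{1+\varepsilon}\le 2\sqrt T\,M$ for \emph{every} pair. Taking $\bar s$ to be the smallest integer with $e^{-4\bar s}\le M^{-1}\delta^{1+\varepsilon}$, either $\bar s\ge 2$ and $\delta\le e^{4}M^{1/(1+\varepsilon)}e^{-4\bar s/(1+\varepsilon)}$, or $\bar s=1$ and $\delta\le(1+2e^{4}\sqrt T)\,M^{1/(1+\varepsilon)}e^{-4/(1+\varepsilon)}$; in both cases the loss is a constant depending only on $T,\varepsilon$, and the remainder of your argument then goes through verbatim for all $t_1,t_2\in[0,T]$.
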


\begin{remark}
Similarly, the proof shows that if $E_M$ holds, then
\[
\sum_j \psi\left(\frac{\abs{\gamma(t_j)-\gamma(t_{j-1})}}{\varphi(\abs{t_j-t_{j-1}})}\right) 
\le C \sum_{s \in \bbN} \sum_{z \in H(e^{-4s},M,T)} \psi\left(M^{1/(1+\varepsilon)} \frac{e^{-4s/(1+\varepsilon)}}{\varphi(Y_{\sigma(s+1,z)}(z)^2)}\right)
\]
for any partition of $[0,T]$.

In particular, putting $\varphi(t) = 1$ and $\psi(x) = x^{2+\tilde\varepsilon}$, we recover that the SLE$_8$ trace has finite $(2+\tilde\varepsilon)$-variation. With other choices of $\varphi$ and $\psi$, we can interpolate between variation regularity and Hölder-type modulus and get a range of regularity statements.
\end{remark}

Recall from \cref{se:prelim_general,se:prelim_bm} (and explained again in \cref{se:pf_warmup}) that we can write
\[ Y_{\sigma(s)}(z) = y \exp\left( -\frac{2}{\kappa} C_{\kappa(s-s_0)} \right) \]
where $C$ is the radial Bessel clock defined in \cref{se:bessel}, for a radial Bessel process of index $\nu = \frac{1}{2}-\frac{4}{\kappa} = 0$ started at $\hat\theta_{s_0} = \cot^{-1}(x/y)$ (where $s_0 = -\frac{1}{4}\log y$).

Let $\ell$ be as in \eqref{eq:ell_def}. By \cref{pr:bes_clock_critical}, we have that
\[ \begin{split}
\bbP( C_{\kappa(s-s_0)} > re^{16s}s^{4}\ell(s)^2 ) 
&\lesssim (s-s_0+\abs{\log\sin\hat\theta_{s_0}})\,r^{-1/2}e^{-8s}s^{-2}\ell(s)^{-1} \\
&\lesssim \logp(\abs{x}/y) r^{-1/2}e^{-8s}s^{-1}\ell(s)^{-1}
\end{split} \]
and therefore
\[ \bbP( \logp Y_{\sigma(s)}(z)^{-1} > re^{16s}s^{4}\ell(s)^2 ) \lesssim \logp(\abs{x}/y) r^{-1/2}e^{-8s}s^{-1}\ell(s)^{-1} . \]
By the assumption on $\ell$, this expression is summable over $s \in \bbN$ and $z \in H(e^{-4s},M,T)$, and we get with probability $1-O((M\logp M)r^{-1/2})$ that
\begin{equation}\label{eq:k8_Ybound}
\logp Y_{\sigma(s)}(z)^{-1} \le re^{16s}s^{4}\ell(s)^2
\end{equation}
for all $s \in \bbN$, $z \in H(e^{-4s},M,T)$.

Picking $\varphi(t) = (\logp(\frac{1}{t}))^{-1/4+\varepsilon}$ we get from \cref{le:hoelder_bound_8} that with probability $1-O((M\logp M)r^{-1/2})$ we have
\[ \begin{split}
\sup_{t_1,t_2 \in [0,T]}\frac{\abs{\gamma(t_1)-\gamma(t_2)}}{\varphi(\abs{t_1-t_2})}
&\lesssim M^{1/(1+\varepsilon)} \sup_{s \in \bbN} \sup_{z \in H(e^{-4s},M,T)} \frac{e^{-4s/(1+\varepsilon)}}{(re^{16s}s^{4+\varepsilon})^{-1/4+\varepsilon}} \\
&\lesssim M^{1/(1+\varepsilon)} r^{1/4-\varepsilon} .
\end{split} \]
We obtained the following result.

\begin{theorem}
Let $\kappa=8$ and fix $T > 0$. For any $\varepsilon > 0$ there exist $\delta > 0$ and $C < \infty$ (depending on $\varepsilon,T$) such that
\[ \bbP\left( \sup_{t_1,t_2 \in [0,T]} \frac{\abs{\gamma(t_1)-\gamma(t_2)}}{\logp\left(\frac{1}{\abs{t_1-t_2}}\right)^{-1/4+\varepsilon}} > r \right) \le C r^{-2-\delta} . \]
\end{theorem}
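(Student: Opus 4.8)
Everything genuinely non-elementary has already been isolated above — the filling property of the SLE$_8$ trace in \cref{le:ball_filled} and the heavy-tailed behaviour of the critical radial Bessel clock in \cref{pr:bes_clock_critical} — so the plan is simply to feed these into \cref{le:hoelder_bound_8} and then optimise over the two free parameters, the cutoff $M$ on $\norm{\xi}_{\infty;[0,T]}$ and the level $r$ appearing in \eqref{eq:k8_Ybound}. To avoid a notational clash I will write $\rho$ for the level in the statement and retain $r$ for the parameter of \eqref{eq:k8_Ybound}. The only delicate point will be the bookkeeping of exponents at the end.

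First I would record the three inputs already assembled just above the theorem. Fix $\varphi(t) = (\logp(1/t))^{-1/4+\varepsilon}$. On the event $E_M$, intersected with the event that \eqref{eq:k8_Ybound} holds for all $s \in \bbN$ and all grid points $z \in H(e^{-4s},M,T)$, \cref{le:hoelder_bound_8} gives
\[ \sup_{t_1,t_2 \in [0,T]} \frac{\abs{\gamma(t_1)-\gamma(t_2)}}{\varphi(\abs{t_1-t_2})} \le C_0\, M^{1/(1+\varepsilon)}\, r^{1/4-\varepsilon} \]
for a constant $C_0 = C_0(\varepsilon,T)$; meanwhile $\bbP(E_M^c) = o^\infty(1/M)$, and, by \cref{pr:bes_clock_critical} summed over the grid via \cref{le:fw_sum_grid}, the probability that \eqref{eq:k8_Ybound} fails for some $s,z$ is $O\bigl((M\logp M)\, r^{-1/2}\bigr)$.

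Second, given $\rho$ I would pick $r$ so that the displayed bound is at most $\rho$, namely $r = c\,\rho^{1/(1/4-\varepsilon)} M^{-1/((1+\varepsilon)(1/4-\varepsilon))}$ with $c = c(\varepsilon,T)$ small enough. Then the event $\{\sup > \rho\}$ is contained in $E_M^c$ together with the event that \eqref{eq:k8_Ybound} fails somewhere, so
\[ \bbP\Bigl( \sup_{t_1,t_2 \in [0,T]} \frac{\abs{\gamma(t_1)-\gamma(t_2)}}{\varphi(\abs{t_1-t_2})} > \rho \Bigr) \le o^\infty(1/M) + C\, M^{\mu}(\logp M)\, \rho^{-1/(2(1/4-\varepsilon))} , \]
where $\mu = 1 + \tfrac{1}{2(1+\varepsilon)(1/4-\varepsilon)}$. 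The crucial observation is that $\tfrac{1}{2(1/4-\varepsilon)} = \tfrac{2}{1-4\varepsilon} = 2 + \tfrac{8\varepsilon}{1-4\varepsilon} =: 2 + \delta_0$ with $\delta_0 > 0$; this $\delta_0$ is precisely the gain from working with exponent $-1/4+\varepsilon$ rather than the (expected but inaccessible) $-1/4$.

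Third, I would couple $M$ to $\rho$ by setting $M = \rho^{\eta}$ for a small $\eta = \eta(\varepsilon) > 0$. Then $o^\infty(1/M) = o^\infty(\rho^{-\eta})$ decays faster than any power of $\rho$, so it is $\le \rho^{-(2+\delta_0)}$ once $\rho$ is large; and the second term acquires $\rho$-exponent $\eta\mu - (2+\delta_0) + o(1)$, the $o(1)$ absorbing the factor $\logp M = \eta\logp\rho$. Choosing $\eta$ so small that $\eta\mu < \delta_0/2$ makes this at most $-(2+\delta_0/2)$ for all large $\rho$; taking $\delta = \delta_0/2 > 0$ and enlarging $C$ to cover the trivial regime of small $\rho$ then finishes the proof. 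The one place requiring care is exactly this final bookkeeping — checking that the fixed positive slack $\delta_0$ survives both the polynomial-in-$M$ loss $M^{\mu}\logp M$ inherited from \cref{le:fw_sum_grid} and the further loss of the coupling $M = \rho^{\eta}$ — but since $\delta_0$ is a fixed constant and the $M$-dependent loss can be made arbitrarily small by shrinking $\eta$, no obstruction arises, and no SLE-specific input beyond \cref{le:ball_filled} and \cref{pr:bes_clock_critical} is needed.
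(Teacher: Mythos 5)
Your argument is correct and takes essentially the same route as the paper: the paper itself only establishes the bound $\sup_{t_1,t_2}\abs{\gamma(t_1)-\gamma(t_2)}/\varphi(\abs{t_1-t_2}) \lesssim M^{1/(1+\varepsilon)}r^{1/4-\varepsilon}$ on $E_M$ intersected with the event \eqref{eq:k8_Ybound} (whose failure probability is $O((M\logp M)r^{-1/2})$) and leaves the final choice of $M$ and $r$ implicit, and your optimisation ($r \asymp \rho^{1/(1/4-\varepsilon)}M^{-1/((1+\varepsilon)(1/4-\varepsilon))}$, then $M=\rho^{\eta}$ with $\eta$ small) carries this out correctly, producing $\delta=\delta_0/2$ with $\delta_0=8\varepsilon/(1-4\varepsilon)>0$. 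The only point worth adding is the harmless reduction to $\varepsilon<1/4$ (the statement for larger $\varepsilon$ follows from that for smaller $\varepsilon$ since $\logp\ge 1$), so that the exponent $1/4-\varepsilon$ you invert is indeed positive.
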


\section{Regularity of the uniformising maps}
\label{se:unif_map}

In this section we prove \cref{th:main_unif_map}. We follow the same lines as in \cref{sec:regularity_proofs}. It is even a bit simpler since in the case $\kappa \ge 8$ the process $Y_{\sigma(s,z)}(z)^{-1}$ is monotone in $s$. (In fact, this is crucial for the proof; the process $Y_{\sigma(s,z)}(z)^{-1} 1_{\sigma(s,z) < \infty}$ would not be monotone in case $\kappa < 8$.)

Let $\varphi\colon {[0,\infty[} \to {[0,\infty[}$ be a non-decreasing function such that for every $c>1$ there exists $\tilde\Delta_c$ such that $\varphi(cx) \le \tilde\Delta_c \varphi(x)$ for all $x$. Recall the notations $\Upsilon_t$, $\sigma$, and $H(h,M,T)$ introduced in \cref{se:prelim_general}.

\begin{lemma}\label{le:unif_map_bound}
Let $M,T > 0$. There exists $C > 0$ depending on $\varphi,T$ such that if $\norm{\xi}_{\infty;[0,T]} \le M$, then
\[
\frac{(\Im w)\abs{\hat f_t'(w)}}{\varphi(\Im w)}
\le 
C \sup_{s \in \bbN} \sup_{z \in H(e^{-4s},2M+4\sqrt{T},T)} \frac{ e^{-4s} }{\varphi(Y_{\sigma(s,z)}(z))} 
\]
for any $w \in \bbH$ with $\Im w \le 1$ and $t \in [0,T]$.
\end{lemma}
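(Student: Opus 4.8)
The plan is to run the proof of \cref{lem:gvar_bound} essentially verbatim, using \cref{le:fw_grid_all} (the source of the enlarged grid $H(\,\cdot\,,2M+4\sqrt T,T)$) in place of \cref{thm:fw_grid}, and with the sum over a partition replaced by a single supremum. Fix $w=u+iv\in\bbH$ with $v\le 1$ and $t\in[0,T]$, and set $z_*=\hat f_t(w)$. Since $g_t(z_*)=w+\xi(t)$ we have $Z_t(z_*)=w$, hence $Y_t(z_*)=v$ and $\Upsilon_t(z_*)=Y_t(z_*)/\abs{g_t'(z_*)}=v\abs{\hat f_t'(w)}$; thus the left-hand side equals $\Upsilon_t(z_*)/\varphi(v)$, and the task is to bound it, up to a constant depending only on $\varphi$ and $T$, by a single term $e^{-4s}/\varphi(Y_{\sigma(s,z)}(z))$ with $z\in H(e^{-4s},2M+4\sqrt T,T)$.

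First I would dispose of the case $\abs{\hat f_t'(w)}\le\Lambda$ (for a fixed threshold $\Lambda=12e^{8}$, and of the — for bounded $T$ vacuous — case $v\asymp 1$): then $\Upsilon_t(z_*)\le\Lambda v$, and comparing with the grid point $z_0=ie^{-4s}\in H(e^{-4s},2M+4\sqrt T,T)$, where $s\in\bbN$ is chosen with $e^{-4s}\le v<e^{-4(s-1)}$, for which $\sigma(s,z_0)=0$ and $Y_0(z_0)=e^{-4s}$, the right-hand side is already $\gtrsim e^{-4s}/\varphi(e^{-4s})\gtrsim v/\varphi(v)$ by the $\tilde\Delta_c$-property of $\varphi$, which dominates $\Lambda v/\varphi(v)$.

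In the main case $\abs{\hat f_t'(w)}>\Lambda$ I would choose $s_*\in\bbN$ with $e^{-4s_*}\asymp\Upsilon_t(z_*)$ but \emph{a fixed amount of slack built in}, concretely $e^{-4}\Upsilon_t(z_*)\ge e^{-4s_*}>e^{-8}\Upsilon_t(z_*)$ (and a little more slack when $T$ is large, to keep $s_*\ge 1$; this only enlarges the constant). For this $\Lambda$ one has $12<e^{-4s_*}/v<\abs{\hat f_t'(w)}$, so \cref{le:fw_grid_all} (whose proof places the produced point inside the Koebe ball $B(z_*,\tfrac18 v\abs{\hat f_t'(w)})$) yields $z\in H(e^{-4s_*},2\norm{\xi}_{\infty;[0,T]}+4\sqrt T,T)\subseteq H(e^{-4s_*},2M+4\sqrt T,T)$ with $\abs{Z_t(z)-w}\le v/2$, hence $Y_t(z)\in[v/2,3v/2]$, and with $\abs{g_t'(z)}\asymp\abs{g_t'(z_*)}$ by \cref{le:distortion}, hence $\Upsilon_t(z)\asymp\Upsilon_t(z_*)$, explicitly $\Upsilon_t(z)\ge\tfrac{27}{160}\Upsilon_t(z_*)$. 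The key point is that the built-in slack ($e^{-4}$, i.e.\ $1$ on the scale of $s$) exceeds the Koebe loss ($\tfrac14\log\tfrac{160}{27}<1$ on that scale), so the conformal-radius parameter $s':=-\tfrac14\log\Upsilon_t(z)$ of $z$ — which satisfies $\sigma(s',z)=t$, since $\Upsilon_{\cdot}(z)$ is strictly decreasing as long as $z$ is not swallowed, which holds on $[0,t]$ — obeys $s'<s_*$. Therefore $\sigma(s_*,z)\ge\sigma(s',z)=t$, and since $t'\mapsto Y_{t'}(z)$ is non-increasing we get $Y_{\sigma(s_*,z)}(z)\le Y_t(z)\le\tfrac32 v$; this is exactly where the monotonicity of $s\mapsto Y_{\sigma(s,z)}(z)$, valid because $\kappa\ge 8$, enters. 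Finally $\Upsilon_t(z_*)<e^8 e^{-4s_*}$ and $\varphi(Y_{\sigma(s_*,z)}(z))\le\varphi(2v)\le\tilde\Delta_2\varphi(v)$ give
\[
\frac{(\Im w)\abs{\hat f_t'(w)}}{\varphi(\Im w)}=\frac{\Upsilon_t(z_*)}{\varphi(v)}\le e^8\tilde\Delta_2\,\frac{e^{-4s_*}}{\varphi(Y_{\sigma(s_*,z)}(z))},
\]
which is at most $e^8\tilde\Delta_2$ times the right-hand side of the lemma; taking the supremum over $w$ and $t$ concludes, and $C$ does not depend on $M$ since enlarging $M$ only enlarges the right-hand side.

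The hard part, and the only genuine difference from \cref{lem:gvar_bound}, is this scale bookkeeping: the grids $H(e^{-4s},\cdot,\cdot)$ for distinct $s$ do not nest, so the grid at which the point $z$ is produced and the conformal-radius parameter read off in the right-hand side must be the \emph{same} integer $s_*$; the Koebe-distortion losses then force one to pick $s_*$ slightly finer than the naive scale $\asymp\Upsilon_t(z_*)$, so that $\sigma(s_*,z)\ge t$ and the monotonicity of $Y$ along the conformal-radius flow (a feature special to $\kappa\ge 8$) can be invoked. Everything else transcribes from the proof of \cref{lem:gvar_bound}.
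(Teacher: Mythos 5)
Your proof is correct and follows the same route the paper intends: the paper's entire proof of \cref{le:unif_map_bound} is the remark that it goes ``in the exact same way as \cref{lem:gvar_bound}, now using \cref{le:fw_grid_all} instead of \cref{thm:fw_grid}'', which is precisely your grid-plus-Koebe argument combined with the monotonicity of $s\mapsto Y_{\sigma(s,z)}(z)$ available for $\kappa\ge 8$. The extra bookkeeping you supply --- building slack into the choice of $s_*$ so that the Koebe loss does not push the conformal-radius time of the grid point past the integer scale, and disposing of the small-derivative case (where \cref{le:fw_grid_all} requires $r>12$) via the trivial grid point $ie^{-4s}$ --- is exactly the detail the paper's one-line proof leaves implicit, and you handle it correctly.
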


\begin{proof}
This is proved in the exact same way as \cref{lem:gvar_bound}, now using \cref{le:fw_grid_all} instead of \cref{thm:fw_grid}. Concretely, suppose $\Upsilon_t(\hat f_t(w)) \in [e^{-4s},e^{-4(s-1)}]$. Similarly as before (now arguing by \cref{le:fw_grid_all}), we find $z = z(t,w) \in H(e^{-4s},2\norm{\xi}+4\sqrt{T},T)$ with $\Upsilon_t(z) \asymp e^{-4s}$ and $\abs{Z_t(z)-w} \le \Im w/2$. The rest of the argument is the same.
\end{proof}

To prove \cref{th:main_unif_map}, we bound the moments of the right-hand side of \cref{le:unif_map_bound}. We handle the cases $\kappa=8$ and $\kappa>8$ separately.

Suppose $r>1$ is some large number. For $\kappa=8$, by \eqref{eq:k8_Ybound}, we have with probability $1-O((M\logp M)r^{-1/2})$ that
\begin{equation}\label{eq:k8_Ybound_new}
\logp Y_{\sigma(s)}(z)^{-1} \le re^{16s}s^{4}\ell(s)^2
\end{equation}
for all $s \in \bbN$, $z \in H(e^{-4s},2M+4\sqrt{T},T)$. Hence, we can pick the function $\varphi(x) = (\logp(\frac{1}{x}))^{-1/4}(\logp\logp(\frac{1}{x}))(\ell(\logp\logp(\frac{1}{x})))^{1/2}$. On this event, \eqref{eq:k8_Ybound_new} (together with the monotonicity of $\varphi$ near $0$) imply that the right-hand side of \cref{le:unif_map_bound} is bounded by (a constant times) $r^{1/4}$.

Combining this with the fact that $\bbP(\norm{\xi}_{\infty;[0,T]} \ge M) \lesssim \exp(-cM^2)$, this shows the following result.

\begin{theorem}
Let $\kappa = 8$. Let $\ell$ be as in \eqref{eq:ell_def}. There exists a constant $C>0$ (depending on $\ell,T$) such that
\begin{multline*}
\bbP\left( \sup_{\substack{w \in \bbH \\ \Im w \le 1}} \sup_{t \in [0,T]} \frac{\abs{\hat f_t'(w)}}{(\Im w)^{-1}(\logp(\frac{1}{\Im w}))^{-1/4}(\logp\logp(\frac{1}{\Im w}))(\ell(\logp\logp(\frac{1}{\Im w})))^{1/2}} > r \right) \\
\le Cr^{-2}(\logp r)^{1/2}\logp\logp r . 
\end{multline*}
\end{theorem}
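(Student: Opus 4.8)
The plan is to combine the deterministic estimate of \cref{le:unif_map_bound} with the tail bound \eqref{eq:k8_Ybound}, and then to optimise over the truncation level $M$ of the driving function. Concretely, I would fix $M>0$, work on the event $\{\norm{\xi}_{\infty;[0,T]}\le M\}$ (whose complement has probability $\lesssim e^{-cM^2}$), and apply \cref{le:unif_map_bound} with
\[ \varphi(x) = \left(\logp\frac{1}{x}\right)^{-1/4}\left(\logp\logp\frac{1}{x}\right)\left(\ell\!\left(\logp\logp\frac{1}{x}\right)\right)^{1/2}, \]
which is non-decreasing near $0$ and satisfies the doubling property required there. This reduces the left-hand side of the theorem to $C\sup_{s\in\bbN}\sup_{z\in H(e^{-4s},2M+4\sqrt T,T)} e^{-4s}/\varphi(Y_{\sigma(s,z)}(z))$. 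On the event of \eqref{eq:k8_Ybound} — which, for grid width $2M+4\sqrt T\asymp M$, holds with probability $1-O((M\logp M)r^{-1/2})$ — we have $L:=\logp Y_{\sigma(s)}(z)^{-1}\le re^{16s}s^4\ell(s)^2=:A_s$ for all relevant $s,z$.

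The key step is to show that on this good event the above supremum is $\lesssim r^{1/4}$. Since $\varphi(Y_{\sigma(s)}(z)) = L^{-1/4}(\logp L)(\ell(\logp L))^{1/2}$ with $L\in[1,A_s]$, and the map $u\mapsto u^{-1/4}(\logp u)(\ell(\logp u))^{1/2}$ is bounded below by a positive constant on bounded intervals and eventually decreasing (the $u^{-1/4}$ factor beats the slowly growing $\logp u$ and $\ell$), one gets, for $r$ beyond a fixed threshold, $\varphi(Y_{\sigma(s)}(z))\gtrsim A_s^{-1/4}(\logp A_s)(\ell(\logp A_s))^{1/2}$ uniformly in $s,z$. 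Using $A_s^{-1/4}=r^{-1/4}e^{-4s}s^{-1}\ell(s)^{-1/2}$, $\logp A_s\ge s+\logp r$, and $\ell(\logp A_s)\ge\ell(s)$ (monotonicity of $\ell$), this yields
\[ \frac{e^{-4s}}{\varphi(Y_{\sigma(s)}(z))} \lesssim \frac{e^{-4s}A_s^{1/4}}{(\logp A_s)(\ell(\logp A_s))^{1/2}} = \frac{r^{1/4}\,s\,\ell(s)^{1/2}}{(\logp A_s)(\ell(\logp A_s))^{1/2}} \le r^{1/4}. \]
The point is that the $\logp\logp$ and $\ell$ factors built into $\varphi$ are tuned exactly to absorb the $s^4\ell(s)^2$ growth in \eqref{eq:k8_Ybound}, while $\logp\logp\ge 1$ ensures that this extra factor never hurts. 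Hence $\bbP\big(\sup_{w,t}(\Im w)\abs{\hat f_t'(w)}/\varphi(\Im w) > C_1 r^{1/4}\big) \lesssim e^{-cM^2}+(M\logp M)r^{-1/2}$.

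Finally, I would read off the stated tail by substituting $r=(r'/C_1)^4$, so that $r^{-1/2}\asymp (r')^{-2}$, and then choosing $M=M(r')$ with $e^{-cM^2}\asymp(r')^{-2}$, i.e.\ $M\asymp(\logp r')^{1/2}$ and hence $\logp M\asymp\logp\logp r'$; this gives $(M\logp M)(r')^{-2}\asymp (r')^{-2}(\logp r')^{1/2}\logp\logp r'$, which dominates the $(r')^{-2}$ term, and for $r'$ below a fixed constant the asserted bound is $\gtrsim 1$ and holds trivially. I expect the only genuinely delicate point to be the middle paragraph — verifying that this particular $\varphi$ makes the supremum over the grid comparable to $r^{1/4}$, in particular keeping track of where the $\logp\logp$ factor compensates the polynomial-in-$s$ and slowly-varying losses in \eqref{eq:k8_Ybound}; everything else is bookkeeping once \cref{le:unif_map_bound} and \eqref{eq:k8_Ybound} are in hand.
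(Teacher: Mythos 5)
Your proposal is correct and follows essentially the same route as the paper: the deterministic reduction via \cref{le:unif_map_bound}, the tail bound \eqref{eq:k8_Ybound} on the larger grid, the verification that with this choice of $\varphi$ the grid supremum is $\lesssim r^{1/4}$, and finally the Gaussian tail of $\norm{\xi}_{\infty;[0,T]}$ with $M \asymp (\logp r)^{1/2}$. You merely spell out in more detail the two steps the paper leaves implicit (the $r^{1/4}$ computation and the optimisation over $M$), and both are carried out correctly.
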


We turn to the case $\kappa > 8$. Let $\psi(x) = x^p$, and $\varphi(x) = x^{2\alpha} (\logp(\frac{1}{x}))^\beta \ell(\logp(\frac{1}{x}))^{1/p}$. We bound the moments of the right-hand side of \cref{le:unif_map_bound} by
\begin{multline}\label{eq:unif_map_pq}
\sup_{s \in \bbN} \sup_{z \in H(e^{-4s},2M+4\sqrt{T},T)} \left( \frac{ e^{-4s} }{\varphi(Y_{\sigma(s,z)}(z))} \right)^p \\
\lesssim
\sum_{s \in \bbN} \sum_{z \in H(e^{-4s},2M+4\sqrt{T},T)} e^{-4ps} \, Y_{\sigma(s)}^{-2p\alpha} \left(\logp Y_{\sigma(s)}^{-1}\right)^{-p\beta} \ell\left(\logp Y_{\sigma(s)}^{-1}\right)^{-1} .
\end{multline}

This is almost the same expression as \eqref{eq:hoelder_bound_pq}, except that we have fixed times $s$ instead of $S_n$. But the calculation following \eqref{eq:Y_mixed_moments} still carries through, with the only difference that we do not have a deterministic bound on $\hat\theta_s$, but we still have $\bbE^{\nu+a}[(\sin\hat\theta_s)^{-a}] < \infty$ since $\nu \ge 0 > -2$. We can follow the same calculation (we are in Case 1 since $\kappa > 8$) which tells us that the optimal exponents that make the right-hand side finite are (with $a = \frac{16-4\sqrt{\kappa+8}}{\kappa}$, $p = 2-\frac{a^2\kappa}{8}-\frac{a\kappa}{4}+a$)
\begin{alignat*}{2}
\alpha 
&= \frac{1}{p}(-\frac{a^2\kappa}{8}-\frac{a\kappa}{8}+a) 
&&= 1-\frac{\kappa}{24+2\kappa-8\sqrt{8+\kappa}} ,\\
\beta 
&= \frac{1}{p} 
&&= \frac{\kappa}{(12+\kappa)\sqrt{8+\kappa}-4(8+\kappa)} .
\end{alignat*}
Combining this with the fact that $\bbP(\norm{\xi}_{\infty;[0,T]} \ge M) \lesssim \exp(-cM^2)$, this shows the following result.

\begin{theorem}
Let $\kappa > 8$. Let $\alpha = 1-\frac{\kappa}{24+2\kappa-8\sqrt{8+\kappa}}$, $\beta = \frac{\kappa}{(12+\kappa)\sqrt{8+\kappa}-4(8+\kappa)}$, and $\ell$ as in \eqref{eq:ell_def}.
Then
\[ \bbE\left[ \sup_{\substack{w \in \bbH \\ \Im w \le 1}} \sup_{t \in [0,T]} \left( \frac{\abs{\hat f_t'(w)}}{(\Im w)^{2\alpha-1}(\logp(\frac{1}{\Im w}))^\beta \ell(\logp(\frac{1}{\Im w}))^{\beta}} \right)^{p} \right] < \infty \]
for any $p < 1/\beta$.
\end{theorem}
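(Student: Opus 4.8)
The plan is to run the same machinery as for \cref{thm:hoelder}, but starting from the deterministic bound \cref{le:unif_map_bound} in place of \cref{lem:hoelder_bound}. Because the radial times that appear are now the \emph{deterministic} values $s$ rather than the stopping times $S_n$, the argument is actually shorter: there is no sum over $n$, and \cref{lem:brownian_increment} (which was only needed for the two-time term $\hat f_{t_1}(iv)-\hat f_{t_2}(iv)$) plays no role, since \cref{le:unif_map_bound} controls $\abs{\hat f_t'(w)}$ directly. First I would reduce to the event $\{\norm{\xi}_{\infty;[0,T]} \le M\}$ and take a union over dyadic $M$: since $\bbP(\norm{\xi}_{\infty;[0,T]} > M) \lesssim \exp(-cM^2)$ while all bounds below depend on $M$ only polynomially (through \cref{le:fw_sum_grid} and the enlarged constant $2M+4\sqrt T$ in the grid of \cref{le:fw_grid_all}), it suffices to bound the $p$-th moment of the right-hand side of \cref{le:unif_map_bound} on each such event. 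Taking $\varphi(x) = x^{2\alpha}(\logp(1/x))^{\beta}\ell(\logp(1/x))^{\beta}$, so that $\varphi(v)/v = h(v)$, this reduces the theorem to showing
\[
\bbE\left[ \sup_{s \in \bbN}\ \sup_{z \in H(e^{-4s},\,2M+4\sqrt T,\,T)} \left( \frac{e^{-4s}}{\varphi(Y_{\sigma(s,z)}(z))} \right)^{p}\, 1_{\{\norm{\xi}_\infty \le M\}} \right] < \infty .
\]

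Next I would drop the suprema using $(\sup_i a_i)^p \le \sum_i a_i^p$ for nonnegative $a_i$, bounding the left side by
\[
\sum_{s \in \bbN}\ \sum_{z \in H(e^{-4s},\,2M+4\sqrt T,\,T)} e^{-4ps}\, \bbE\left[ Y_{\sigma(s)}^{-2p\alpha}\big(\logp Y_{\sigma(s)}^{-1}\big)^{-p\beta}\,\ell\big(\logp Y_{\sigma(s)}^{-1}\big)^{-p\beta}\right] ,
\]
which is exactly the expression \eqref{eq:unif_map_pq} (equivalently \eqref{eq:hoelder_bound_pq} with $S_n$ replaced by the deterministic radial time $s$). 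Using $\log Y_{\sigma(s)}(z)^{-1} = \log(1/y) + \tfrac2\kappa C_{\kappa(s-s_0)}$ with $C$ the radial Bessel clock of index $\nu = \tfrac12-\tfrac4\kappa$ and $s_0 = -\tfrac14\log y$, I would estimate the inner expectation exactly as after \eqref{eq:Y_mixed_moments}: split according to $\{Y_{\sigma(s)} \le e^{-\delta s}\}$ and $\{Y_{\sigma(s)} \ge e^{-\delta s}\}$ for small $\delta>0$ and apply \cref{prop:bessel_clock_mixed_moments} with $\lambda = \tfrac{4p\alpha}{\kappa}$, $a = -\nu+\sqrt{\nu^2-\tfrac8\kappa p\alpha}$, producing a bound of the shape $s^{-p\beta}\ell(s)^{-p\beta}\exp\big((4p\alpha-\tfrac{a\kappa}2)s\big)\,y^{(-p\alpha-a\kappa/8)\wedge 0}(1+\abs{x}/y)^{-a}$. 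Summing over the grid at scale $e^{-4s}$ via \cref{le:fw_sum_grid} — this is ``Case~1'' of the \cref{thm:hoelder} computation, which is precisely the case that applies when $\kappa>8$ and $a$ is taken optimal — contributes a factor $e^{8s}$, leaving the series $\sum_{s} \exp\big((8-4p-\tfrac{a^2\kappa}2-a\kappa+4a)s\big)s^{-p\beta}\ell(s)^{-p\beta}$; this converges for $p$ in a suitable range below $1/\beta$, and then for all $p < 1/\beta$ by Jensen's inequality, exactly as in \cref{thm:hoelder}. Optimising $\alpha = (-\tfrac{a^2\kappa}8-\tfrac{a\kappa}8+a)/p$ over $a$, with $p = 2-\tfrac{a^2\kappa}8-\tfrac{a\kappa}4+a$, yields $a = \tfrac{16-4\sqrt{\kappa+8}}{\kappa}$ and hence the stated $\alpha$ and $\beta=1/p = \tfrac{\kappa}{(12+\kappa)\sqrt{8+\kappa}-4(8+\kappa)}$.

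The step I expect to require the most care — and the only genuine difference from the proof of \cref{thm:hoelder} — is controlling the starting angle $\hat\theta_s = \cot^{-1}(X_{\sigma(s)}/Y_{\sigma(s)})$. When working with the stopping times $S_n$ one has the deterministic bound $\sin\hat\theta_{S_n} \ge \sin\cot^{-1}(1)$ whenever $S_n<\infty$; here, $s$ being deterministic, no such bound is available, and instead one must show $\bbE^{\nu+a}[(\sin\hat\theta_s)^{-a}] < \infty$ uniformly in $s\ge 1$. This is exactly where the hypothesis $\kappa > 8$ enters: it gives $\nu > 0 > -2$, so the condition in \cref{prop:bessel_clock_mixed_moments} (via \cref{prop:bessel_density}) that makes this moment finite and bounded holds. (The monotonicity of $s \mapsto Y_{\sigma(s,z)}(z)^{-1}$, valid on $\kappa\ge 8$, is in turn what makes \cref{le:unif_map_bound} itself available; for $\kappa < 8$ it would fail.) Collecting the bound obtained on each event $\{\norm{\xi}_{\infty;[0,T]} \le M\}$, which is $\lesssim \operatorname{poly}(M)$, against $\bbP(\norm{\xi}_{\infty;[0,T]} > M) \lesssim e^{-cM^2}$ and summing over dyadic $M$ completes the proof.
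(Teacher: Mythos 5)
Your skeleton is the paper's: reduce to \cref{le:unif_map_bound} on $\{\norm{\xi}_{\infty;[0,T]}\le M\}$, drop the suprema in favour of the sum \eqref{eq:unif_map_pq}, estimate the one-point expectations via the radial Bessel clock and \cref{prop:bessel_clock_mixed_moments}, sum over the grid with \cref{le:fw_sum_grid} (Case 1), and you correctly isolate the one genuinely new point, namely replacing the deterministic bound on $\sin\hat\theta_{S_n}$ by $\bbE^{\nu+a}[(\sin\hat\theta_s)^{-a}]<\infty$, available because $\nu\ge 0$.

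The gap is in the final convergence and moment bookkeeping. With $\alpha$ fixed at its stated value, the exponential rate in your series is $8-4p+4p\alpha-\frac{a\kappa}{2}$ with $a=a(p)=-\nu+\sqrt{\nu^2-\frac{8p\alpha}{\kappa}}$, i.e.\ $E(p)=8-4p+4\alpha p+\frac{\kappa\nu}{2}-\frac{\kappa}{2}\sqrt{\nu^2-\frac{8\alpha p}{\kappa}}$. This function is strictly convex in $p$, and the optimisation over $a$ that produces the stated $\alpha$ is exactly the first-order condition forcing $E'(1/\beta)=0$ while $E(1/\beta)=0$; hence $E(p)>0$ for every admissible $p\neq 1/\beta$. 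So the series you display diverges (exponentially) for every $p<1/\beta$; moreover even its subexponential factor $s^{-p\beta}\ell(s)^{-p\beta}$ is not summable once $p\beta<1$ (e.g.\ $\ell(s)=(\logp s)^{1+\varepsilon}$). The claim that it ``converges for $p$ in a suitable range below $1/\beta$'' is therefore false, and this is not a removable slip: with $\alpha$ fixed, the one-point-estimate-plus-grid-sum strategy closes only at the critical exponent. Relatedly, your sum is not ``exactly'' \eqref{eq:unif_map_pq}: the paper takes $\varphi$ with $\ell(\logp\frac1x)^{1/p}$, so the series carries $\ell(s)^{-1}$; the two agree only at $p=1/\beta$.

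The correct bookkeeping (and the paper's) is: carry out the whole estimate at $p=1/\beta$ with $\varphi(x)=x^{2\alpha}(\logp\frac1x)^{\beta}\ell(\logp\frac1x)^{1/p}$, so the series becomes $\sum_s s^{-1}\ell(s)^{-1}<\infty$ by \eqref{eq:ell_def}, yielding $\bbE\bigl[(\sup\cdots)^{1/\beta}1_{\{\norm{\xi}_\infty\le M\}}\bigr]\lesssim \operatorname{poly}(M)$. The sub-critical moments are then obtained not by Jensen (the unrestricted $1/\beta$-moment is never shown finite, and simply summing $\operatorname{poly}(M)$ over slices diverges) but by H\"older on each slice: for $p<1/\beta$, $\bbE[X^p 1_{\{M-1<\norm{\xi}\le M\}}]\le\bigl(\bbE[X^{1/\beta}1_{\{\norm{\xi}\le M\}}]\bigr)^{p\beta}\,\bbP(\norm{\xi}>M-1)^{1-p\beta}\lesssim \operatorname{poly}(M)\,e^{-c(1-p\beta)M^2}$, which is summable and is precisely where the strict inequality $p<1/\beta$ enters. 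Your closing step ``collect $\operatorname{poly}(M)$ against $e^{-cM^2}$ and sum over dyadic $M$'' needs exactly this higher-moment-on-the-slice H\"older argument; with that correction your proof coincides with the paper's.
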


%---------------------------------------------------
\bibliographystyle{alpha}
%\bibliography{references.bib}

\begin{thebibliography}{GHM20}

\bibitem[AL14]{al-sle8}
Marcelo Alvisio and Gregory~F. Lawler.
\newblock Note on the existence and modulus of continuity of the {$SLE_8$}
  curve.
\newblock {\em Metrika}, 77(1):5--22, 2014.

\bibitem[Bef08]{bef-sle-dimension}
Vincent Beffara.
\newblock The dimension of the {SLE} curves.
\newblock {\em Ann. Probab.}, 36(4):1421--1452, 2008.

\bibitem[BFG22]{bfg-singular-path}
Carlo Bellingeri, Peter~K. Friz, and M\'{a}t\'{e} Gerencs\'{e}r.
\newblock Singular paths spaces and applications.
\newblock {\em Stoch. Anal. Appl.}, 40(6):1126--1149, 2022.

\bibitem[BS09]{bs-aims-sle}
D.~Beliaev and S.~Smirnov.
\newblock Harmonic measure and {SLE}.
\newblock {\em Comm. Math. Phys.}, 290(2):577--595, 2009.

\bibitem[CR09]{cr-le-symmetric-stable}
Zhen-Qing Chen and Steffen Rohde.
\newblock Schramm-{L}oewner equations driven by symmetric stable processes.
\newblock {\em Comm. Math. Phys.}, 285(3):799--824, 2009.

\bibitem[FL15]{fl-sle-transience}
Laurence~S. Field and Gregory~F. Lawler.
\newblock Escape probability and transience for {SLE}.
\newblock {\em Electron. J. Probab.}, 20:no. 10, 14, 2015.

\bibitem[FS17]{fs-le-rp}
Peter~K. Friz and Atul Shekhar.
\newblock On the existence of {SLE} trace: finite energy drivers and
  non-constant {$\kappa $}.
\newblock {\em Probab. Theory Related Fields}, 169(1-2):353--376, 2017.

\bibitem[FT17]{ft-sle-regularity}
Peter~K. Friz and Huy Tran.
\newblock On the regularity of {SLE} trace.
\newblock {\em Forum Math. Sigma}, 5:e19, 17, 2017.

\bibitem[FTY21]{fty-regularity-kappa}
Peter~K. Friz, Huy Tran, and Yizheng Yuan.
\newblock Regularity of {SLE} in {$(t,\kappa)$} and refined {GRR} estimates.
\newblock {\em Probab. Theory Related Fields}, 180(1-2):71--112, 2021.

\bibitem[FV10]{fv-rp-book}
Peter~K. Friz and Nicolas~B. Victoir.
\newblock {\em Multidimensional stochastic processes as rough paths}, volume
  120 of {\em Cambridge Studies in Advanced Mathematics}.
\newblock Cambridge University Press, Cambridge, 2010.
\newblock Theory and applications.

\bibitem[GHM20]{ghm-kpz-sle}
Ewain Gwynne, Nina Holden, and Jason Miller.
\newblock An almost sure {KPZ} relation for {SLE} and {B}rownian motion.
\newblock {\em Ann. Probab.}, 48(2):527--573, 2020.

\bibitem[GMS18]{gms-sle-multifractal}
Ewain Gwynne, Jason Miller, and Xin Sun.
\newblock Almost sure multifractal spectrum of {S}chramm-{L}oewner evolution.
\newblock {\em Duke Math. J.}, 167(6):1099--1237, 2018.

\bibitem[GW08]{gw-alpha-sle}
Qing-Yang Guan and Matthias Winkel.
\newblock S{LE} and {$\alpha$}-{SLE} driven by {L}\'{e}vy processes.
\newblock {\em Ann. Probab.}, 36(4):1221--1266, 2008.

\bibitem[HY22]{hy-sle-regularityx}
Nina Holden and Yizheng Yuan.
\newblock Regularity of the {S}chramm-{L}oewner evolution: {U}p-to-constant
  variation and modulus of continuity.
\newblock {\em ArXiv e-prints}, 2022.

\bibitem[JVL11]{vl-sle-hoelder}
Fredrik Johansson~Viklund and Gregory~F. Lawler.
\newblock Optimal {H}\"older exponent for the {SLE} path.
\newblock {\em Duke Math. J.}, 159(3):351--383, 2011.

\bibitem[Kan07]{kang-sle-boundary}
Nam-Gyu Kang.
\newblock Boundary behavior of {SLE}.
\newblock {\em J. Amer. Math. Soc.}, 20(1):185--210, 2007.

\bibitem[Kem17]{kem-sle-book}
Antti Kemppainen.
\newblock {\em Schramm-{L}oewner evolution}, volume~24 of {\em SpringerBriefs
  in Mathematical Physics}.
\newblock Springer, Cham, 2017.

\bibitem[KMS21]{kms-sle48x}
Konstantinos Kavvadias, Jason Miller, and Lukas Schoug.
\newblock Regularity of the {${\rm SLE}_4$} uniformizing map and the {${\rm
  SLE}_8$} trace.
\newblock {\em ArXiv e-prints}, 2021.

\bibitem[Law05]{law-conformal-book}
Gregory~F. Lawler.
\newblock {\em Conformally invariant processes in the plane}, volume 114 of
  {\em Mathematical Surveys and Monographs}.
\newblock American Mathematical Society, Providence, RI, 2005.

\bibitem[Law19]{law-bessel-notes}
Gregory~F. Lawler.
\newblock Notes on the {B}essel {P}rocess.
\newblock Draft available at
  \url{http://www.math.uchicago.edu/~lawler/bessel18new.pdf}, 2019.

\bibitem[L{\'e}v37]{Lev37}
Paul L{\'e}vy.
\newblock {\em Th{\'e}orie de l'addition des variables al{\'e}atoires},
  volume~1 of {\em Monographies des Probabilit{\'e}s}.
\newblock Gauthier-Villars, Paris, 1937.

\bibitem[Lin05]{lind-le-slit}
Joan~R. Lind.
\newblock A sharp condition for the {L}oewner equation to generate slits.
\newblock {\em Ann. Acad. Sci. Fenn. Math.}, 30(1):143--158, 2005.

\bibitem[LLN09]{lln-hcap}
Steven Lalley, Gregory Lawler, and Hariharan Narayanan.
\newblock Geometric interpretation of half-plane capacity.
\newblock {\em Electron. Commun. Probab.}, 14:566--571, 2009.

\bibitem[LO73]{LO73}
R.~L\'{e}sniewicz and W.~Orlicz.
\newblock On generalized variations. {II}.
\newblock {\em Studia Math.}, 45:71--109, 1973.

\bibitem[LR15]{lr-minkowski-content}
Gregory~F. Lawler and Mohammad~A. Rezaei.
\newblock Minkowski content and natural parameterization for the
  {S}chramm-{L}oewner evolution.
\newblock {\em Ann. Probab.}, 43(3):1082--1120, 2015.

\bibitem[LS11]{ls-natural-parametrisation}
Gregory~F. Lawler and Scott Sheffield.
\newblock A natural parametrization for the {S}chramm-{L}oewner evolution.
\newblock {\em Ann. Probab.}, 39(5):1896--1937, 2011.

\bibitem[LSW04]{lsw-lerw-ust}
Gregory~F. Lawler, Oded Schramm, and Wendelin Werner.
\newblock Conformal invariance of planar loop-erased random walks and uniform
  spanning trees.
\newblock {\em Ann. Probab.}, 32(1B):939--995, 2004.

\bibitem[LW13]{lw-sle-greens}
Gregory~F. Lawler and Brent~M. Werness.
\newblock Multi-point {G}reen's functions for {SLE} and an estimate of
  {B}effara.
\newblock {\em Ann. Probab.}, 41(3A):1513--1555, 2013.

\bibitem[LZ13]{lz-natural-parametrisation}
Gregory~F. Lawler and Wang Zhou.
\newblock {\it {SLE}} curves and natural parametrization.
\newblock {\em Ann. Probab.}, 41(3A):1556--1584, 2013.

\bibitem[MSY22]{msy-le-semimartingalex}
Vlad Margarint, Atul Shekhar, and Yizheng Yuan.
\newblock On {L}oewner chains driven by semimartingales and complex
  {B}essel-type {SDE}s.
\newblock {\em ArXiv e-prints}, 2022.

\bibitem[Pom75]{pom-univalent-book}
Christian Pommerenke.
\newblock {\em Univalent functions}.
\newblock Vandenhoeck \& Ruprecht, G\"{o}ttingen, 1975.
\newblock With a chapter on quadratic differentials by Gerd Jensen, Studia
  Mathematica/Mathematische Lehrb\"{u}cher, Band XXV.

\bibitem[PS]{ps-le-levyx}
Eveliina Peltola and Anne Schreuder.
\newblock Loewner {T}races driven by {L}\'{e}vy processes.
\newblock in preparation.

\bibitem[RS05]{rs-sle}
Steffen Rohde and Oded Schramm.
\newblock Basic properties of {SLE}.
\newblock {\em Ann. of Math. (2)}, 161(2):883--924, 2005.

\bibitem[RTZ18]{rtz-le-slit}
Steffen Rohde, Huy Tran, and Michel Zinsmeister.
\newblock The {L}oewner equation and {L}ipschitz graphs.
\newblock {\em Rev. Mat. Iberoam.}, 34(2):937--948, 2018.

\bibitem[RW14]{rw-hcap}
Steffen Rohde and Carto Wong.
\newblock Half-plane capacity and conformal radius.
\newblock {\em Proc. Amer. Math. Soc.}, 142(3):931--938, 2014.

\bibitem[RY99]{ry-stochastic-book}
Daniel Revuz and Marc Yor.
\newblock {\em Continuous martingales and {B}rownian motion}, volume 293 of
  {\em Grundlehren der Mathematischen Wissenschaften [Fundamental Principles of
  Mathematical Sciences]}.
\newblock Springer-Verlag, Berlin, third edition, 1999.

\bibitem[Sch01]{sch-percolation-formula}
Oded Schramm.
\newblock A percolation formula.
\newblock {\em Electron. Comm. Probab.}, 6:115--120, 2001.

\bibitem[STW19]{stw-finite-var}
Atul Shekhar, Huy Tran, and Yilin Wang.
\newblock Remarks on {L}oewner chains driven by finite variation functions.
\newblock {\em Ann. Acad. Sci. Fenn. Math.}, 44(1):311--327, 2019.

\bibitem[SW05]{sw-sle-coordinate-change}
Oded Schramm and David~B. Wilson.
\newblock S{LE} coordinate changes.
\newblock {\em New York J. Math.}, 11:659--669, 2005.

\bibitem[SZ10]{sz-boundary-proximity}
Oded Schramm and Wang Zhou.
\newblock Boundary proximity of {SLE}.
\newblock {\em Probab. Theory Related Fields}, 146(3-4):435--450, 2010.

\bibitem[Tay72]{tay-bm-variation}
S.~J. Taylor.
\newblock Exact asymptotic estimates of {B}rownian path variation.
\newblock {\em Duke Math. J.}, 39:219--241, 1972.

\bibitem[Wer12]{wer-sle-pvar}
Brent~M. Werness.
\newblock Regularity of {S}chramm-{L}oewner evolutions, annular crossings, and
  rough path theory.
\newblock {\em Electron. J. Probab.}, 17:no. 81, 21, 2012.

\bibitem[Zha16]{zhan-ergodicity}
Dapeng Zhan.
\newblock Ergodicity of the tip of an {SLE} curve.
\newblock {\em Probab. Theory Related Fields}, 164(1-2):333--360, 2016.

\bibitem[Zha19a]{zhan-decomposition}
Dapeng Zhan.
\newblock Decomposition of {S}chramm-{L}oewner evolution along its curve.
\newblock {\em Stochastic Process. Appl.}, 129(1):129--152, 2019.

\bibitem[Zha19b]{zhan-hoelder}
Dapeng Zhan.
\newblock Optimal {H}\"{o}lder continuity and dimension properties for {SLE}
  with {M}inkowski content parametrization.
\newblock {\em Probab. Theory Related Fields}, 175(1-2):447--466, 2019.

\end{thebibliography}

\end{document}